\newtheorem{definition}{Definition}
\newtheorem{assumption}{Assumption}
\newtheorem{theorem}{Theorem}
\newcolumntype{C}[1]{>{\centering\arraybackslash}p{#1}}
\newcommand{\averagel}{\{\!\!\{}
\newcommand{\averager}{\}\!\!\}}
\newcommand{\jumpl}{[\![}
\newcommand{\jumpr}{]\!]}
\newcommand{\jjumpl}{[\![\![}
\newcommand{\jjumpr}{]\!]\!]}
\newcommand{\partition}{\mathcal{T}_h}
\newcommand{\facesinternal}{\mathcal{F}^\mathrm{I}_h}
\newcommand{\faces}{\mathcal{F}_h}
\newcommand{\facesN}{\mathcal{F}_h^N}
\newcommand{\facesNj}{\mathcal{F}_h^{N_j}}
\newcommand{\facesD}{\mathcal{F}_h^D}
\newcommand{\facesDj}{\mathcal{F}_h^{D_j}}
\newcommand{\facesboundary}{\mathcal{F}^\mathrm{B}_h}
\newcommand{\Vh}{\mathrm{\mathbf{V}}_h^\mathrm{DG}}
\newcommand{\Qh}{Q_h^\mathrm{DG}}
\DeclareMathAlphabet{\mathcalligra}{T1}{calligra}{m}{n}
\newtheorem{proposition}{Proposition}
\title{Numerical Modelling of the Brain Poromechanics by High-Order Discontinuous Galerkin Methods\footnote{\textbf{Funding}: PFA has been partially funded by the research grants PRIN2017 n. 201744KLJL. PFA, LD and AMQ has been partially funded by the research grants PRIN2020 n. 20204LN5N5 funded by the Italian Ministry of Universities and Research (MUR). MC, PFA, LD and AMQ are members of INdAM-GNCS.}}
\author[1]{Mattia Corti}
\affil[1]{MOX-Dipartimento di Matematica, Politecnico di Milano, Piazza Leonardo da Vinci 32, Milan, 20133, Italy}
\author[1]{Paola F. Antonietti}
\author[1]{Luca Dede'}
\author[1,2]{Alfio Maria Quarteroni}
\affil[2]{Institute of Mathematics, \'{E}cole Polytechnique F\'{e}d\'{e}rale de Lausanne, Station 8, Av. Piccard, Lausanne, CH-1015, Switzerland (Professor Emeritus)}
\begin{document}
\maketitle

\begin{abstract}
We introduce and analyze a discontinuous Galerkin method for the numerical modelling of the equations of Multiple-Network Poroelastic Theory (MPET) in the dynamic formulation. The MPET model can comprehensively describe functional changes in the brain considering multiple scales of fluids. Concerning the spatial discretization, we employ a high-order discontinuous Galerkin method on polygonal and polyhedral grids and we derive stability and a priori error estimates. The temporal discretization is based on a coupling between a Newmark $\beta$-method for the momentum equation and a $\theta$-method for the pressure equations. After the presentation of some verification numerical tests, we perform a convergence analysis using an agglomerated mesh of a geometry of a brain slice. Finally we present a simulation in a three dimensional patient-specific brain reconstructed from magnetic resonance images. The model presented in this paper can be regarded as a preliminary attempt to model the perfusion in the brain.
\end{abstract}
\section{Introduction}
Poroelasticity models the interaction among fluid flow and elastic deformations in porous media. The precursor Biot's equations \cite{biot41} are able to correctly model the physical problems; however, complete and detailed modelling sometimes requires a splitting of the fluid component into multiple distinct network fields \cite{zienkiewiczMPET1982}. Despite initially the multiple networks poroelastic (MPET) model being applied to soil mechanics \cite{zienkiewiczDynamicBehaviourSaturated1984}, more recently, the separation of fluid networks was proposed in the context of biological flows. Indeed, to model blood perfusion, it is essential to separate the vascular network into its fundamental components (arteries, capillaries and veins). This is relevant both in the heart \cite{BarnafiHeartPerfusion22,barnafi_mathematical_2021}, and the brain \cite{tullyCerebralWaterTransport2011,vardakis_exploring_2020} modelling.
\par
In the context of neurophysiology, where blood constantly perfuses the brain and provides oxygen to neurons, the multiple network porous media models have been used to study circulatory diseases, such as ischaemic stroke \cite{jozsa_porous_2021,jozsa_sensitivity_2021}. The cerebrospinal fluid (CSF) that surrounds the brain parenchyma is related to disorders of the central nervous system (CNS), such as hydrocephalus \cite{chouFullyDynamicMulticompartmental2016,vardakisInvestigatingCerebralOedema2016}, and plays a role in CNS clearance, particularly important in Alzheimer's disease, which is strongly linked to the accumulation of misfolded proteins, such as amyloid beta $(\mathrm{A}\beta)$ \cite{guoMultiplenetworkPoroelasticModel2020,thompson_protein-protein_2020,weickenmeier_physics-based_2019,brennan_role_2022}.
\par
Despite the MPET equations find application in different physical contexts, at the best of our knowledge, a complete analysis of the numerical discretization in the dynamic case is still missing. Concerning the discretization of the quasi-static MPET equations, some works proposed an analysis using both the Mixed Finite Element Method \cite{lee_mixed_2019,piersanti_parameter_2021} and the Hybrid High-Order Method (HHO) \cite{botti_hybrid_2021}. The quasi-static version neglects the second-order derivative of the displacement in the momentum balance equation. The physical meaning of neglecting this term is that inertial forces have small impact on the evolution of the fields. However, this term ought to be considered in the application to brain physiology, because of the strong impact of systolic pressure variations on the vascular and tissue deformation \cite{tullyCerebralWaterTransport2011}. In the context of applications, the fully-dynamic system has been applied to model the aqueductal stenosis effects \cite{chouFullyDynamicMulticompartmental2016}.
\par
From a numerical perspective, the discretization of second-order time-dependent problems is challenging. In this work, the time discretization scheme applies a Newmark-$\beta$ method \cite{Newmarkbetamethod} for the momentum equation. Due to the system structure, a continuity equation for each pressure field requires a temporal discretization method for first-order ODEs. We choose the application of a $\theta-$method in this work.
\par
In terms of accuracy, to guarantee low numerical dispersion and dissipation errors, high-order discretization methods are required, cf. for example \cite{quarteroni:EDP}. In this work for space discretization, we proposed a high-order Discontinuous Galerkin formulation on polygonal/polyhedral grids (PolyDG). The PolyDG methods are naturally oriented to high-order approximations. Another strength of the proposed formulation is its flexibility in mesh generation; due to the applicability to polygonal/polyhedral meshes. Indeed, the geometrical complexity of the brain is one of the challenges that need to be considered. The possibility of refining the mesh only in some regions, handling the hanging nodes and eventually using elements which are not tetrahedral, is easy to implement in our approach. For all these reason, intense research has been undertaken on this topic \cite{antonietti_review_2016,houston:book,houston:paper,cangianiVersionDiscontinuousGalerkin2022,bassi_flexibility_2012}, in particular concerning porous media and elasticity in the context of geophysical applications \cite{antonietti_highorder_2021,antonietti_high-order_2020,antonietti_high-order_2022}. Moreover, PolyDG methods exhibit low numerical dispersion and dissipation errors, as recently shown in \cite{antoniettiHighorderDiscontinuousGalerkin2018} for the elastodynamics equations.
\par
\bigskip
The paper is organized as follows. Section \ref{sec:model} introduces the mathematical model of MPET, proposing also some changes for the adaptation to the brain physiology. In Section \ref{sec:polydg}, we introduce the PolyDG space discretization of the problem. In Section \ref{sec:stability} we prove stability of the semi-discretized MPET system in a suitable (mesh dependent) version. Section \ref{sec:error} is devoted to the proof of a priori error estimates of the semi-discretized MPET problem. In Section \ref{sec:temporaldisc}, we introduce a temporal discretization by means of Newmark-$\beta$ and $\theta$-methods. In Section \ref{sec:numericalresults} we show some numerical results considering convergence tests with analytical solutions. Moreover, we present some realistic simulations in physiological conditions. Finally, in Section 
\ref{sec:conclusion}, we draw some conclusions.

\section{The mathematical model}
\label{sec:model}
\begin{figure}[t]
	\centering
	{\includegraphics[width=0.8\textwidth]{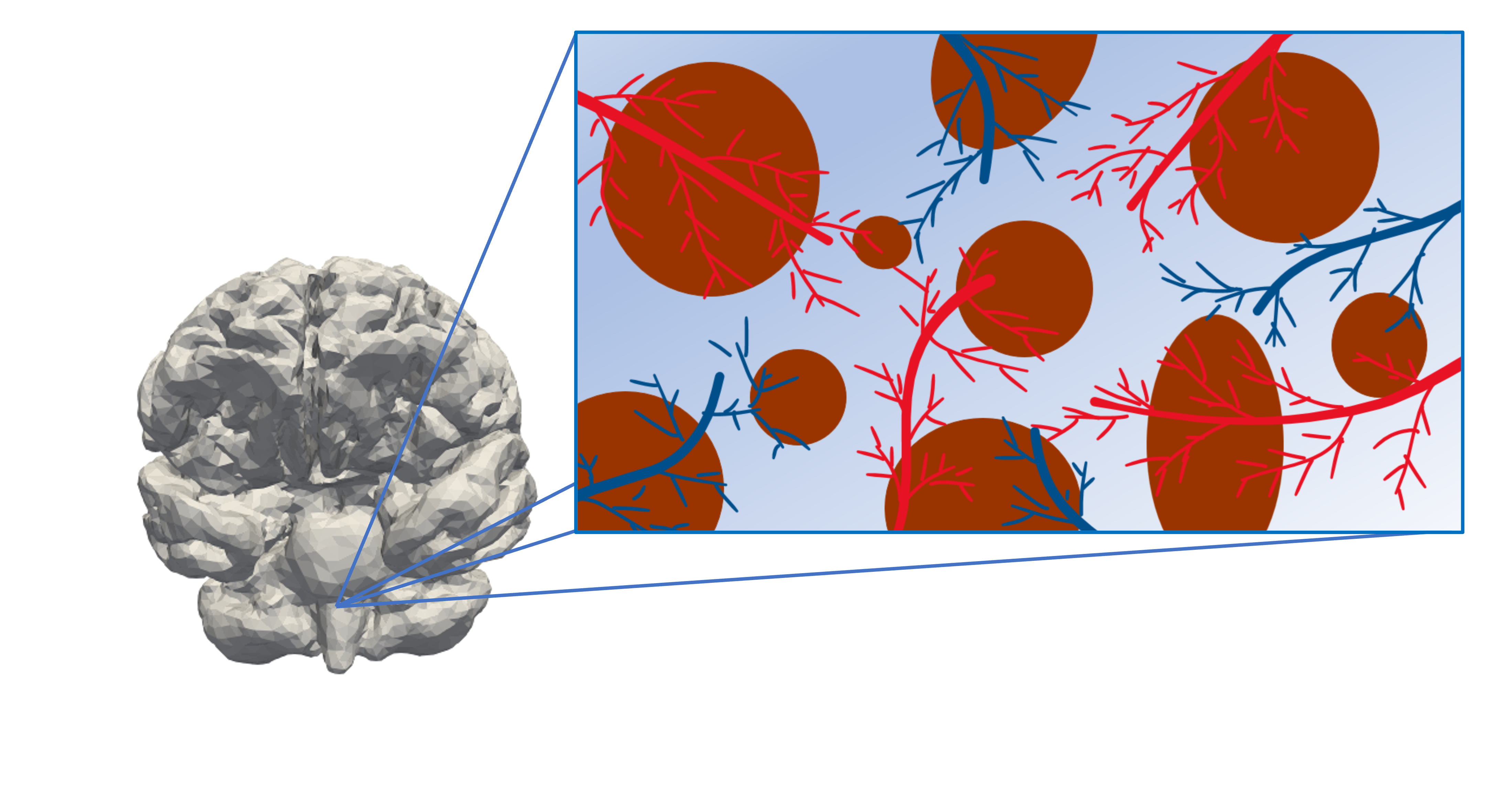}}
	\caption{Example of infinitesimal volume element in which we consider the coexistence of both the solid part (brown) and multiple fluid networks, as we can see in the image: CSF (light-blue), arterial blood (red) and venous blood (blue)}
	\label{fig:infinitesimal}
\end{figure}
\begin{figure}[t]
	\centering
	{\includegraphics[width=\textwidth]{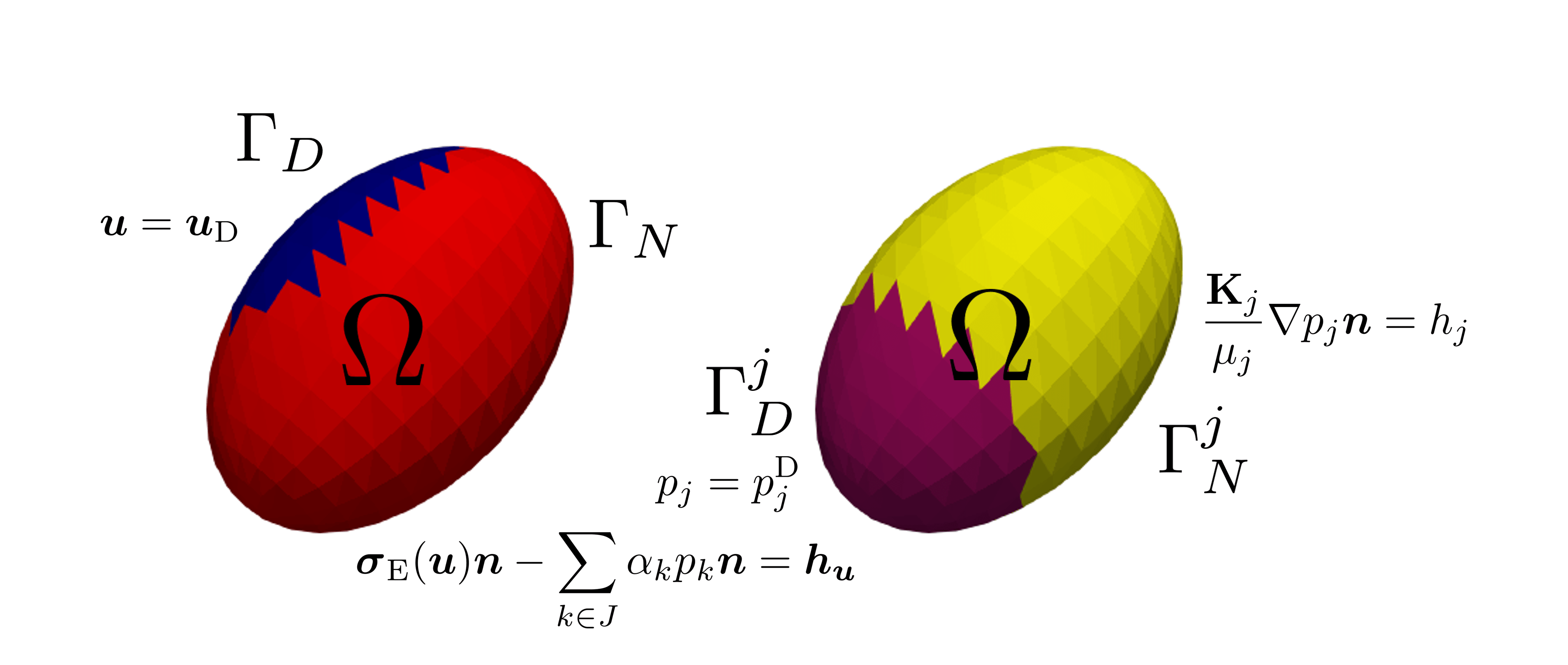}}
	\caption{A domain $\Omega$ with associated boundary conditions for both the displacement $\boldsymbol{u}$ of the tissue and a generic fluid pressure $p_j$ for $j\in J$.}
	\label{fig:ALEdomain}
\end{figure}
In this section, we present the multiple-network poroelasticity system of equations. We consider a given set of labels $J$ such that the magnitude is a number $|J|\in\mathbb{N}$, corresponding to the number of fluid networks. The problem is dependent on time $t\in(0,T]$ and space $\boldsymbol{x}\in\Omega\subset\mathbb{R}^d$ ($d=2,3$). The unknowns of our problem are the displacement $\boldsymbol{u}=\boldsymbol{u}(\boldsymbol{x},t)$ and the network pressures $p_j=p_j(\boldsymbol{x},t)$ for $j\in J$. The problem reads as follows:
\bigskip
\par
Find $\boldsymbol{u}=\boldsymbol{u}(\boldsymbol{x},t)$ and $p_j=p_j(\boldsymbol{x},t)$ such that:
\begin{equation}
\label{eq:initialprob}
\begin{dcases}
    \rho\dfrac{\partial^2\boldsymbol{u}}{\partial t^2}-
    \nabla\cdot\boldsymbol{\sigma}_\mathrm{E}(\boldsymbol{u})+
    \sum_{k\in J} \alpha_k\nabla p_k = \boldsymbol{f},
    & \mathrm{in}\; \Omega\times(0,T],
    \\
    c_j\dfrac{\partial p_j}{\partial t} +
    \nabla\cdot\left(\alpha_j\dfrac{\partial \boldsymbol{u}}{\partial t} -
    \dfrac{\boldsymbol{\mathrm{K}}_j}{\mu_j}\nabla p_j
    %-\rho_j\dfrac{\boldsymbol{\mathrm{K}}_j}{\mu_j}\dfrac{\partial^2\boldsymbol{u}}{\partial t^2}
    \right) +
    \sum_{k \in J} \beta_{jk}(p_j-p_k) + \beta^\mathrm{e}_j p_j=g_j,        
    & \mathrm{in}\; \Omega\times(0,T]\quad\forall j\in J.
\end{dcases}
\end{equation}

In Equation \eqref{eq:initialprob}, we denote the tissue density by $\rho$, the elastic stress tensor by $\boldsymbol{\sigma}_\mathrm{E}$, and the volume force by $\boldsymbol{f}$. Moreover for the $j$-th fluid network, we prescribe a Biot-Willis coefficient $\alpha_j$, a storage coefficient $c_j$, a fluid viscosity $\mu_j$, a permeability tensor $\boldsymbol{\mathrm{K}}_j$, an external coupling coefficient $\beta_j^\mathrm{e}$ and a body force $g_j$. Finally, we have a coupling transfer coefficient $\beta_{jk}$ for each couple of fluid networks $(j,k)\in J\times J$. 
\begin{assumption}[Coefficients' regularity]
In this work, we assume the following regularities for the coefficients and the forcing terms: 
\begin{itemize}
    \item $\rho\in L^\infty(\Omega)$. 
    \item  $\boldsymbol{f}\in L^2((0,T],L^2(\Omega,\mathbb{R}^d))$.
    \item $\alpha_j\in L^\infty(\Omega)$ and $\boldsymbol{\mathrm{K}}_j\in L^\infty(\Omega,\mathbb{R}^{d\times d})$ for any $j\in J$.
    \item $c_j>0$, $\mu_j>0$, and $\beta_j^\mathrm{e}\in L^\infty(\Omega)$ for any $j\in J$.
    \item $g_j\in L^2((0,T],L^2(\Omega))$ for any $j\in J$.
    \item $\beta_{jk}\in L^\infty(\Omega)$ for each couple of fluid networks $(j,k)\in J\times J$.
\end{itemize} 
\end{assumption}
\par
More detailed information about the derivation of this problem can be found in \cite{tullyCerebralWaterTransport2011}. For the purpose of brain poromechanics modelling, we introduce two main modifications to the model:
\begin{itemize}
\item In the derivation we use a static form of the Darcy flow, as in \cite{zienkiewiczDynamicBehaviourSaturated1984}:
\begin{equation}
\boldsymbol{w}_j=-\dfrac{\boldsymbol{\mathrm{K}}_j}{\mu_j}\nabla p_j.
\end{equation}
This is considered a good approximation for the medium-speed phenomena. Indeed, in brain fluid dynamics we do not reach large values of the fluid velocities.
\item We add to the equation a reaction term $\beta^\mathrm{e}_j p_j$ for each fluid network $j$. Indeed, we aim simulating brain perfusion, so we use a diffuse discharge in the venular compartment of the form:
\begin{equation}
\beta^\mathrm{e}_\mathrm{V} (p_\mathrm{V}-\tilde{p}_\mathrm{Veins}),
\end{equation}
considering the large veins pressure $\tilde{p}_\mathrm{Veins}$ and comprising this in the $g_\mathrm{V}$ in the abstract formulation. This mimicks what proposed in the context of heart perfusion \cite{digregorioComputationalModelApplied2021}.
\end{itemize}
It is important to notice that in each infinitesimal volume element we have both the existence of the solid component and of the fluid networks, as represented in Figure \ref{fig:infinitesimal}.
\par
We assume small deformations, that is to consider linear elasticity constitutive relation for the tissue \cite{coussyPoromechanics2004}:
\begin{equation}
\boldsymbol{\sigma}_\mathrm{E}(\boldsymbol{u}) = \mathbb{C}_\mathrm{E}[{\varepsilon}(\boldsymbol{u})] = 2\mu\boldsymbol{\varepsilon}(\boldsymbol{u}) + \lambda(\nabla\cdot\boldsymbol{u}) \boldsymbol{\mathrm{I}},
\end{equation}
where $\mu\in L^\infty(\Omega)$ and $\lambda\in L^\infty(\Omega)$ are the Lamé parameters, $\boldsymbol{\mathrm{I}}$ is the second-order identity tensor, and $\boldsymbol{\varepsilon}(\boldsymbol{u}) = \dfrac{1}{2}(\nabla\boldsymbol{u} + \nabla^\top\boldsymbol{u})$ is the symmetric part of the displacement gradient. Moreover, defined $\mathbb{S}$ as the space of second-order symmetric tensors, $\mathbb{C}_\mathrm{E}:\mathbb{S}\rightarrow\mathbb{S}$ is the fourth order stiffness tensor. This assumption allows us to neglect the differentiation between the actual configuration domain $\Omega_t$ and the reference one $\hat{\Omega}$. For this reason, we consider $\Omega=\hat{\Omega}\simeq\Omega_t$, as in 
Equation \eqref{eq:initialprob}.
\par
We supplement Equation \eqref{eq:initialprob} with suitable boundary and initial conditions. Concerning the initial conditions, due to the second-order time-derivative, we need to impose both a displacement $\boldsymbol{u}_0$ and a velocity $\boldsymbol{v}_0$. Moreover, we need also an initial pressure $p_{j0}$ for each fluid-network $j\in J$. The strong formulation reads:
\begin{equation}
\label{eq:fullprob}
\begin{dcases}
    \rho\dfrac{\partial^2\boldsymbol{u}}{\partial t^2}-
    \nabla\cdot\boldsymbol{\sigma}_\mathrm{E}(\boldsymbol{u})+
    \sum_{k\in J} \alpha_k\nabla p_k = \boldsymbol{f},
    & \mathrm{in}\; \Omega\times(0,T], 
    \\[8pt]
    c_j\dfrac{\partial p_j}{\partial t} +
    \nabla\cdot\left(\alpha_j\dfrac{\partial \boldsymbol{u}}{\partial t} -
    \dfrac{\boldsymbol{\mathrm{K}}_j}{\mu_j}\nabla p_j 
    %-    \rho_j\dfrac{\boldsymbol{\mathrm{K}}_j}{\mu_j}\dfrac{\partial^2\boldsymbol{u}}{\partial t^2}
    \right) +
    \sum_{k \in J} \beta_{jk}(p_j-p_k) + \beta^\mathrm{e}_j p_j=g_j,        
    & \mathrm{in}\; \Omega\times(0,T]\quad \forall j\in J,
    \\[8pt]
    \boldsymbol{\sigma}_\mathrm{E}(\boldsymbol{u})\cdot\boldsymbol{n}-\sum_{k\in J}\alpha_k p_k\boldsymbol{n} = \boldsymbol{h_u}, & \mathrm{on}\;\Gamma_N\times(0,T],
    \\[8pt]
    \dfrac{\boldsymbol{\mathrm{K}}_j}{\mu_j}\nabla p_j \boldsymbol{n} = h_j, & \mathrm{on}\;\Gamma_N^j\times(0,T]\quad \forall j\in J,
    \\[8pt]
    \boldsymbol{u} = \boldsymbol{u}_\mathrm{D}, & \mathrm{on}\;\Gamma_D\times(0,T],
    \\[8pt]
    p_j=p^\mathrm{D}_j, & \mathrm{on}\;\Gamma_D^j\times(0,T]\quad \forall j\in J,
    \\[8pt]
    \boldsymbol{u}(0)=\boldsymbol{u}_0, & \mathrm{in}\;\Omega,
    \\[8pt]
    \dfrac{\partial\boldsymbol{u}}{\partial t}(0)=\boldsymbol{v}_0, & \mathrm{in}\;\Omega,
    \\[8pt]
    p_j(0)=p_{j0}, & \mathrm{in}\;\Omega\quad \forall j\in J.
    \\[8pt]
\end{dcases}
\end{equation}
\subsection{Weak formulation}
In order to introduce a numerical approximation to Equation \eqref{eq:fullprob}, we recall the turn to its variational formulation. Let us consider a subset $\Gamma_D\subset\partial\Omega$ with positive measure $|\Gamma_D|>0$, then we define the Sobolev space $ V=H^1_{\Gamma_D}(\Omega,\mathbb{R}^d)$ such that:
\begin{equation}
	H^1_{\Gamma_D}(\Omega,\mathbb{R}^d) := \{\underline{v}\in H^1(\Omega,\mathbb{R}^d):\quad\underline{v}|_{\Gamma_D}=\underline{0}\}.
\end{equation}
Analogously, for a subset $\Gamma_D^j\subset\partial\Omega$ with positive measure $|\Gamma_D^j|>0$ with $j\in J$, we can define the Sobolev space $Q_j=H^1_{\Gamma_D^j}(\Omega)$ such that:
\begin{equation}
	H^1_{\Gamma_D^j}(\Omega) := \{q_j\in H^1(\Omega):\quad q_j|_{\Gamma_D^j}=0\}.
\end{equation}
Moreover, we employ standard definition of scalar product in $L^2(\Omega)$, denoted by $(\cdot,\cdot)_\Omega$. The induced norm is denoted by $||\cdot||_\Omega$.For vector-valued and tensor-valued functions the definition extends componentwise \cite{salsa:EDP}. 
\par
Finally, given $k\in\mathbb{N}$ and an Hilbert space $H$ we use the notation $C^k([0,T],H)$ to denote the space of functions $\boldsymbol{u} = \boldsymbol{u}(\boldsymbol{x},t)$ such that $\boldsymbol{u}$ is $k$-times continuously differentiable with respect to time and for each $t\in[0,T]$, $\boldsymbol{u}(\cdot,t)\in H$, see e.g. in \cite{salsa:EDP}.
\par
The same equation can be also rewritten in an abstract form using the following definitions:
\begin{itemize}
    \item $a:V\times V\rightarrow \mathbb{R}$ is a bilinear form such that:
    \begin{equation}
        a(\boldsymbol{u},\boldsymbol{v}) =  2\mu\left(\boldsymbol{\varepsilon}(\boldsymbol{u}),\boldsymbol{\varepsilon}(\boldsymbol{v})\right)_\Omega +\lambda\left(\nabla\cdot\boldsymbol{u},\nabla\cdot\boldsymbol{v}\right)_\Omega \qquad \forall\boldsymbol{u}, \boldsymbol{v}\in V
    \end{equation}

    \item $b_j:Q_j\times V\rightarrow \mathbb{R}$ is a bilinear form such that:
    \begin{equation}
        b_j(q_j,\boldsymbol{v}) = \alpha_j \left(q_j,\nabla\cdot\boldsymbol{v} \right)_\Omega \qquad \forall q_j\in Q_j\quad \forall \boldsymbol{v}\in V
    \end{equation}
    
    \item $F:V\rightarrow \mathbb{R}$ is a linear functional such that:
    \begin{equation}
        F(\boldsymbol{v}) = (\boldsymbol{f},\boldsymbol{v})_\Omega + 
     \left(\boldsymbol{h}_{\boldsymbol{u}},\boldsymbol{v}\right)_{\Gamma_N} \qquad \boldsymbol{v}\in V
    \end{equation}
    \item $s_j : Q_j \times Q_j \rightarrow \mathbb{R}$ is a bilinear form such that:
    \begin{equation}
        s_j(p_j,q_j) = \left(\dfrac{\boldsymbol{\mathrm{K}}_j}{\mu_j}\nabla p_j ,\nabla q_j\right)_\Omega \quad \forall p_j,q_j\in Q_j,
    \end{equation}
    %\item $d_j : Q_j \times V \rightarrow \mathbb{R}$ is a bilinear form such that:
    %\begin{equation}
    %    d_j(p_j,\boldsymbol{v}) = - \left(\nabla\cdot\left(\rho_j\dfrac{\boldsymbol{\mathrm{K}}_j}{\mu_j}\boldsymbol{v} \right),q_j\right)_\Omega \quad \forall q_j\in Q_j,\forall \boldsymbol{v} \in V
    %\end{equation}
    \item $C_j: \left(\displaystyle\bigtimes_{k\in J} Q_k\right)\times Q_j\rightarrow \mathbb{R}$ is a bilinear form such that:
    \begin{equation}
        C_j\left((p_k)_{k\in J},q_j\right) = \sum_{k \in J}\left(\beta_{jk}(p_j-p_k),q_j\right)_\Omega + (\beta_{j}^\mathrm{e} p_j,q_j)_\Omega  
\end{equation}
    \item $G_j : Q_j \rightarrow \mathbb{R}$ is a linear functional such that:
    \begin{equation}
        G_j(q_j) = (g_j,q_j)_\Omega + (h_j,q_j)_{\Gamma_N^j} \quad\forall q_j\in Q_j.
    \end{equation}
\end{itemize}

The weak formulation of problem \eqref{eq:fullprob} reads:
\par
\bigskip
Find $\boldsymbol{u}(t)\in V$ and $q_j(t) \in Q_j$ with $j\in J$ such that $\forall t>0$:
\begin{equation}
\label{eq:weakform}
\begin{dcases}
     \rho\left(\dfrac{\partial^2\boldsymbol{u}(t)}{\partial t^2},\boldsymbol{v}\right)_\Omega + a(\boldsymbol{u}(t),\boldsymbol{v}) - \sum_{k \in J} b_k(p_k(t),\boldsymbol{v}) = F(\boldsymbol{v}),
     & \forall\boldsymbol{v}\in V, \\[8pt]
     c_j\left(\dfrac{\partial p_j}{\partial t},q_j\right)_{\Omega} + b_j\left(q_j,
    \dfrac{\partial \boldsymbol{u}}{\partial t}\right)
    %+d_j\left(q_j,    \dfrac{\partial^2\boldsymbol{u}}{\partial t^2}\right) 
    + s_j(p_j,q_j) +   C_j\left((p_k)_{k\in J},q_j\right) = G_j(q_j), & \forall q_j\in Q_j\quad j\in J, \\[8pt]
    \boldsymbol{u}(0)=\boldsymbol{u}_0, & \mathrm{in}\;\Omega, \\[8pt]
    \dfrac{\partial\boldsymbol{u}}{\partial t}(0)=\boldsymbol{v}_0, & \mathrm{in}\;\Omega, \\[8pt]
    p_j(0)=p_{j0}, & \mathrm{in}\;\Omega\quad j\in J,\\[8pt]
    \boldsymbol{u}(t) = \boldsymbol{u}_\mathrm{D}(t), & \mathrm{on}\;\Gamma_D,\\[8pt]
   q_j(t)=q^\mathrm{D}_j(t), & \mathrm{on}\;\Gamma_D^j \quad j\in J.
\end{dcases}
\end{equation}
The complete derivation of this formulation is reported in Appendix \ref{sec:wfder}.
\section{PolyDG semi-discrete formulation}
\label{sec:polydg}
Let us introduce a polytopic mesh partition $\partition$ of the domain $\Omega$ made of polygonal/polyhedral elements $K$ such that:
\begin{equation*}
\forall K_i,K_j \in \partition\qquad |K_i\cap K_j| = 0\quad \mathrm{if} \quad i\neq j \qquad \bigcup_{j} K_j = \Omega
\end{equation*}
where we for each element $K\in \partition$, we denote by $|K|$ the measure of the element and by $h_K$ its diameter. We set $h\in \max_{K\in\partition} h_K$.
\par
Then we can define the interface as the intersection of the $(d-1)-$dimensional facets of two neighbouring elements. We distinguish two cases:
\begin{itemize}
    \item case $d=3$, in which the interface consists in a generic polygon, we further assume that we can decompose each interface into triangles; we denote the set of these triangles with $\faces$;
    \item case $d=2$, in which the interfaces are always line segments; then we denote such a set of segments with $\faces$.
\end{itemize}
It is now useful to subdivide the set into the union of interior faces $\facesinternal$ and $\facesboundary$ exterior faces lying on the boundary of the domain $\partial\Omega$:
\begin{equation*}
    \faces = \facesinternal \cup \facesboundary.
\end{equation*}
Moreover the boundary faces set can be split according to the type of imposed boundary condition of the tissue displacement:
\begin{equation*}
    \facesboundary = \facesD \cup \facesN,
\end{equation*}
where $\facesD$ and $\facesN$ are the boundary faces contained in $\Gamma_D$ and $\Gamma_N$, respectively.  Implicit in this decomposition, there is the assumption that $\partition$ is aligned with $\Gamma_D$ and $\Gamma_N$, i.e. any $F\in\facesboundary$ is contained in either $\Gamma_D$ and $\Gamma_N$. The same splitting can be done according to the type of imposed boundary condition of the generic $j$-th fluid network:
\begin{equation*}
    \facesboundary = \facesDj \cup \facesNj,
\end{equation*}
where $\facesDj$ and $\facesNj$ are the boundary faces contained in $\Gamma_D^j$ and $\Gamma_N^j$, respectively. Implicit in this decomposition, there is the assumption that $\partition$ is aligned with $\Gamma_D^j$ and $\Gamma_N^j$, i.e. any $F\in\facesboundary$ is contained in either $\Gamma_D^j$ and $\Gamma_N^j$. 
\par
Let us define $\mathbb{P}_{s}(K)$ as the space of polynomials of degree $s$ over a mesh element $K$. %Moreover, we define $[\mathbb{P}_{s}(K)]^{d\times d}_\mathrm{sym}$ as the space of symmetric tensors of order $d$ such that the components are polynomials of degree $s$ over a mesh element $K$.
Then we can introduce the following discontinuous finite element spaces:
\begin{equation*}
    \Qh = \{q\in L^2(\Omega):\quad q|_K\in\mathbb{P}_{q}(K)\quad\forall K\in\partition\},
\end{equation*}
\begin{equation*}
    \Vh = \{\boldsymbol{w}\in L^2(\Omega;\mathbb{R}^d):\quad\boldsymbol{w}|_K\in[\mathbb{P}_{p}(K)]^d\quad\forall K\in\partition\},
\end{equation*}
%\begin{equation*}
%        \Wh = \{\boldsymbol{\mathrm{T}}\in L^2(\Omega;\mathbb{R}^{d\times d} ):\quad\boldsymbol{\mathrm{T}}|_K\in[\mathbb{P}_{l}(K)]^{d\times d}_\mathrm{sym} \quad\forall K\in\partition\},
%\end{equation*}
where $p\geq 1$ and $q\geq 1$ are polynomial orders, which can be different in principle. 
\par
Finally, we introduce some assumptions on $\partition$.
\begin{definition}[Polytopic regular mesh]
Let $\partition$ be a mesh, we say it is polytopic regular if:
\begin{equation*}
    \forall K\in\partition\quad \exists \{S_K^F\}_{F\subset\partial K} \;\mathrm{such}\;\mathrm{that} \quad \forall F\subset\partial K \quad \overline{F}=\partial\overline{K}\cap\overline{S^F_K}\;\mathrm{and}\; h_K \lesssim d|S_K^F|\;|F|^{-1},
\end{equation*}
where $\{S_K^F\}_{F\subset\partial K}$ is a set of non-overlapping $d$-dimensional simplices contained in $K$ and $h_K$ is the diameter of the element $K$.
\end{definition}
We remark that the union of simplices $\{S_F\}_{F\subset\partial K}$ does not have to cover, in general, the whole element $K$, that is $\bigcup_\{\bar{S}_F\}_{F\subset\partial K}\subset \bar{K}$.
\begin{assumption}
The mesh sequence $\{\partition\}_h$ satisfies the following properties:
\begin{enumerate}
    \item $\{\partition\}_{h>0}$ is uniformly polytopic-regular
    \item For each $\partition \in \{\partition\}_h$ there exists a shape-regular, simplicial covering $\hat{\partition}$ of $\partition$ such that for each pair $K\in\partition$ and $\hat{K}\in\hat{\partition}$ with $\hat{K}\subset K$ it holds:
    \begin{enumerate}
        \item $h_K\lesssim h_{\hat{K}}$;
        \item $\underset{K\in\partition}{\max}|K'|\lesssim 1$ where $K'\in\partition: K'\cap \hat{K} \neq 0, \hat{K}\in\hat{\partition}, K\subset \hat{K}$.
    \end{enumerate}
    \item A local bounded variation property holds for the local mesh sizes:
    \begin{equation*}
        \forall F\in\faces F\subset\partial K_1\cap\partial K_2 \quad K_1,K_2\in\partition \Rightarrow h_{K_1}\lesssim h_{K_2} \lesssim h_{K_1}
    \end{equation*}
    where the hidden constants are independent of both discretization parameters and number of faces of $K_1$ and $K_2$.
    %\item  Given $\{\partition\}_h$, each element $K\in\partition$ admits a decomposition into shape-regular simplices having size comparable to that of $K$.
\end{enumerate}
\end{assumption}
%\begin{remark}
%The local bounded variation property is assumed to avoid technical issues in the following analysis %\cite{perugiaHpAnalysisLocalDiscontinuous2002}.
%\end{remark}
\par
We next introduce the so-called trace operators \cite{arnoldUnifiedAnalysisDiscontinuous2001}. Let $F\in\facesinternal$ be a face shared by the elements $K^\pm$. Let $\boldsymbol{n}^\pm$ by the unit normal vector on face $F$ pointing exterior to $K^\pm$, respectively. Then, assuming sufficiently regular scalar-valued functions $q$, vector-valued functions $\boldsymbol{v}$ and tensor-values functions $\boldsymbol{\tau}$, we can define:
\begin{itemize}
    \item the average operator $\averagel{\cdot}\averager$ on $F\in \facesinternal$: 
    \begin{equation}
        \averagel{q}\averager = \dfrac{1}{2} (q^+ + q^-), \qquad \averagel{\boldsymbol{v}}\averager = \dfrac{1}{2} (\boldsymbol{v}^+ + \boldsymbol{v}^-), \qquad
        \averagel{\boldsymbol{\tau}}\averager  = \dfrac{1}{2}(\boldsymbol{\tau}^+ + \boldsymbol{\tau}^-),
    \end{equation}
    \item the jump operator $\jumpl{\cdot}\jumpr$ on $F\in \facesinternal$: 
    \begin{equation}
        \jumpl{q}\jumpr = q^+\boldsymbol{n}^+ + q^-\boldsymbol{n}^-,
        \qquad
        \jumpl{\boldsymbol{v}}\jumpr = \boldsymbol{v}^+\cdot\boldsymbol{n}^+ + \boldsymbol{v}^-\cdot\boldsymbol{n}^-,
        \qquad
        \jumpl{\boldsymbol{\tau}}\jumpr  = \boldsymbol{\tau}^+\boldsymbol{n}^+ + \boldsymbol{\tau}^-\boldsymbol{n}^-,
    \end{equation}
    \item the jump operator $\jjumpl\cdot\jjumpr$ on $F\in \facesinternal$ for a vector-valued function:
        \begin{equation}
            \jjumpl\boldsymbol{v}\jjumpr = \dfrac{1}{2}(\boldsymbol{v}^+\otimes\boldsymbol{n}^++\boldsymbol{n}^+\otimes\boldsymbol{v}^+) + \dfrac{1}{2}(\boldsymbol{v}^-\otimes\boldsymbol{n}^-+\boldsymbol{n}^-\otimes\boldsymbol{v}^-),
        \end{equation}
        where the result is a tensor in $\mathbb{R}^{d\times d}_\mathrm{sym}$.
\end{itemize}
In these relations we are using the superscripts $\pm$ on the functions, to denote the traces of the functions on $F$ taken within the of interior to $K^\pm$.
\par
In the same way, we can define analogous operators on the face $F\in\facesboundary$ associated to the cell $K\in\partition$ with $\boldsymbol{n}$ outward unit normal on $\partial\Omega$:
\begin{itemize}
    \item the average operator $\averagel{\cdot}\averager$ on $F\in\facesboundary$: 
    \begin{equation}
        \averagel{q}\averager = q, \qquad \averagel{\boldsymbol{v}}\averager = \boldsymbol{v}, \qquad
        \averagel{\boldsymbol{\tau}}\averager  = \boldsymbol{\tau},
    \end{equation}
    \item the standard jump operator $\jumpl{\cdot}\jumpr$ on $F\in\facesboundary$ which does not belong to a Dirichlet boundary: 
    \begin{equation}
        \jumpl{q}\jumpr = q\boldsymbol{n},
        \qquad
        \jumpl{\boldsymbol{v}}\jumpr = \boldsymbol{v}\cdot\boldsymbol{n},
        \qquad
        \jumpl{\boldsymbol{\tau}}\jumpr  = \boldsymbol{\tau}\boldsymbol{n},
    \end{equation}
    \item the jump operator $\jjumpl{\cdot}\jjumpr$ on $F\in\facesboundary$ which belongs to a Dirichlet boundary, with Dirichlet conditions $g$, $\boldsymbol{g}$ and $\boldsymbol{\gamma}$: 
    \begin{equation}
        \jumpl{q}\jumpr = (q-g)\boldsymbol{n},
        \qquad
        \jumpl{\boldsymbol{v}}\jumpr = (\boldsymbol{v}-\boldsymbol{g})\cdot\boldsymbol{n},
        \qquad
        \jumpl{\boldsymbol{\tau}}\jumpr  = (\boldsymbol{\tau}-\boldsymbol{\gamma})\boldsymbol{n},
    \end{equation}
    \item the jump operator on $F\in\facesboundary$ for a vector-valued function which does not belong to a Dirichlet boundary:
        \begin{equation}
            \jjumpl\boldsymbol{v}\jjumpr = \dfrac{1}{2}(\boldsymbol{v}\otimes\boldsymbol{n}+\boldsymbol{n}\otimes\boldsymbol{v}).
        \end{equation}
    \item the jump operator on $F\in\facesboundary$ for a vector-valued function which belongs to a Dirichlet boundary, with Dirichlet condition $\boldsymbol{g}$:
    \begin{equation}
        \jjumpl\boldsymbol{v}\jjumpr = \dfrac{1}{2}((\boldsymbol{v}-\boldsymbol{g})\otimes\boldsymbol{n}+\boldsymbol{n}\otimes(\boldsymbol{v}-\boldsymbol{g})).
    \end{equation}
\end{itemize}
We recall the following identity will be useful in the method derivation:
\begin{equation}
    \jumpl q \boldsymbol{v} \jumpr = \jumpl \boldsymbol{v} \jumpr \averagel q \averager + \averagel \boldsymbol{v} \averager \cdot \jumpl q \jumpr, \qquad \forall F \in \facesinternal.
\end{equation}
Finally, we remark also the following identities \cite{arnoldInteriorPenaltyFinite1982, antoniettiblanca} for $\boldsymbol{\tau}\in \mathbf{L}^2(\Omega,\mathbb{R}^{d\times d}_\mathrm{sym})$, $\boldsymbol{v}\in \mathbf{H}^1(\Omega,\mathbb{R}^{d})$, and $q\in H^1(\Omega)$:
\begin{equation}
    \sum_{K\in\partition}\int_{\partial K} q\boldsymbol{v}\cdot\boldsymbol{n}_K = \sum_{F\in\faces} \int_F \averagel\boldsymbol{v}\averager\cdot\jumpl q\jumpr
    +  \sum_{F\in\facesinternal} \int_F \averagel q\averager\cdot\jumpl \boldsymbol{v}\jumpr,
\end{equation}
\begin{equation}
    \sum_{K\in\partition}\int_{\partial K} \boldsymbol{v}\cdot(\boldsymbol{\tau}\boldsymbol{n}_K) = 
    \sum_{K\in\partition}\int_{\partial K} \boldsymbol{\tau}:(\boldsymbol{v}\otimes\boldsymbol{n}_K) = 
    \sum_{F\in\faces} \int_F \averagel \boldsymbol{\tau}\averager:\jjumpl \boldsymbol{v}\jjumpr
    +  \sum_{F\in\facesinternal} \int_F \averagel\boldsymbol{v}\averager\cdot\jumpl \boldsymbol{\tau}\jumpr,
\end{equation}
where $\boldsymbol{n}_K$ is the outward normal unit vector to the cell $K$.
\subsection{Semi-discrete formulation}
To construct the semi-discrete formulation, we define the following penalization functions $\eta:\faces\rightarrow\mathbb{R}_+$ and $\zeta_j:\faces\rightarrow\mathbb{R}_+$ for each $j\in J$, which are face-wise defined as:
\begin{equation}
    \eta = \eta_0 \tilde{\mathbb{C}}_\mathrm{E}^K
    \begin{cases}
         \dfrac{p^2}{\{h\}_\mathrm{H}},  & \mathrm{on}\; F\in\facesinternal\\
         \dfrac{p^2}{h},                 & \mathrm{on}\; F\in\facesD
    \end{cases}
    \qquad
    \zeta_j = z_j \dfrac{k_j^K}{\sqrt{\mu_j}}
    \begin{cases}
         \dfrac{q^2}{\{h\}_\mathrm{H}},  & \mathrm{on}\; F\in\facesinternal\\
         \dfrac{q^2}{h},                 & \mathrm{on}\; F\in\facesboundary
    \end{cases},
\end{equation}
where we are considering the harmonic average operator $\{\cdot\}_\mathrm{H}$ on $K^\pm$, $\tilde{\mathbb{C}}_\mathrm{E}^K = \Big|\Big|\sqrt{\mathbb{C}_\mathrm{E}}|_K\Big|\Big|_2^2$ and $k_j^K = ||\sqrt{\mathbf{K}_j|_K}||_2^2$ for any $K\in\partition$\footnote{In this context $||\cdot||_2$ is the operator norm induced by the $L^2$-norm in the space of symmetric second order tensors.} and $\eta_0$ and $z_j$ are parameters at our disposal (to be chosen large enough). The parameters $z_j$ require to be chosen appropriately in particular for small values of $k_j^K$, which are typical in applications. Moreover, we need to define the following bilinear forms:
\begin{itemize}
    \item $\mathcal{A}_\mathrm{E}:\Vh\times \Vh\rightarrow \mathbb{R}$ is a bilinear form such that:
\begin{equation}
    \mathcal{A}_\mathrm{E}(\boldsymbol{u},\boldsymbol{v}) = \int_{\Omega}\boldsymbol{\sigma}_\mathrm{E}(\boldsymbol{u}):\nabla_h\boldsymbol{v}+\sum_{F\in\facesinternal\cup\facesD}\int_{F}\left(\eta \jjumpl\boldsymbol{u}\jjumpr : \jjumpl\boldsymbol{v}\jjumpr-\averagel\boldsymbol{\sigma}_\mathrm{E}(\boldsymbol{u}_h)\averager : \jjumpl\boldsymbol{v}_h\jjumpr -  \jjumpl\boldsymbol{u}_h\jjumpr : \averagel\boldsymbol{\sigma}_\mathrm{E}(\boldsymbol{v}_h)\averager\right)\mathrm{d}\sigma,
\end{equation}
for all $\boldsymbol{u},\boldsymbol{v}\in\Vh$.

    \item $\mathcal{B}_j:\Qh\times \Vh\rightarrow \mathbb{R}$ is a bilinear form for any $j\in J$ such that:
\begin{equation}
    \mathcal{B}_j(p_j,\boldsymbol{v}) = 
    \int_{\Omega} \alpha_j p_{j}(\nabla_h\cdot\boldsymbol{v})
    - \sum_{F\in\facesinternal\cup\facesDj}\int_{F}\alpha_j \averagel p_{jh}\mathrm{\mathbf{I}}\averager:\jjumpl\boldsymbol{v}_h\jjumpr\mathrm{d}\sigma 
    \qquad\forall p_j\in\Qh\;\forall \boldsymbol{v}\in\Vh.
\end{equation}
    
    \item $\mathcal{A}_{\mathrm{P}_j} : \Qh \times \Qh \rightarrow \mathbb{R}$ is a bilinear form such that:
\begin{equation}
    \begin{split}
    \mathcal{A}_{\mathrm{P}_j}(p_j,q_j) = &
    \int_{\Omega} \dfrac{\boldsymbol{\mathrm{K}}_j}{\mu_j}\nabla_h p_j\cdot\nabla_h q_j - \sum_{F\in\facesinternal\cup\facesDj} \int_{F} \dfrac{1}{\mu_j}\averagel\boldsymbol{\mathrm{K}}_j\nabla_h p_j\averager\cdot\jumpl q_j\jumpr  + \\ - & \sum_{F\in\facesinternal\cup\facesDj} \int_{F} \dfrac{1}{\mu_j}\averagel\boldsymbol{\mathrm{K}}_j\nabla_h q_j\averager\cdot\jumpl p_j\jumpr  + \sum_{F\in\facesinternal\cup\facesDj} \int_{F} \zeta_j \jumpl p_j\jumpr\cdot\jumpl q_j\jumpr \qquad p_j,q_j\in\Qh.
    \end{split}
\end{equation}
\end{itemize}
By exploiting the definitions of the bilinear forms, we obtain the following semi-discrete PolyDG formulation.
\par
\bigskip
Find $\boldsymbol{u}_h(t)\in \Vh$ and $p_{jh}(t) \in \Qh$ with $j\in J$ such that $\forall t>0$:
\begin{equation}
\begin{dcases}
     \rho\left(\ddot{\boldsymbol{u}}_h(t),\boldsymbol{v}_h\right)_\Omega + \mathcal{A}_\mathrm{E}(\boldsymbol{u}_h(t),\boldsymbol{v}_h) - \sum_{k \in J} \mathcal{B}_k(p_{kh}(t),\boldsymbol{v}_h) = F(\boldsymbol{v}_h),
     & \forall\boldsymbol{v_h}\in \Vh \\[8pt]
    c_j\left(\dot{p}_{jh}(t),q_{jh}\right)_{\Omega} + \mathcal{B}_j\left(q_{jh},
    \dot{\boldsymbol{u}}_h(t)\right) + %d_j\left(q_j, \ddot{\boldsymbol{u}}_h(t) \right) +
    \mathcal{A}_{\mathrm{P}_j}(p_{jh}(t),q_{jh}) +  C_j\left((p_{kh})_{k\in J},q_{jh}\right)  = G_j(q_{jh}), & \forall q_{jh}\in\Qh  \\[8pt]
    \boldsymbol{u}_h(0)=\boldsymbol{u}_{0h}, & \mathrm{in}\;\Omega_h \\[8pt]
    \dot{\boldsymbol{u}}_h(0)=\boldsymbol{v}_{0h}, & \mathrm{in}\;\Omega_h \\[8pt]
    p_{jh}(0)=p_{j0h}, & \mathrm{in}\;\Omega_h\\[8pt]
    \boldsymbol{u}_h(t) = \boldsymbol{u}^\mathrm{D}_h(t), & \mathrm{on}\;\Gamma_D\\[8pt]
   q_{jh}(t)=q^\mathrm{D}_{jh}(t), & \mathrm{on}\;\Gamma_D^j
\end{dcases}
\label{eq:DGFormulation}
\end{equation}
The complete derivation of this formulation is reported in Appendix \ref{sec:PolyDGder}. Summing up the weak formulations we arrive to the following equivalent equation, we will use in the analysis:
\begin{equation}
\label{eq:summed}
    \begin{split}
        \rho\left(\ddot{\boldsymbol{u}}_h(t),\boldsymbol{v}_h\right)_\Omega + & \mathcal{A}_\mathrm{E}(\boldsymbol{u}_h(t),\boldsymbol{v}_h) + \sum_{k \in J}\Bigg( -\mathcal{B}_k(p_{kh}(t),\boldsymbol{v}_h) + c_k\left(\dot{p}_{kh}(t),q_{kh}\right)_{\Omega} + \mathcal{A}_{\mathrm{P}_k}(p_{kh}(t),q_{kh}) \\ + &  \mathcal{B}_k\left(q_{kh},
    \dot{\boldsymbol{u}}_h(t)\right) %+ d_k\left(q_{kh}, \ddot{\boldsymbol{u}}_h(t) \right) 
    +  C_k\left((p_{jh})_{j\in J},q_{kh}\right)  \Bigg) = F(\boldsymbol{v}_h) + \sum_{k \in J} G_k(q_{kh})
     \quad \forall\boldsymbol{v_h}\in \Vh \;\forall q_{kh}\in\Qh.
    \end{split}
\end{equation}
\section{Stability analysis of the semi-discrete formulation}
\label{sec:stability}
To carry out a complete stability analysis of the problem \eqref{eq:summed}, we introduce the following broken Sobolev spaces for an integer $r\geq1$:
\begin{equation*}
    H^r(\mathcal{T}_h) = \{v_h\in L^2(\Omega): v_h|_K\in H^r(K)\quad \forall K\in\mathcal{T}_h\},
\end{equation*}
\begin{equation*}
        \mathbf{H}^r(\mathcal{T}_h;\mathbb{R}^d) = \{v_h\in L^2(\Omega;\mathbb{R}^d): v_h|_K\in H^r(K;\mathbb{R}^d)\quad \forall K\in\mathcal{T}_h\}.
\end{equation*}
Moreover, we introduce the shorthand notation for the $L^2$-norm $||\cdot||:=||\cdot||_{L^2(\Omega)}$ and for the $L^2$-norm on a set of faces $\mathcal{F}$ as $||\cdot||_\mathcal{F}=\left(\sum_{F\in\mathcal{F}}||\cdot||_{L^2(F)}\right)^{1/2}$.
\par
These norms can be used to define the following DG-norms:
\begin{equation}
    ||p||_{\mathrm{DG,P}_j} = \Big|\Big|\sqrt{\dfrac{\mathbf{K}_j}{\mu_j}}\nabla_h p \Big|\Big| + ||\sqrt{\zeta_j}\jumpl p\jumpr||_{\mathrm{L}^2(\facesinternal\cup\facesDj)} \qquad \forall p\in H^1(\partition)
\end{equation}
\begin{equation}
    ||\boldsymbol{v}||_\mathrm{DG,E} = \Big|\Big|\sqrt{\mathbb{C}_\mathrm{E}}[\boldsymbol{\varepsilon}_h(\boldsymbol{v})] \Big|\Big| + ||\sqrt{\eta}\jjumpl\boldsymbol{v}\jjumpr||_{\mathrm{L}^2(\facesinternal\cup\facesD)}\qquad \forall \boldsymbol{v}\in\mathbf{H}^1(\partition;\mathbb{R}^d)
\end{equation}
For the analysis, we need to prove some continuity and coercivity properties of the bilinear forms.
\begin{proposition}
Let Assumption 2 be satisfied, then the bilinear forms $\mathcal{A}_\mathrm{E}(\cdot,\cdot)$ and $\mathcal{A}_{\mathrm{P}_j}(\cdot,\cdot)$ are continuous:
\begin{equation}
    |\mathcal{A}_\mathrm{E}(\boldsymbol{v}_h,\boldsymbol{w}_h)| \lesssim ||\boldsymbol{v}_h||_\mathrm{DG,E} ||\boldsymbol{w}_h||_\mathrm{DG,E} \qquad \forall \boldsymbol{v}_h,\boldsymbol{w}_h\in \mathbf{V}_h^\mathrm{DG},
\end{equation}
\begin{equation}
    |\mathcal{A}_{\mathrm{P}_j}(p_{jh},q_{jh})| \lesssim ||p_{jh}||_{\mathrm{DG,P}_j}||q_{jh}||_{\mathrm{DG,P}_j} \qquad \forall p_{jh},q_{jh}\in Q_h^\mathrm{DG}\qquad \forall j \in J,
\end{equation}
and coercive:
\begin{equation}
    \mathcal{A}_\mathrm{E}(\boldsymbol{v}_h,\boldsymbol{v}_h) \gtrsim ||\boldsymbol{v}_h||_\mathrm{DG,E}^2 \qquad \forall \boldsymbol{v}_h\in \Vh,
\end{equation}
\begin{equation}
    \mathcal{A}_{\mathrm{P}_j}(p_{jh},p_{jh}) \gtrsim ||p_{jh}||_{\mathrm{DG,P}_j}^2 \qquad \forall p_{jh}\in Q_h^\mathrm{DG}\qquad \forall j \in J,
\end{equation}
provided that the penalty parameters $eta$ and $\zeta_j$ for any $j\in J$ are chosen large enough.
\end{proposition}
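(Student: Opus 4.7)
The plan is to treat the two bilinear forms in parallel, since they share the same interior-penalty structure (volume term + symmetric consistency terms + jump penalty), and to reduce both continuity and coercivity to two basic ingredients: a Cauchy--Schwarz inequality applied term-by-term in the definitions, and a discrete polynomial trace inequality on polytopic elements that allows the consistency terms to be controlled by the volume and penalty terms. The specific scaling of the penalty functions $\eta$ and $\zeta_j$, which contain exactly the factors $\tilde{\mathbb{C}}_\mathrm{E}^K p^2/\{h\}_\mathrm{H}$ and $k_j^K q^2/(\sqrt{\mu_j}\{h\}_\mathrm{H})$, is designed precisely so that these trace bounds come out cleanly.

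For continuity of $\mathcal{A}_\mathrm{E}$, I would split it into the volume contribution, the penalty contribution, and the two symmetric consistency contributions. The volume term $\int_\Omega \boldsymbol{\sigma}_\mathrm{E}(\boldsymbol{u}):\nabla_h\boldsymbol{v}$ equals $\int_\Omega \sqrt{\mathbb{C}_\mathrm{E}}[\boldsymbol{\varepsilon}_h(\boldsymbol{u})]:\sqrt{\mathbb{C}_\mathrm{E}}[\boldsymbol{\varepsilon}_h(\boldsymbol{v})]$ by symmetry of $\boldsymbol{\sigma}_\mathrm{E}$ and $\mathbb{C}_\mathrm{E}$, so Cauchy--Schwarz immediately bounds it by the two energy seminorms appearing in $\|\cdot\|_\mathrm{DG,E}$. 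The penalty term is bounded by $\|\sqrt{\eta}\jjumpl\boldsymbol{u}\jjumpr\|_{L^2} \|\sqrt{\eta}\jjumpl\boldsymbol{v}\jjumpr\|_{L^2}$ directly. For each consistency term I would write
\begin{equation*}
\Bigl|\int_F \averagel{\boldsymbol{\sigma}_\mathrm{E}(\boldsymbol{u}_h)}\averager:\jjumpl\boldsymbol{v}_h\jjumpr\Bigr| \leq \|\eta^{-1/2}\averagel{\boldsymbol{\sigma}_\mathrm{E}(\boldsymbol{u}_h)}\averager\|_{L^2(F)}\,\|\eta^{1/2}\jjumpl\boldsymbol{v}_h\jjumpr\|_{L^2(F)},
\end{equation*}
and then use the polytopic trace inequality $\|\boldsymbol{w}_h\|_{L^2(\partial K)}^2 \lesssim (p^2/h_K)\|\boldsymbol{w}_h\|_{L^2(K)}^2$ (valid under Assumption~2) together with the definition of $\eta$ to obtain $\|\eta^{-1/2}\averagel{\boldsymbol{\sigma}_\mathrm{E}(\boldsymbol{u}_h)}\averager\|_{L^2(F)}^2 \lesssim \eta_0^{-1}\|\sqrt{\mathbb{C}_\mathrm{E}}[\boldsymbol{\varepsilon}_h(\boldsymbol{u}_h)]\|_{L^2(K^\pm)}^2$. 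Summing over faces and invoking the bounded variation property in Assumption~2(3) converts these element-local bounds into the global $\|\cdot\|_\mathrm{DG,E}$ norm. The argument for $\mathcal{A}_{\mathrm{P}_j}$ is identical in structure, replacing $\boldsymbol{\sigma}_\mathrm{E}$ by $(\boldsymbol{\mathrm{K}}_j/\mu_j)\nabla p_j$, the jump operator $\jjumpl\cdot\jjumpr$ by $\jumpl\cdot\jumpr$, and $\eta$ by $\zeta_j$.

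For coercivity of $\mathcal{A}_\mathrm{E}$, testing with $\boldsymbol{v}_h=\boldsymbol{u}_h$ yields $\|\sqrt{\mathbb{C}_\mathrm{E}}[\boldsymbol{\varepsilon}_h(\boldsymbol{u}_h)]\|^2 + \|\sqrt{\eta}\jjumpl\boldsymbol{u}_h\jjumpr\|_{L^2(\facesinternal\cup\facesD)}^2$ from the volume and penalty terms, minus twice the consistency cross term. I would estimate the cross term exactly as in the continuity step and then apply Young's inequality with a small parameter $\epsilon$ to split the product as $\epsilon\|\sqrt{\mathbb{C}_\mathrm{E}}[\boldsymbol{\varepsilon}_h(\boldsymbol{u}_h)]\|^2 + C\epsilon^{-1}\eta_0^{-1}\|\sqrt{\eta}\jjumpl\boldsymbol{u}_h\jjumpr\|_{L^2}^2$; choosing $\epsilon$ small enough and $\eta_0$ correspondingly large enough absorbs this term and leaves a positive multiple of $\|\boldsymbol{u}_h\|_\mathrm{DG,E}^2$. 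The same Young-type argument yields coercivity of $\mathcal{A}_{\mathrm{P}_j}$ once $z_j$ is taken large enough relative to the trace constant and the ratio $k_j^K/\sqrt{\mu_j}$.

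The main obstacle is bookkeeping rather than conceptual: one must ensure that the polytopic trace constant combines correctly with the definitions of $\eta,\zeta_j$ (including the harmonic-averaged mesh size and the coefficient-dependent factors $\tilde{\mathbb{C}}_\mathrm{E}^K$, $k_j^K/\sqrt{\mu_j}$), so that the hidden constants in the final coercivity threshold for $\eta_0$ and $z_j$ are independent of $h$, of $p$, $q$, and of the number of faces per element. The local bounded variation property in Assumption~2(3) is precisely what allows averaging from both sides of a face without losing the scaling, and it is the step I would check with the most care.
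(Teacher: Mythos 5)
Your proposal is correct and is precisely the standard interior-penalty argument (term-by-term Cauchy--Schwarz with the $\eta^{\pm 1/2}$, $\zeta_j^{\pm 1/2}$ weighting, the polytopic discrete trace inequality to control the consistency terms, and Young's inequality with a sufficiently large penalty constant for coercivity) that the paper relies on: the paper does not reprove this proposition but defers to \cite{antoniettiHighorderDiscontinuousGalerkin2018}, where exactly this argument is carried out. Your closing remark about checking that the bounded-variation property lets you pass from the harmonic average $\{h\}_\mathrm{H}$ to either neighbouring $h_{K^\pm}$ without degrading the constants is the right place to be careful, and it is handled the same way in the cited reference.
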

The proof of these properties can be found in \cite{antoniettiHighorderDiscontinuousGalerkin2018}.
\begin{proposition}
Let Assumption 2 be satisfied. % and assume that the polynomial degrees $q$ and $p$ are such that $p-q\leq 1$. Then the generalized inf-sup conditions holds:
%\begin{equation}
%    \underset{\boldsymbol{v}_h\in\Vh,\boldsymbol{v}_h\neq \boldsymbol{0}}{\mathrm{sup}} \dfrac{\mathcal{B}_j(q_{jh},\boldsymbol{v}_h)}{||\boldsymbol{v}_h||_{\mathrm{DG,E}}} + ||\sqrt{\zeta_j}\jumpl q_{jh}\jumpr||_{\mathrm{L}^2(\facesinternal\cup\facesDj)} \gtrsim||q_{jh}|| \quad \forall q_{jh}\in\Qh \quad\forall j\in J,
%\end{equation}
%provided that the stability parameters are large enough for any $j\in J$. Moreover, 
The bilinear form $\mathcal{B}_j$ is also continuous:
\begin{equation}
    |\mathcal{B}_j(q_{jh},\boldsymbol{v}_h)|\lesssim||\boldsymbol{v}_h||_{\mathrm{DG,E}}||q_{jh}|| \quad \forall \boldsymbol{v}_h,\in \Vh \quad \forall q_{jh}\in\Qh 
\end{equation}
\end{proposition}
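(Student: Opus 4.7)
The plan is to split $\mathcal{B}_j(q_{jh},\boldsymbol{v}_h)$ into its volume contribution and its face contribution, and bound each against the product $\|\boldsymbol{v}_h\|_{\mathrm{DG,E}} \|q_{jh}\|$ separately. The essential idea is that the DG,E norm controls both $\|\boldsymbol{\varepsilon}_h(\boldsymbol{v}_h)\|$ (hence $\|\nabla_h\cdot\boldsymbol{v}_h\|$ through the presence of $\lambda$ in $\mathbb{C}_\mathrm{E}$) and the weighted jump $\|\sqrt{\eta}\jjumpl\boldsymbol{v}_h\jjumpr\|_{L^2(\facesinternal\cup\facesD)}$, which are precisely the two quantities dual to what appears in $\mathcal{B}_j$.

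For the volume term, I would write
\begin{equation*}
\left|\int_\Omega \alpha_j q_{jh}\,(\nabla_h\cdot\boldsymbol{v}_h)\right| \le \|\alpha_j\|_{L^\infty(\Omega)}\, \|q_{jh}\|\, \|\nabla_h\cdot\boldsymbol{v}_h\|
\end{equation*}
by Cauchy--Schwarz, and then observe that because $\sqrt{\mathbb{C}_\mathrm{E}}[\boldsymbol{\varepsilon}_h(\boldsymbol{v}_h)]$ encodes a $\sqrt{\lambda}\,(\nabla_h\cdot\boldsymbol{v}_h)\mathbf{I}$ component, $\|\nabla_h\cdot\boldsymbol{v}_h\| \lesssim \|\boldsymbol{v}_h\|_{\mathrm{DG,E}}$ up to a factor depending only on the Lamé coefficients (or, if one prefers to avoid that dependence, simply use $\|\nabla_h\cdot\boldsymbol{v}_h\|\lesssim \|\boldsymbol{\varepsilon}_h(\boldsymbol{v}_h)\|$). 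This yields the volume bound $\lesssim \|q_{jh}\|\,\|\boldsymbol{v}_h\|_{\mathrm{DG,E}}$.

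For the face term, the natural move is to multiply and divide by $\sqrt{\eta}$ so that the $\jjumpl\boldsymbol{v}_h\jjumpr$ contribution matches the DG,E norm:
\begin{equation*}
\left|\sum_{F\in\facesinternal\cup\facesDj}\int_F \alpha_j\,\averagel q_{jh}\mathbf{I}\averager : \jjumpl\boldsymbol{v}_h\jjumpr \,\mathrm{d}\sigma\right|
\le \Bigl\| \tfrac{\alpha_j}{\sqrt{\eta}}\averagel q_{jh}\averager \Bigr\|_{L^2(\facesinternal\cup\facesDj)}
\Bigl\| \sqrt{\eta}\,\jjumpl\boldsymbol{v}_h\jjumpr \Bigr\|_{L^2(\facesinternal\cup\facesD)}.
\end{equation*}
The second factor is bounded by $\|\boldsymbol{v}_h\|_{\mathrm{DG,E}}$ directly. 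For the first factor, I would use the definition $\eta \simeq \eta_0\,\tilde{\mathbb{C}}_\mathrm{E}^K\, p^2/\{h\}_{\mathrm{H}}$, so that $1/\sqrt{\eta} \simeq \sqrt{h}/p$ up to bounded multiplicative constants, and then invoke the standard polytopic discrete trace inequality (available under Assumption~2, e.g.\ as in Cangiani--Dong--Georgoulis--Houston) $\|q_{jh}\|_{L^2(F)} \lesssim (p/\sqrt{h_K})\,\|q_{jh}\|_{L^2(K)}$ applied on each element neighbouring $F$. The $p$ and $\sqrt{h}$ factors cancel, leaving $\|\averagel q_{jh}\averager/\sqrt{\eta}\|_{L^2(\facesinternal\cup\facesDj)} \lesssim \|q_{jh}\|$ after summation over the (finitely-many) elements per face guaranteed by the bounded-variation and covering properties in Assumption~2.

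Combining the two estimates gives the claimed bound. The only real subtlety is the face term: one must be careful that the definition of $\eta$ (with the harmonic mean $\{h\}_\mathrm{H}$ on interior faces) is compatible with the trace inequality on both sides $K^\pm$ of an internal face, so that the averaged trace $\averagel q_{jh}\averager$ can be reabsorbed into the bulk $L^2$ norm with constants independent of $h$ and $p$. This is precisely the scenario that the polytopic-regularity hypothesis and the local bounded-variation property are designed for, so the argument goes through cleanly once those tools are invoked.
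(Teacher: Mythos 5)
Your proof is correct and is essentially the standard argument that the paper delegates to its cited reference (\cite{antoniettiStabilityAnalysisPolytopic2021}): Cauchy--Schwarz on the volume term with $\|\nabla_h\cdot\boldsymbol{v}_h\|\le\sqrt{d}\,\|\boldsymbol{\varepsilon}_h(\boldsymbol{v}_h)\|\lesssim\|\sqrt{\mathbb{C}_\mathrm{E}}[\boldsymbol{\varepsilon}_h(\boldsymbol{v}_h)]\|$, and a weighted Cauchy--Schwarz on the face term followed by the polytopic discrete trace inequality to reabsorb $\eta^{-1/2}\averagel q_{jh}\averager$ into $\|q_{jh}\|$. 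Two minor points are worth flagging. First, $\eta$ scales like $p^2/\{h\}_\mathrm{H}$ with the \emph{displacement} degree $p$, while the trace inequality for $q_{jh}\in\mathbb{P}_q(K)$ produces a factor $q/\sqrt{h_K}$, so the cancellation is exact in $h$ but leaves a ratio $q/p$ of the two polynomial degrees; this is harmless for fixed or comparable degrees but should be stated if one wants a constant uniform in both. Second, the face sum in the definition of $\mathcal{B}_j$ runs over $\facesinternal\cup\facesDj$, whereas the jump term in $\|\cdot\|_\mathrm{DG,E}$ is only over $\facesinternal\cup\facesD$; your Cauchy--Schwarz step silently assumes $\facesDj\subset\facesD$. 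This mismatch is inherited from the paper's definition of $\mathcal{B}_j$ (the boundary contribution there arises from the displacement test function, so the natural face set is $\facesD$), but you should make the assumption explicit. Also note that when you sum the trace inequality over faces you should use it in the aggregated form $\|q_{jh}\|^2_{L^2(\partial K)}\lesssim (q^2/h_K)\|q_{jh}\|^2_{L^2(K)}$, since polytopic elements may have an unbounded number of faces; this is exactly what the polytopic-regularity hypothesis provides.
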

The proof of these properties can be found in \cite{antoniettiStabilityAnalysisPolytopic2021}. 
%\begin{remark}
%We remark that in our case, the stabilization with the jump-jump term does not need to be separately added in the pressure equations. Indeed it is already present in the bilinear form $\mathcal{A}_{\mathrm{P}_k}$ to stabilize the laplacian problem for the pressure. However, we have to be aware to choose a value of $\eta_k$ large enough to stabilise both the terms.
%\end{remark}
\begin{proposition}
Let Assumption 2 be satisfied, then:
\begin{equation}
    \left|\sum_{j \in J} C_j\left((p_{kh})_{k\in J},q_{jh}\right)\right|\lesssim \sum_{k \in J}\sum_{j \in J} ||p_{kh}||\; ||q_{jh}|| \quad \forall p_{kh},q_{kh}\in\Qh,
\end{equation}
\begin{equation}
    \sum_{j \in J} C_j\left((p_{kh})_{k\in J},p_{jh}\right))\gtrsim \sum_{j \in J} ||\sqrt{\beta_j^\mathrm{e}}p_{jh}||^2 \quad \forall p_{jh}\in\Qh.
\end{equation}
\end{proposition}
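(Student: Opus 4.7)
The plan is to expand the bilinear form $C_j$ according to its definition, separating the inter-network coupling terms from the external reaction term, and then estimate the two pieces separately. Concretely, one has
\begin{equation*}
\sum_{j\in J} C_j\bigl((p_{kh})_{k\in J},q_{jh}\bigr) = \sum_{j\in J}\sum_{k\in J}\bigl(\beta_{jk}(p_{jh}-p_{kh}),q_{jh}\bigr)_\Omega + \sum_{j\in J}\bigl(\beta_j^\mathrm{e} p_{jh},q_{jh}\bigr)_\Omega,
\end{equation*}
so the upper and lower bounds decouple into estimates on the two right-hand-side contributions.

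For the continuity inequality, I would bound each integral on the right-hand side by Cauchy--Schwarz after pulling out the $L^\infty$-norms of $\beta_{jk}$ and $\beta_j^\mathrm{e}$ provided by Assumption 1. This yields $|(\beta_{jk}(p_{jh}-p_{kh}),q_{jh})_\Omega|\lesssim (||p_{jh}||+||p_{kh}||)\,||q_{jh}||$ and an analogous estimate for the reaction term. Summing over the finite index set $j,k\in J$ (so that $|J|$ and the maximal $L^\infty$-norms of the coefficients can be absorbed into the hidden constant) produces the announced upper bound.

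For the lower bound with $q_{jh}=p_{jh}$, the key observation is the physical symmetry of the inter-compartmental exchange, $\beta_{jk}=\beta_{kj}$, together with non-negativity $\beta_{jk}\geq 0$. Using the standard swap-and-average trick (writing the double sum once as it is and once with $(j,k)$ interchanged, then averaging),
\begin{equation*}
\sum_{j\in J}\sum_{k\in J}\bigl(\beta_{jk}(p_{jh}-p_{kh}),p_{jh}\bigr)_\Omega = \tfrac{1}{2}\sum_{j,k\in J}\bigl(\beta_{jk}(p_{jh}-p_{kh})^2,1\bigr)_\Omega \geq 0,
\end{equation*}
so the coupling contribution is non-negative and can be discarded. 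What remains is exactly $\sum_{j\in J}(\beta_j^\mathrm{e} p_{jh},p_{jh})_\Omega = \sum_{j\in J}||\sqrt{\beta_j^\mathrm{e}}\,p_{jh}||^2$, which is the desired lower bound.

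The main obstacle is that the symmetrization step just described requires the hypotheses $\beta_{jk}=\beta_{kj}$ and $\beta_{jk}\geq 0$ a.e.\ in $\Omega$; these are natural physical properties of MPET mass-exchange rates but are not explicitly recorded in Assumption 1, so I would either add them as supplementary hypotheses or note them as standing modelling assumptions. If only non-negativity (and not symmetry) is available, the asymmetric remainder $(\beta_{jk}-\beta_{kj})p_{kh}p_{jh}$ can still be controlled via Young's inequality and absorbed into the reaction contribution, provided $\beta_j^\mathrm{e}$ is large enough relative to the off-diagonal coefficients; the clean bound stated in the proposition, however, is what one obtains under the symmetric/non-negative setting.
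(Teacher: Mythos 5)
Your proof is correct and follows essentially the same route as the paper: the continuity bound is identical (Cauchy--Schwarz after extracting the $L^\infty$ bounds on the coefficients), and for coercivity the paper establishes the non-negativity of the exchange contribution via Cauchy--Schwarz plus Young's inequality, which is your symmetrization identity carried out in two steps and likewise relies implicitly on $\beta_{jk}\geq 0$ and $\beta_{jk}=\beta_{kj}$. Your remark that these hypotheses are not recorded in Assumption~1 is accurate --- the paper uses them tacitly, e.g.\ in writing $\sqrt{\beta_{jk}}$ and in the bound $|(\beta_{jk}p_{jh},p_{kh})_\Omega|\leq ||\sqrt{\beta_{jk}}p_{jh}||\,||\sqrt{\beta_{kj}}p_{kh}||$.
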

\begin{proof}
First of all, to simplify the computations let us introduce the following quantity:
\begin{equation}
\label{eq:linfty}
    \mathbb{B}=\max\left\{\underset{j,k\in J}{\max}\left\{||\beta_{jk}||_{L^\infty(\Omega)}\right\},\underset{j\in J}{\max}\left\{||\beta_{j}^\mathrm{e}||_{L^\infty(\Omega)}\right\}\right\}
\end{equation}
The proof of the continuity trivially derives from the application of triangular inequality and H\"older inequality, using relation \eqref{eq:linfty}:
\begin{equation*}
\begin{split}
    \left|\sum_{j \in J} C_j\left((p_{kh})_{k\in J},q_j\right)\right| \leq & \sum_{k \in J}\sum_{j \in J}|(\beta_{kj}p_{kh},q_{kh})_\Omega|+\sum_{k \in J}\sum_{j \in J}|(\beta_{kj}p_{jh},q_{kh})_\Omega| + \sum_{j \in J}|(\beta_{j}^\mathrm{e} p_{jh},q_{jh})_\Omega| \leq \\
    \leq & \sum_{k \in J}\sum_{j \in J}\left(2\mathbb{B}||p_{kh}||\,||q_{kh}||+\mathbb{B}||p_{jh}||\,||q_{kh}|| \right)\lesssim  
    \sum_{k \in J}\sum_{j \in J}||p_{jh}||\,||q_{kh}||
\end{split}
\end{equation*}
In the last step, we are observing that in the second sum we are also controlling the case $j=k$.
\par
To prove the coercivity, we introduce the definition of $\tilde{\beta}_j = \sum_{k \in J}\beta_{kj}+\beta_j^\mathrm{e} = \sum_{k \in J}\beta_{jk}+\beta_j^\mathrm{e}>0$. Then we proceed as:
\begin{equation*}
\begin{split}
       \sum_{j \in J} C_j&\left((p_{kh})_{k\in J},p_{jh}\right)) =  \sum_{j \in J}\sum_{k \in J}(\beta_{jk}(p_{jh}-p_{kh}),p_{jh})_\Omega + \sum_{j \in J} (\beta_j^\mathrm{e}p_{jh},p_{jh})_\Omega = \\ 
       = &  \sum_{j \in J}\sum_{k \in J}||\sqrt{\beta_{jk}}\;p_{jh}||^2 + \sum_{j \in J}||\sqrt{\beta_j^\mathrm{e}}\;p_{jh}||^2 - \sum_{j \in J}\sum_{k \in J}(\beta_{jk}p_{kh},p_{jh})_\Omega \geq \\ 
       \geq &    \sum_{j \in J}\sum_{k \in J}||\sqrt{\beta_{jk}}\;p_{jh}||^2 + \sum_{j \in J}||\sqrt{\beta_j^\mathrm{e}}\;p_{jh}||^2 - \sum_{j \in J}\sum_{k \in J}|(\beta_{jk}p_{jh},p_{kh})_\Omega| \geq \qquad\quad\mathrm{H\ddot{o}lder\,inequality} \\
        \geq &    \sum_{j \in J}\sum_{k \in J}||\sqrt{\beta_{jk}}\;p_{jh}||^2 + \sum_{j \in J}||\sqrt{\beta_j^\mathrm{e}}\;p_{jh}||^2 - \sum_{j \in J}\sum_{k \in J}||\sqrt{\beta_{jk}}p_{jh}||\,||\sqrt{\beta_{kj}}p_{kh}|| \geq \;\mathrm{Young\,inequality}\\
        \geq &    \sum_{j \in J}\sum_{k \in J}||\sqrt{\beta_{jk}}\;p_{jh}||^2 + \sum_{j \in J}||\sqrt{\beta_j^\mathrm{e}}\;p_{jh}||^2 - \dfrac{1}{2}\sum_{k \in J}\sum_{j \in J}||\sqrt{\beta_{jk}}p_{jh}||^2- \dfrac{1}{2}\sum_{k \in J}\sum_{j \in J}||\sqrt{\beta_{kj}}p_{kh}||^2 \geq \\
        \geq &  \sum_{j \in J}||\sqrt{\beta_j^\mathrm{e}}\;p_{jh}||^2,
\end{split}
\end{equation*}
and the thesis follows.
\end{proof}

\subsection{Stability estimate} 
For the sake of simplicity, we assume homogeneous boundary conditions, both on Neumann and Dirichlet boundaries, i.e. $\boldsymbol{u}_\mathrm{D}=\boldsymbol{0}$, $\boldsymbol{h}_\mathrm{u}=\boldsymbol{0}$, $h_j = 0$ and $p_j^\mathrm{D}=0$ for any $j\in J$. 
\begin{definition}
Let us define the following energy norm:
\begin{equation}
\begin{split}
    || (\boldsymbol{u}_h,&(p_{kh})_{k\in J})(t)||_\varepsilon^2 = \\ = &
 ||\sqrt{\rho}\dot{\boldsymbol{u}}_h(t)||^2 + ||\boldsymbol{u}_h(t)||_\mathrm{DG,E}^2  + \sum_{k \in J}\left( ||\sqrt{c_k} p_{kh}(t)||^2 + \int_0^t \left(||p_{kh}(s)||_{\mathrm{DG,P}_k}^2 + ||\sqrt{\beta_k^\mathrm{e}}\;p_{kh}(s)||^2\right) \mathrm{d}s \right)
\end{split}
\end{equation}
\end{definition}
\begin{theorem}[Stability estimate]
Let Assumptions 1 and 2 be satisfied and let $\left(\boldsymbol{u}_h,(p_{kh})_{k\in J}\right)$ be the solution of Equation \eqref{eq:summed} for any $t\in(0,\hat{t}]$. Let the stability parameters be large enough for any $k\in J$. then, it holds:
\begin{equation}
||(\boldsymbol{u}_h, (p_{kh})_{k\in J})(\hat{t})||_\varepsilon
        \lesssim \vartheta_0 + \int_0^{\hat{t}} \left(\dfrac{1}{\sqrt{\rho}}||\boldsymbol{f}(t)|| + \sum_{k \in J} \dfrac{1}{\sqrt{c_k}}||g_k(t)|| \right) \mathrm{d}t,  
\end{equation}
where we use the following definition:
\begin{equation}
\label{eq:incond}
    \vartheta_0^2 := ||\sqrt{\rho}\dot{\boldsymbol{u}}_h^0||^2 + ||\boldsymbol{u}_h^0||_\mathrm{DG,E}^2 + \sum_{k \in J} ||\sqrt{c_k} p_{kh}^0||^2 
\end{equation}
\end{theorem}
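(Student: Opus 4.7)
The proof follows the classical energy method for coupled hyperbolic-parabolic systems. The plan is to test the summed semi-discrete formulation \eqref{eq:summed} with $\boldsymbol{v}_h = \dot{\boldsymbol{u}}_h(t)$ and $q_{kh} = p_{kh}(t)$. With this choice the two pressure--displacement coupling terms $-\mathcal{B}_k(p_{kh},\dot{\boldsymbol{u}}_h)$ and $+\mathcal{B}_k(p_{kh},\dot{\boldsymbol{u}}_h)$ appearing in the sum over $k \in J$ cancel exactly. The remaining contributions on the left-hand side split naturally into two groups: total time-derivatives, namely $\rho(\ddot{\boldsymbol{u}}_h,\dot{\boldsymbol{u}}_h)_\Omega = \tfrac{1}{2}\tfrac{d}{dt}\|\sqrt{\rho}\,\dot{\boldsymbol{u}}_h\|^2$, $c_k(\dot{p}_{kh},p_{kh})_\Omega = \tfrac{1}{2}\tfrac{d}{dt}\|\sqrt{c_k}\,p_{kh}\|^2$, and---using the manifest symmetry of the elastic bilinear form---$\mathcal{A}_\mathrm{E}(\boldsymbol{u}_h,\dot{\boldsymbol{u}}_h) = \tfrac{1}{2}\tfrac{d}{dt}\mathcal{A}_\mathrm{E}(\boldsymbol{u}_h,\boldsymbol{u}_h)$; and dissipative contributions $\mathcal{A}_{\mathrm{P}_k}(p_{kh},p_{kh})$ together with $\sum_{k\in J} C_k((p_{jh})_{j\in J},p_{kh})$.

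Next I would integrate over $(0,\hat{t})$. Coercivity of $\mathcal{A}_\mathrm{E}$ and $\mathcal{A}_{\mathrm{P}_k}$ from Proposition~1 (valid since the penalty parameters are chosen large enough) bounds $\mathcal{A}_\mathrm{E}(\boldsymbol{u}_h(\hat{t}),\boldsymbol{u}_h(\hat{t}))$ and $\int_0^{\hat{t}}\mathcal{A}_{\mathrm{P}_k}(p_{kh},p_{kh})\,dt$ from below by $\|\boldsymbol{u}_h(\hat{t})\|^2_{\mathrm{DG,E}}$ and $\int_0^{\hat{t}}\|p_{kh}\|^2_{\mathrm{DG,P}_k}\,dt$, respectively, while Proposition~3 controls the integrated reaction term from below by $\int_0^{\hat{t}}\sum_k \|\sqrt{\beta_k^\mathrm{e}}\,p_{kh}\|^2\,dt$. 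The initial elastic contribution $\tfrac{1}{2}\mathcal{A}_\mathrm{E}(\boldsymbol{u}_h^0,\boldsymbol{u}_h^0)$ appearing on the right-hand side after integration is absorbed into $\vartheta_0^2$ via the continuity part of Proposition~1. For the forcing terms, Cauchy--Schwarz on $F(\dot{\boldsymbol{u}}_h) = (\boldsymbol{f},\dot{\boldsymbol{u}}_h)_\Omega$ and $G_k(p_{kh}) = (g_k,p_{kh})_\Omega$, combined with the positivity of $\rho$ and $c_k$, gives
\begin{equation*}
\int_0^{\hat{t}} F(\dot{\boldsymbol{u}}_h(t))\,dt + \sum_{k\in J}\int_0^{\hat{t}} G_k(p_{kh}(t))\,dt \lesssim \int_0^{\hat{t}}\phi(t)\, E(t)\,dt,
\end{equation*}
with $E(t) := \|(\boldsymbol{u}_h,(p_{kh})_{k\in J})(t)\|_\varepsilon$ and $\phi(t) := \tfrac{1}{\sqrt{\rho}}\|\boldsymbol{f}(t)\| + \sum_{k\in J}\tfrac{1}{\sqrt{c_k}}\|g_k(t)\|$, since each non-$\phi$ factor is controlled by $E(t)$. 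Collecting yields an integral inequality of the form $E(\hat{t})^2 \lesssim \vartheta_0^2 + \int_0^{\hat{t}}\phi(t)\,E(t)\,dt$.

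The final step, which I expect to be the main technical subtlety, is to close this integral inequality into the desired linear bound on $E(\hat{t})$. The route is the classical square-root Grönwall argument: set $\Phi(t)$ equal to the right-hand side above (adding a small $\varepsilon>0$ for strict positivity if needed). Then $\Phi$ is non-decreasing, $E(t)^2 \leq \Phi(t)$ for every $t\leq\hat{t}$, and $\Phi'(t) \leq C\phi(t)\sqrt{\Phi(t)}$, which rearranges to $\tfrac{d}{dt}\sqrt{\Phi(t)} \leq \tfrac{C}{2}\phi(t)$. Integrating over $(0,\hat{t})$ and using $\sqrt{\Phi(0)} \lesssim \vartheta_0$ delivers $E(\hat{t}) \leq \sqrt{\Phi(\hat{t})} \lesssim \vartheta_0 + \int_0^{\hat{t}}\phi(t)\,dt$, which is exactly the claimed estimate. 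The obstacle here is structural rather than computational: a naive application of Grönwall to $E^2$ would produce an exponential factor, whereas the sharp linear-in-data bound stated in the theorem requires this square-root trick, which exploits the fact that $E(t)$ (and not $E(t)^2$) appears on the right-hand side of the energy inequality.
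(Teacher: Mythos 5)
Your proposal follows essentially the same route as the paper's proof: testing with $\dot{\boldsymbol{u}}_h$ and $p_{kh}$, exploiting the cancellation of the $\mathcal{B}_k$ terms, integrating in time, invoking coercivity/continuity of the bilinear forms, and closing the resulting inequality $E(\hat t)^2 \lesssim \vartheta_0^2 + \int_0^{\hat t}\phi\,E$ via the nonlinear (square-root) Gr\"onwall lemma. The only difference is cosmetic: you spell out the square-root Gr\"onwall argument explicitly, whereas the paper delegates it to a cited reference; your identification of this as the step that yields the linear-in-data bound is correct.
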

\begin{proof}
%Moreover, we assume that:
%\begin{equation}
%    \nabla\cdot\left(\dfrac{\partial^2\boldsymbol{u}}{\partial t^2}\right) = 0\quad \Rightarrow \quad d_k\left(q_{kh}, \ddot{\boldsymbol{u}}_h \right) = 0 \qquad \forall q_{kh}\in\Qh
%\end{equation}
We start from the Equation \eqref{eq:summed} and we choose $\boldsymbol{v}_h=\dot{\boldsymbol{u}}_h$ and $q_{kh}=p_{kh}$. Then we find:
\begin{equation*}
    \begin{split}
        \rho\left(\ddot{\boldsymbol{u}}_h,\dot{\boldsymbol{u}}_h\right)_\Omega +  \mathcal{A}_\mathrm{E}(\boldsymbol{u}_h,\dot{\boldsymbol{u}}_h) + \sum_{k \in J}\Bigg( -&\mathcal{B}_k(p_{kh},\dot{\boldsymbol{u}}_h) + c_k\left(\dot{p}_{kh},p_{kh}\right)_{\Omega} + \mathcal{A}_{\mathrm{P}_k}(p_{kh},p_{kh}) \\ + &  \mathcal{B}_k\left(p_{kh},
    \dot{\boldsymbol{u}}_h\right) + C_k\left((p_{jh})_{j\in J},q_{kh}\right) \Bigg) = F(\dot{\boldsymbol{u}}_h) + \sum_{k \in J} G_k(p_{kh}).
    \end{split}
\end{equation*}
This choice allows us to simplify the bilinear form $\mathcal{B}_k$, because for any $k\in J$ it appears in the equation with different signs. Then, we obtain:
\begin{equation*}
        \rho\left(\ddot{\boldsymbol{u}}_h,\dot{\boldsymbol{u}}_h\right)_\Omega + \mathcal{A}_\mathrm{E}(\boldsymbol{u}_h,\dot{\boldsymbol{u}}_h) + \sum_{k \in J}\Bigg( c_k\left(\dot{p}_{kh},p_{kh}\right)_{\Omega} + \mathcal{A}_{\mathrm{P}_k}(p_{kh},p_{kh}) + C_k\left((p_{jh})_{j\in J},q_{kh}\right)\Bigg) = F(\dot{\boldsymbol{u}}_h) + \sum_{k \in J} G_k(p_{kh}).
\end{equation*}
Now, we recall the integration by parts formula:
\begin{equation}
    \int_0^t (\dot{v}(s),w(s))_* \mathrm{d}s = (v(t),w(t))_* - (v(0),w(0))_* - \int_0^t (v(s),\dot{w}(s))_* \mathrm{d}s
\end{equation}
which holds for each $v$ and $w$ regular enough and for any scalar product $(\cdot,\cdot)_*$.
\par
The application of this gives rise to the following estimates:
\begin{equation*}
    \begin{split}
        \int_0^t \rho\left(\ddot{\boldsymbol{u}}_h(s),\dot{\boldsymbol{u}}_h(s)\right)_\Omega \mathrm{d}s & = \rho\left(\dot{\boldsymbol{u}}_h(t),\dot{\boldsymbol{u}}_h(t)\right)_\Omega - \rho\left(\dot{\boldsymbol{u}}_h^0,\dot{\boldsymbol{u}}_h^0\right)_\Omega - \int_0^t \rho\left(\ddot{\boldsymbol{u}}_h(s),\dot{\boldsymbol{u}}_h(s)\right)_\Omega \mathrm{d}s \\
        \int_0^t \mathcal{A}_\mathrm{E}(\boldsymbol{u}_h(s),\dot{\boldsymbol{u}}_h(s))\mathrm{d}s & = \mathcal{A}_\mathrm{E}(\boldsymbol{u}_h(t),\boldsymbol{u}_h(t)) - \mathcal{A}_\mathrm{E}(\boldsymbol{u}_h^0,\boldsymbol{u}_h^0)  - \int_0^t\mathcal{A}_\mathrm{E}(\boldsymbol{u}_h(s),\dot{\boldsymbol{u}}_h(s))\mathrm{d}s \mathrm{d}s \\
        \int_0^t c_k\left(\dot{p}_{kh}(s),{p}_{kh}(s)\right)_\Omega \mathrm{d}s & = c_k\left(p_{kh}(t),p_{kh}(t)\right)_\Omega - c_k\left(p_{kh}^0,p_{kh}^0\right)_\Omega - \int_0^t c_k\left(\dot{p}_{kh}(s),{p}_{kh}(s)\right)_\Omega \mathrm{d}s
    \end{split}
\end{equation*}
Then, integrating the equation, we obtain:
\begin{equation*}
    \begin{split}
        ||\sqrt{\rho}&\dot{\boldsymbol{u}}_h(t)||^2 -  ||\sqrt{\rho}\dot{\boldsymbol{u}}_h^0||^2 + \mathcal{A}_\mathrm{E}(\boldsymbol{u}_h(t),\boldsymbol{u}_h(t)) - \mathcal{A}_\mathrm{E}(\boldsymbol{u}_h^0,\boldsymbol{u}_h^0) + \sum_{k \in J}\Bigg( ||\sqrt{c_k} p_{kh}(t)||^2 - ||\sqrt{c_k} p_{kh}^0||^2  + \\ + & 2\int_0^t\mathcal{A}_{\mathrm{P}_k}(p_{kh}(s),p_{kh}(s)) \mathrm{d}s + 
        2\int_0^tC_k\left((p_{jh}(s))_{j\in J},q_{kh}(s)\right) \mathrm{d}s \Bigg) = 2\int_0^t F(\dot{\boldsymbol{u}}_h(s)) \mathrm{d}s + 2\sum_{k \in J} \int_0^t G_k(p_{kh}(s)) \mathrm{d}s.
    \end{split}
\end{equation*}
Now we can use continuity and coercivity estimates we stated in Equation \eqref{eq:incond}:
\begin{equation*}
    \begin{split}
        ||(&\boldsymbol{u}_h,(p_{kh})_{k\in J})(t)||_\varepsilon^2 \leq \\
        \leq & ||\sqrt{\rho}\dot{\boldsymbol{u}}_h(t)||^2 +  ||\boldsymbol{u}_h(t)||_\mathrm{DG,E}^2  + \sum_{k \in J}\left( ||\sqrt{c_k} p_{kh}(t)||^2 + 2\int_0^t \left(||(p_{kh}(s)||_{\mathrm{DG,P}_k}^2 + ||\sqrt{\beta_k^\mathrm{e}}\;p_{kh}(s)||^2\right) \mathrm{d}s \right) \lesssim  \\
        \lesssim &  ||\sqrt{\rho}\dot{\boldsymbol{u}}_h^0||^2 + ||\boldsymbol{u}_h^0||_\mathrm{DG,E}^2 + \sum_{k \in J} ||\sqrt{c_k} p_{kh}^0||^2 + 2\int_0^t F(\dot{\boldsymbol{u}}_h(s)) \mathrm{d}s + 2\sum_{k \in J} \int_0^t G_k(p_{kh}(s)) \mathrm{d}s = \\
        = & \vartheta_0^2 + 2\int_0^t F(\dot{\boldsymbol{u}}_h(s)) \mathrm{d}s + 2\sum_{k \in J} \int_0^t G_k(p_{kh}(s)) \mathrm{d}s
    \end{split}
\end{equation*}
Then we use Equation \eqref{eq:incond} and then the continuity of the linear functionals, to obtain:
\begin{equation*}
    \begin{split}
        ||(\boldsymbol{u}_h, (p_{kh})_{k\in J})(t)||_\varepsilon^2 \lesssim & \vartheta_0^2 + 2\int_0^t ||\boldsymbol{f}(s)||\;||\dot{\boldsymbol{u}}_h(s)|| \mathrm{d}s + 2\sum_{k \in J} \int_0^t ||g_k(s)||\;||p_{kh}(s)|| \mathrm{d}s \lesssim \\
        \lesssim &   \vartheta_0^2 + \int_0^t \dfrac{2}{\sqrt{\rho}}||\boldsymbol{f}(s)||\;||\sqrt{\rho}\dot{\boldsymbol{u}}_h(s)|| \mathrm{d}s + \sum_{k \in J} \int_0^t \dfrac{2}{\sqrt{c_k}}||g_k(s)||\;||\sqrt{c_k}p_{kh}(s)|| \mathrm{d}s \lesssim \\
        \lesssim & \vartheta_0^2 + \int_0^t \left(\dfrac{2}{\sqrt{\rho}}||\boldsymbol{f}(s)|| + \sum_{k \in J} \dfrac{2}{\sqrt{c_k}}||g_k(s)|| \right)||(\boldsymbol{u}_h,(p_{kh})_{k\in J})(s)||_\varepsilon \mathrm{d}s
    \end{split}
\end{equation*}
Using Gr\"{o}nwall Lemma \cite{quarteroni:EDP}, we reach the thesis:
\begin{equation*}
    \begin{split}
        ||(\boldsymbol{u}_h, (p_{kh})_{k\in J})(t)||_\varepsilon
        \lesssim \vartheta_0 + \int_0^t \left(\dfrac{1}{\sqrt{\rho}}||\boldsymbol{f}(s)|| + \sum_{k \in J} \dfrac{1}{\sqrt{c_k}}||g_k(s)|| \right) \mathrm{d}s
    \end{split}
\end{equation*}
\end{proof}

\section{Error analysis}
\label{sec:error}
In this section, we derive an a priori error estimate for the solution of the PolyDG semi-discrete problem \eqref{eq:summed}. For the sake of simplicity we neglect the dependencies of the inequality constants on the model parameters, using the notation $x\lesssim y$ to say that $\exists C>0: x\leq C y$, where $C$ is function of the model parameters (but it is independent of the discretization parameters).
\par
First of all, we need to introduce the following definition:
\begin{equation}
    |||p|||_{\mathrm{DG,P}_j} = ||p||_{\mathrm{DG,P}_j}
    + \Big|\Big|\zeta_j^{-\frac{1}{2}}\dfrac{1}{\mu_j}\averagel \mathbf{K}_j\nabla_h p\averager\Big|\Big|_{\mathrm{L}^2(\facesinternal\cup\facesDj)} \qquad\forall p \in H^2(\partition),
\end{equation}
\begin{equation}
    |||\boldsymbol{v}|||_\mathrm{DG,E} =  ||\boldsymbol{v}||_\mathrm{DG,E} + \Big|\Big|\eta^{-\frac{1}{2}}\averagel\sqrt{\mathbb{C}_\mathrm{E}}[\boldsymbol{\varepsilon}_h(\boldsymbol{v})]\averager \Big|\Big|_{\mathrm{L}^2(\facesinternal\cup\facesD)} \qquad\forall \boldsymbol{v} \in \mathbf{H}^2(\partition,\mathbb{R}^d),
\end{equation}
\par
We introduce the interpolants of the solutions $\boldsymbol{u}_\mathrm{I}\in\Vh$ and $p_{k\mathrm{I}}\in\Qh$ of the continuous formulation \eqref{eq:weakform}. Then, for a polytopic mesh $\partition$ which satisfies Assumption 2, we can define a Stein operator $\mathcal{E}:H^m(K)\rightarrow H^m(\mathbb{R}^d)$ for any $K\in\partition$ and $m\in\mathbb{N}_0$ such that:
\begin{equation*}
    \mathcal{E}v|_K = v \qquad ||\mathcal{E}v||_{H^m(\mathbb{R}^d)} \lesssim ||v||_{H^m(K)}, \qquad \forall v\in H^m(K).
\end{equation*}
\begin{proposition}
Let Assumption 2 be fulfilled. If $d\geq 2$, then the following estimates hold:
\begin{equation}
    \forall \boldsymbol{v}\in H^n(\partition;\mathbb{R}^d)\quad \exists \boldsymbol{v}_\mathrm{I}\in\Vh:\quad|||\boldsymbol{v}-\boldsymbol{v}_\mathrm{I}|||^2_\mathrm{DG,E} \lesssim \sum_{K\in\partition} \dfrac{h_K^{2\min\{p+1,n\}-2}}{p^{2n-3}} ||\mathcal{E}\boldsymbol{v}||^2_{\mathbf{H}^n(K,\mathbb{R}^d)},
\end{equation}
\begin{equation}
    \forall p_j\in\mathbf{H}^n(\partition)\quad \exists p_{j\mathrm{I}}\in\Qh:\quad|||p_j-p_{j\mathrm{I}}|||^2_{\mathrm{DG,P}_j} \lesssim \sum_{K\in\partition} \dfrac{h_K^{2\min\{q+1,n\}-2}}{q^{2n-3}} ||\mathcal{E}p_j||^2_{H^n(K)}.
\end{equation}
\end{proposition}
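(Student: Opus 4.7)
The plan is to construct the interpolants $\boldsymbol{v}_\mathrm{I}$ and $p_{j\mathrm{I}}$ in the standard PolyDG way: apply the Stein extension operator $\mathcal{E}$ elementwise, and then project onto the polynomial space on the shape-regular simplicial covering $\hat{\partition}$ provided by Assumption 2, restricting back to $K\in\partition$. On each simplex $\hat{K}\in\hat{\partition}$, the classical $hp$-approximation result of Babu\v{s}ka–Suri (in the form used by Cangiani, Georgoulis and Houston) yields, for $m=0,1,2$,
\begin{equation*}
\| \mathcal{E}\boldsymbol{v} - \boldsymbol{v}_\mathrm{I}\|_{H^m(\hat K)}
 \lesssim \frac{h_{\hat K}^{\min\{p+1,n\}-m}}{p^{n-m}}\,\|\mathcal{E}\boldsymbol{v}\|_{H^n(\hat K)},
\end{equation*}
and analogously for $p_{j\mathrm{I}}$ with $q$ in place of $p$. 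The assumptions $h_K\lesssim h_{\hat K}$ and the bounded local variation of mesh sizes will transfer these bounds from $\hat{K}$ to the physical element $K\in\partition$ and will allow us to identify $h_K$ on either side of an interface $F$.

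The next step is to control each piece of the triple-norm separately. The bulk contributions $\|\sqrt{\mathbb C_\mathrm E}[\boldsymbol{\varepsilon}_h(\boldsymbol{v}-\boldsymbol{v}_\mathrm I)]\|$ and $\|\sqrt{\mathbf K_j/\mu_j}\nabla_h(p_j-p_{j\mathrm I})\|$ are dispatched directly by the $m=1$ version of the approximation estimate. For the jump contributions $\|\sqrt{\eta}\jjumpl\boldsymbol{v}-\boldsymbol{v}_\mathrm I\jjumpr\|_{L^2(\facesinternal\cup\facesD)}$ and $\|\sqrt{\zeta_j}\jumpl p_j-p_{j\mathrm I}\jumpr\|_{L^2(\facesinternal\cup\facesDj)}$, I apply the polytopic trace inequality
\begin{equation*}
\|w\|_{L^2(F)}^2 \lesssim \frac{p^2}{h_K}\,\|w\|_{L^2(K)}^2 + h_K\,\|\nabla w\|_{L^2(K)}^2\qquad \forall w\in H^1(K),
\end{equation*}
which holds under polytopic regularity. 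Since $\eta \sim p^2/\{h\}_\mathrm H$ and $\zeta_j \sim q^2/\{h\}_\mathrm H$, multiplying by these penalty weights and combining with the $hp$-approximation bounds for $m=0$ and $m=1$ produces exactly the rate $h_K^{2\min\{p+1,n\}-2}/p^{2n-3}$ (respectively with $q$), since the factor $p^2$ from the penalty absorbs one power of $p$ from $m=0$ and the $h_K$ in the second trace term trades with an extra power of $h_K$ from the $m=1$ bound.

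The more delicate pieces are the dual boundary terms $\|\eta^{-1/2}\averagel\sqrt{\mathbb C_\mathrm E}[\boldsymbol{\varepsilon}_h(\boldsymbol{v}-\boldsymbol{v}_\mathrm I)]\averager\|$ and $\|\zeta_j^{-1/2}\tfrac{1}{\mu_j}\averagel\mathbf K_j\nabla_h(p_j-p_{j\mathrm I})\averager\|$, because here the small weight $\eta^{-1/2}\sim\sqrt{\{h\}_\mathrm H}/p$ must cooperate with the trace inequality applied to a gradient of the error. Using the same trace inequality on $\nabla(\boldsymbol{v}-\boldsymbol{v}_\mathrm I)$ and $\nabla(p_j-p_{j\mathrm I})$ brings in $\|\cdot\|_{H^1(K)}$ and $\|\cdot\|_{H^2(K)}$ terms; these are then bounded by the $m=1$ and $m=2$ approximation estimates. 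The resulting powers combine as $\eta^{-1}\cdot p^2/h_K \sim 1$ for the $H^1$-term, yielding the rate from $m=1$ approximation, and $\eta^{-1}\cdot h_K$ for the $H^2$-term, consuming exactly the two extra powers of $h_K$ from $m=2$; the factors of $p$ cancel to leave $p^{2n-3}$ in the denominator. I expect this careful accounting of the $(h,p)$-powers in the dual terms to be the main obstacle, since it is the one place where the sharp exponents in the stated bound are actually pinned down; everything else is a standard broken-Sobolev assembly over $K\in\partition$ combined with Assumption 2 and the boundedness of $\mathbf K_j/\mu_j$ and $\mathbb C_\mathrm E$.
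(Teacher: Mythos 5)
The paper offers no proof of this proposition: it is quoted as a known $hp$-interpolation result for the Stein-extension/covering construction on polytopic meshes, so there is no internal argument to compare against. Your overall route --- extend by $\mathcal{E}$, project onto polynomials over the covering simplices $\hat{K}$ of Assumption~2, transfer the bounds to $K$ via $h_K\lesssim h_{\hat{K}}$ and the bounded local variation of mesh sizes, then treat the bulk, penalty and dual face contributions of $|||\cdot|||$ separately --- is exactly the standard one, and the bulk terms and the dual terms (the $\eta^{-1/2}$- and $\zeta_j^{-1/2}$-weighted averages of stress and flux) are handled correctly; for the latter your bookkeeping in fact produces the better rate $p^{-(2n-2)}$, so your remark that the factors of $p$ there "cancel to leave $p^{2n-3}$" is imprecise but harmless.

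The genuine problem is in the penalty term, which is precisely the term that determines the odd exponent $p^{2n-3}$. The trace inequality you invoke, $||w||^2_{L^2(F)}\lesssim p^2h_K^{-1}||w||^2_{L^2(K)}+h_K||\nabla w||^2_{L^2(K)}$, is the polynomial inverse-trace bound padded with a gradient term; applied to the non-polynomial error $w=\mathcal{E}\boldsymbol{v}-\boldsymbol{v}_\mathrm{I}$ and multiplied by $\eta\sim p^2/h_K$ it yields $p^4h_K^{-2}||w||^2_{L^2(K)}+p^2||\nabla w||^2_{L^2(K)}\lesssim h_K^{2s-2}p^{-(2n-4)}$ with $s=\min\{p+1,n\}$, i.e.\ one full power of $p$ short of the claimed $p^{-(2n-3)}$ (and the same for $\zeta_j$ and $q$). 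To recover the stated exponent you must instead use the multiplicative $H^1$ trace inequality on polytopic-regular elements, $||w||^2_{L^2(\partial K)}\lesssim h_K^{-1}||w||^2_{L^2(K)}+||w||_{L^2(K)}\,||\nabla w||_{L^2(K)}$: its cross term combines the $m=0$ and $m=1$ approximation rates into $h_K^{2s-1}p^{-(2n-1)}$, and multiplication by $p^2/h_K$ then gives exactly $h_K^{2s-2}p^{-(2n-3)}$. With that single substitution your argument closes; as written it only proves the estimates with $p^{2n-4}$ (resp.\ $q^{2n-4}$) in the denominator, which is strictly weaker than the proposition.
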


\subsection{Error estimates}
First of all let us consider $(\boldsymbol{u}_h,(p_{kh})_{k\in J})$ solution of \eqref{eq:DGFormulation} and $(\boldsymbol{u},(p_{k})_{k\in J})$ solution of \eqref{eq:weakform}. To extend the bilinear forms of \eqref{eq:DGFormulation} to the space of continuous solutions we need further regularity requirements. We assume element-wise $H^2$-regularity of the displacement and pressures together with the continuity of the normal stress and fluid flow across the interfaces $F\in\facesinternal$ for all time $t\in(0,T]$. In this context, we need to provide additional boundedness results for the functionals of the formulation:
\begin{proposition}
Let Assumption 2 be satisfied%, and let us assume that $p\leq q+1$.
Then:
\begin{equation}
    |\mathcal{A}_\mathrm{E}(\boldsymbol{v},\boldsymbol{w}_h)| \lesssim |||\boldsymbol{v}|||_\mathrm{DG,E} ||\boldsymbol{w}_h||_\mathrm{DG,E}, \qquad \forall \boldsymbol{v}\in  \mathbf{H}^2(\mathcal{T}_h;\mathbb{R}^d),\forall\boldsymbol{w}_h\in \mathbf{V}_h^\mathrm{DG}
\end{equation}
\begin{equation}
    |\mathcal{A}_{\mathrm{P}_j}(p_{j},q_{jh})| \lesssim |||p_{j}|||_{\mathrm{DG,P}_j}||q_{jh}||_{\mathrm{DG,P}_j}, \qquad \forall p_{j}\in H^2(\partition),\forall q_{jh}\in Q_h^\mathrm{DG}
\end{equation}
\begin{equation}
    |\mathcal{B}_k(q_{kh},\boldsymbol{v})|\lesssim|||\boldsymbol{v}|||_{\mathrm{DG,E}}||q_{kh}|| \qquad \forall \boldsymbol{v}\in  \mathbf{H}^2(\mathcal{T}_h;\mathbb{R}^d),\; \forall q_{kh}\in\Qh
\end{equation}
\begin{equation}
    |\mathcal{B}_k(q_{k},\boldsymbol{v}_h)|\lesssim||\boldsymbol{v}||_{\mathrm{DG,E}}|||q_{kh}|||_{\mathrm{DG,P}_j} \qquad \forall \boldsymbol{v}_h\in\Vh,\; \forall q_{k}\in H^2(\partition)
\end{equation}
\end{proposition}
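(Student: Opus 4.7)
The plan is to establish each inequality by decomposing the bilinear forms into their bulk (volumetric) and facet contributions, applying Cauchy--Schwarz term by term, and then invoking the definitions of the DG-norm and triple norm. The essential difference with respect to Proposition~4 is that one argument now lies in $\mathbf{H}^2(\partition;\mathbb{R}^d)$ (respectively $H^2(\partition)$) rather than in the discrete space, so we cannot apply the discrete trace inequality to the average of its gradient/stress on faces. This is precisely why the triple-norm includes the extra weighted $L^2$-on-faces term on averages of $\sqrt{\mathbb{C}_\mathrm{E}}[\varepsilon_h(\cdot)]$ and $\mathbf{K}_j\nabla_h(\cdot)$.

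For the first bound on $\mathcal{A}_\mathrm{E}(\boldsymbol{v},\boldsymbol{w}_h)$ with $\boldsymbol{v}\in\mathbf{H}^2(\partition;\mathbb{R}^d)$ and $\boldsymbol{w}_h\in\Vh$, I would write the form as the sum of four pieces: the bulk integral, the penalty integral, and the two consistency integrals. The bulk integral is controlled by Cauchy--Schwarz as $\|\sqrt{\mathbb{C}_\mathrm{E}}[\varepsilon_h(\boldsymbol{v})]\|\,\|\sqrt{\mathbb{C}_\mathrm{E}}[\varepsilon_h(\boldsymbol{w}_h)]\|$, which is dominated by $|||\boldsymbol{v}|||_\mathrm{DG,E}\,\|\boldsymbol{w}_h\|_\mathrm{DG,E}$. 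The penalty integral is bounded by $\|\sqrt{\eta}\jjumpl\boldsymbol{v}\jjumpr\|_{\mathcal{F}}\,\|\sqrt{\eta}\jjumpl\boldsymbol{w}_h\jjumpr\|_{\mathcal{F}}$. For the consistency integral involving $\averagel\boldsymbol{\sigma}_\mathrm{E}(\boldsymbol{v})\averager$, I would insert $\eta^{-1/2}\eta^{1/2}$ on the face and apply Cauchy--Schwarz to obtain a product of $\|\eta^{-1/2}\averagel\sqrt{\mathbb{C}_\mathrm{E}}[\varepsilon_h(\boldsymbol{v})]\averager\|_{\mathcal{F}}$ (which is part of $|||\boldsymbol{v}|||_\mathrm{DG,E}$) and $\|\sqrt{\eta}\jjumpl\boldsymbol{w}_h\jjumpr\|_{\mathcal{F}}$. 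For the symmetric consistency integral involving $\averagel\boldsymbol{\sigma}_\mathrm{E}(\boldsymbol{w}_h)\averager$, since $\boldsymbol{w}_h$ is a discrete polynomial function, I would apply the standard polytopic discrete trace inequality together with the choice of $\eta$ proportional to $p^2/h$ to bound $\|\eta^{-1/2}\averagel\sqrt{\mathbb{C}_\mathrm{E}}[\varepsilon_h(\boldsymbol{w}_h)]\averager\|_{\mathcal{F}}$ by $\|\sqrt{\mathbb{C}_\mathrm{E}}[\varepsilon_h(\boldsymbol{w}_h)]\|$, and pair it with $\|\sqrt{\eta}\jjumpl\boldsymbol{v}\jjumpr\|_{\mathcal{F}}\leq|||\boldsymbol{v}|||_\mathrm{DG,E}$.

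The bound on $\mathcal{A}_{\mathrm{P}_j}(p_j,q_{jh})$ follows the same template, with $\mathbf{K}_j/\mu_j$ replacing $\mathbb{C}_\mathrm{E}$ and $\zeta_j$ replacing $\eta$: bulk and penalty terms via Cauchy--Schwarz, consistency on the continuous side by inserting $\zeta_j^{\pm 1/2}$ and using the second term in $|||p_j|||_{\mathrm{DG,P}_j}$, consistency on the discrete side by the discrete trace inequality. For the two continuity bounds on $\mathcal{B}_k$, the bulk term is a single volume integral of $\alpha_k q_k\,\nabla_h\!\cdot\!\boldsymbol{v}$ controlled by $\|q_k\|\,\|\nabla_h\!\cdot\!\boldsymbol{v}\|\lesssim\|q_k\|\,\|\boldsymbol{v}\|_\mathrm{DG,E}$; for the facet consistency integral $\int_F \alpha_k\averagel q_k\mathbf{I}\averager:\jjumpl\boldsymbol{v}\jjumpr$, I would split $\eta^{\pm 1/2}$. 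If $q_k$ is continuous and $\boldsymbol{v}_h$ is discrete (the fourth bound), apply the discrete trace inequality to absorb $\|\eta^{-1/2}\averagel q_k\averager\|_{\mathcal{F}}$ into $|||q_k|||_{\mathrm{DG,P}_j}$-equivalent quantities controlled by $\|q_k\|_{H^1}$-type bounds, or equivalently write the facet integral via $\|\sqrt{\eta}\jjumpl\boldsymbol{v}_h\jjumpr\|_{\mathcal{F}}\leq\|\boldsymbol{v}_h\|_\mathrm{DG,E}$ paired with a face trace bound on $q_k$ absorbed in $|||q_k|||_{\mathrm{DG,P}_j}$. If $\boldsymbol{v}$ is continuous and $q_{kh}$ is discrete (the third bound), the symmetry allows one to pass to $|||\boldsymbol{v}|||_\mathrm{DG,E}$ through the jump-trace of $\boldsymbol{v}$ included in this triple norm.

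The main obstacle I expect is bookkeeping the weights $\eta,\zeta_j$ (which scale with $p^2,q^2$ over $h$, and harmonic averages thereof) so that the scaling constants are independent of $h$, $p$, $q$, and so that on the discrete side the inverse trace inequality of polytopic type exactly cancels the $\eta^{-1/2}$/$\zeta_j^{-1/2}$ weights; this is the step where the assumption of polytopic regularity (Assumption~2) and the explicit form of the penalty functions are used in a non-trivial way. Everything else reduces to Cauchy--Schwarz and to the identities already used in the proof of Proposition~4 in the cited reference.
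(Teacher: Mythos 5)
Your proof is correct and takes essentially the same route as the paper, which does not spell out the argument but defers it to the cited references; those references use precisely your strategy — splitting each form into bulk, penalty and consistency contributions, applying weighted Cauchy--Schwarz with the $\eta^{\pm 1/2}$ and $\zeta_j^{\pm 1/2}$ factors, absorbing the averages of the stress/flux of the non-discrete argument into the extra term of the triple norm, and invoking the polytopic inverse trace inequality (guaranteed by Assumption 2 and the choice of penalty) on the discrete argument. No gap.
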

The proof of these relations could be found in \cite{antonietti_high-order_2022,antonietti_high-order_2020, Bonetti:Porothermo}.
\begin{theorem}
Let Assumptions 1 and 2 be fulfilled and let $\left(\boldsymbol{u},(p_{j})_{j\in J}\right)$ be the solution of \eqref{eq:weakform} for any $t\in(0,T]$ and let it satisfy the following additional regularity requirements:
\begin{equation}
    \boldsymbol{u}\in C^1((0,T];\mathbf{H}^m(\Omega;\mathbb{R}^d)) \qquad p_j\in C^1((0,T]; H^n(\Omega)) \quad\forall j\in J
\end{equation}
for $m,n\geq 2$. Let $\left(\boldsymbol{u}_h,(p_{jh})_{j\in J}\right)$ be the solution of \eqref{eq:summed} for any $t\in(0,T]$. Then, the following estimate holds:
\begin{equation}
    \begin{split}
    |||\left(\boldsymbol{e}^u,(e^{p_j})_{j\in J}\right)(t)|||_\varepsilon^2 \lesssim &
   \sum_{K\in\partition} \dfrac{h_K^{2\min\{p+1,m\}-2}}{p^{2m-3}} \left[||\mathcal{E}\boldsymbol{u}(t)||^2_{\mathbf{H}^m(K,\mathbb{R}^d)} +\int_0^t ||\mathcal{E}\dot{\boldsymbol{u}}(s)||^2_{\mathbf{H}^m(K,\mathbb{R}^d)}\mathrm{d}s +\int_0^t ||\mathcal{E}\ddot{\boldsymbol{u}}(s)||^2_{\mathbf{H}^m(K,\mathbb{R}^d)}\mathrm{d}s\right]
   \\ 
   + & \sum_{K\in\partition} \dfrac{h_K^{2\min\{q+1,n\}-2}}{q^{2n-3}} \sum_{j \in J}\left[||\mathcal{E}p_j(t)||^2_{H^n(K)}+\int_0^t ||\mathcal{E}p_j(s)||^2_{H^n(K)} \mathrm{d}s+ \int_0^t ||\mathcal{E}\dot{p}_j(s)||^2_{H^n(K)} \mathrm{d}s \right],
    \end{split}
\end{equation}
where $\boldsymbol{e}^u=\boldsymbol{u}-\boldsymbol{u}_h$ and $e^{p_j}=p_j-p{jh}$ for any $j\in J$.
\end{theorem}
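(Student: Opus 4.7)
The plan is to follow the standard Galerkin energy-method argument, adapted to the dynamic PolyDG setting: split the total error through the interpolants given by Proposition 4, use consistency of the PolyDG formulation to obtain an equation for the discrete part of the error, and then apply a variant of the stability procedure of Theorem 1 to estimate it. Concretely, I would write $\boldsymbol{e}^u = \boldsymbol{\xi}^u + \boldsymbol{\eta}^u$ and $e^{p_j} = \xi^{p_j} + \eta^{p_j}$, with $\boldsymbol{\xi}^u = \boldsymbol{u}-\boldsymbol{u}_\mathrm{I}$, $\boldsymbol{\eta}^u = \boldsymbol{u}_\mathrm{I}-\boldsymbol{u}_h$, and analogously for pressures. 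The interpolation components $(\boldsymbol{\xi}^u, \xi^{p_j})$ together with their time derivatives are bounded directly by Proposition 4 (the interpolant commutes with $\partial_t$ under the assumed $C^1$-in-time regularity), so by the triangle inequality it remains to control the discrete components $(\boldsymbol{\eta}^u, \eta^{p_j})$ in the energy norm $\|\cdot\|_\varepsilon$.

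To derive the error equation I would use that, thanks to the elementwise $H^2$-regularity, the exact solution $(\boldsymbol{u},(p_k)_{k\in J})$ satisfies \eqref{eq:summed} (the interface/boundary terms in $\mathcal{A}_\mathrm{E}$, $\mathcal{A}_{\mathrm{P}_k}$, $\mathcal{B}_k$ vanish by the usual Galerkin-consistency argument). Subtracting \eqref{eq:summed} and rearranging in terms of $(\boldsymbol{\eta}^u,(\eta^{p_k}))$ yields an equation structurally identical to \eqref{eq:summed} but with a right-hand side $\mathcal{R}(\boldsymbol{v}_h,(q_{kh}))$ collecting the interpolation residuals $-\rho(\ddot{\boldsymbol{\xi}}^u,\boldsymbol{v}_h)-\mathcal{A}_\mathrm{E}(\boldsymbol{\xi}^u,\boldsymbol{v}_h)+\sum_k \mathcal{B}_k(\xi^{p_k},\boldsymbol{v}_h)-\sum_k\bigl[c_k(\dot{\xi}^{p_k},q_{kh})+\mathcal{B}_k(q_{kh},\dot{\boldsymbol{\xi}}^u)+\mathcal{A}_{\mathrm{P}_k}(\xi^{p_k},q_{kh})+C_k((\xi^{p_j})_{j\in J},q_{kh})\bigr]$. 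Testing with $\boldsymbol{v}_h=\dot{\boldsymbol{\eta}}^u$ and $q_{kh}=\eta^{p_k}$, exactly as in the stability proof, makes the $\mathcal{B}_k$ pair cancel on the left; integration in time together with Propositions 1 and 3 then reproduces $\|(\boldsymbol{\eta}^u,(\eta^{p_k})_{k\in J})(t)\|_\varepsilon^2$ on the left, up to initial-time contributions which vanish for the natural choice of discrete initial data.

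The interpolation contributions on the right are bounded via the continuity estimates of Propositions 1, 2 and 5: each becomes a product of a $|||\cdot|||$-norm of an interpolation error with an energy-type norm of the discrete error, which Young's inequality absorbs into the left-hand side. The inertial contribution $\rho(\ddot{\boldsymbol{\xi}}^u,\dot{\boldsymbol{\eta}}^u)$, after Cauchy--Schwarz and integration in time, produces the $\int_0^t \|\mathcal{E}\ddot{\boldsymbol{u}}\|_{\mathbf{H}^m}^2$ contribution in the final bound, while the coupling residual $c_k(\dot{\xi}^{p_k},\eta^{p_k})$ produces $\int_0^t \|\mathcal{E}\dot{p}_j\|_{H^n}^2$. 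Once all such terms have been properly estimated, Grönwall's lemma (as used in Theorem 1) closes the bound on $\|(\boldsymbol{\eta}^u,(\eta^{p_k}))(t)\|_\varepsilon^2$; a final invocation of Proposition 4 converts the interpolation-norm factors into the $h_K^{2\min\{p+1,m\}-2}/p^{2m-3}$ and $h_K^{2\min\{q+1,n\}-2}/q^{2n-3}$ rates, and the triangle inequality together with the already-bounded interpolation part yields the claim.

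I expect the main obstacle to be the mixed time-derivative terms $\mathcal{A}_\mathrm{E}(\boldsymbol{\xi}^u,\dot{\boldsymbol{\eta}}^u)$ and $\mathcal{B}_k(\xi^{p_k},\dot{\boldsymbol{\eta}}^u)$: the energy norm controls $\dot{\boldsymbol{\eta}}^u$ only through $\|\sqrt{\rho}\dot{\boldsymbol{\eta}}^u\|$ and gives no direct $L^2(0,t;\mathrm{DG,E})$ information, so a naive Young-absorption fails. The standard remedy is to integrate these terms by parts in time, transferring $\partial_t$ onto $\boldsymbol{\xi}^u$ (producing $\dot{\boldsymbol{\xi}}^u$, controlled via Proposition 4 by $\|\mathcal{E}\dot{\boldsymbol{u}}\|_{\mathbf{H}^m}$) and generating boundary-in-time contributions of the form $\mathcal{A}_\mathrm{E}(\boldsymbol{\xi}^u(t),\boldsymbol{\eta}^u(t))$ and $\mathcal{B}_k(\xi^{p_k}(t),\boldsymbol{\eta}^u(t))$ that are Young-absorbed against $\|\boldsymbol{\eta}^u(t)\|_{\mathrm{DG,E}}^2$ on the left; these are precisely what give the pointwise-in-time terms $\|\mathcal{E}\boldsymbol{u}(t)\|_{\mathbf{H}^m}^2$ and $\|\mathcal{E}p_j(t)\|_{H^n}^2$ in the statement. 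A careful bookkeeping of these integrations by parts, together with the time-$L^2$ contributions produced by Grönwall, is what reconstructs the three types of contributions (final-time evaluation, first-time-derivative $L^2$, second-time-derivative $L^2$) appearing on the right-hand side, and is where most of the technical effort concentrates.
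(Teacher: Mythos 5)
Your proposal is correct and follows essentially the same route as the paper's proof: the same interpolant-based splitting, the same consistency/error equation tested with $\dot{\boldsymbol{e}}^u_h$ and $e^{p_k}_h$, and the same integration by parts in time of the mixed terms $\mathcal{A}_\mathrm{E}(\boldsymbol{e}^u_\mathrm{I},\dot{\boldsymbol{e}}^u_h)$ and $\mathcal{B}_k(e^{p_k}_\mathrm{I},\dot{\boldsymbol{e}}^u_h)$ (which the paper carries out via the product-rule rewriting $\frac{\mathrm{d}}{\mathrm{d}t}\mathcal{A}_\mathrm{E}(\boldsymbol{e}^u_\mathrm{I},\boldsymbol{e}^u_h)-\mathcal{A}_\mathrm{E}(\dot{\boldsymbol{e}}^u_\mathrm{I},\boldsymbol{e}^u_h)$), followed by Gr\"onwall, Proposition 4 and the triangle inequality. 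You correctly identified the one genuinely delicate step, so no gaps to report.
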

\begin{proof}
 Subtracting the resulting equation from problem \eqref{eq:DGFormulation}, we obtain:
\begin{equation*}
    \begin{split}
        \rho\left(\ddot{\boldsymbol{u}}-\ddot{\boldsymbol{u}}_h,\boldsymbol{v}_h\right)_\Omega + & \mathcal{A}_\mathrm{E}(\boldsymbol{u}-\boldsymbol{u}_h,\boldsymbol{v}_h) + \sum_{k \in J}\Bigg( -\mathcal{B}_k(p_k-p_{kh},\boldsymbol{v}_h) + c_k\left(\dot{p}_k-\dot{p}_{kh},q_{kh}\right)_{\Omega}  \\ + &  \mathcal{A}_{\mathrm{P}_k}(p_k-p_{kh},q_{kh}) + \mathcal{B}_k\left(q_{kh},
    \dot{\boldsymbol{u}}-\dot{\boldsymbol{u}}_h\right) + C_k\left((p_j-p_{jh})_{j\in J},q_{kh}\right)\Bigg) = 0.
    \end{split}
\end{equation*}
We define the errors for the displacement $\boldsymbol{e}^u_h = \boldsymbol{u}_\mathrm{I}-\boldsymbol{u}_h$ and $\boldsymbol{e}^u_\mathrm{I} = \boldsymbol{u}-\boldsymbol{u}_\mathrm{I}$. Analogously, for the pressures $e^{p_k}_h = p_{k\mathrm{I}}-p_{kh}$ and $e^{p_k}_\mathrm{I} = p_k - p_{k\mathrm{I}}$. Then we can rewrite the equation above as follows:
\begin{equation*}
    \begin{split}
        \rho\left(\ddot{\boldsymbol{e}}^u_h,\dot{\boldsymbol{e}}^u_h\right)_\Omega + & \mathcal{A}_\mathrm{E}(\boldsymbol{e}^u_h,\dot{\boldsymbol{e}}^u_h) + \sum_{k \in J}\Bigg( -\mathcal{B}_k(e^{p_k}_h,\dot{\boldsymbol{e}}^u_h) + c_k\left(\dot{e}^{p_k}_h,e^{p_k}_h\right)_{\Omega} + \mathcal{A}_{\mathrm{P}_k}(e^{p_k}_h,e^{p_k}_h) + \mathcal{B}_k\left(e^{p_k}_h,\dot{\boldsymbol{e}}^u_h\right)  \\ + &  C_k\left((e^{p_j}_h)_{j\in J},e^{p_k}_h\right)\Bigg) = \rho\left(\ddot{\boldsymbol{e}}^u_\mathrm{I},\dot{\boldsymbol{e}}^u_h\right)_\Omega + \mathcal{A}_\mathrm{E}(\boldsymbol{e}^u_\mathrm{I},\dot{\boldsymbol{e}}^u_h) + \sum_{k \in J}\Bigg( -\mathcal{B}_k(e^{p_k}_\mathrm{I},\dot{\boldsymbol{e}}^u_h) + c_k\left(\dot{e}^{p_k}_\mathrm{I},e^{p_k}_h\right)_{\Omega}  \\ + & \mathcal{A}_{\mathrm{P}_k}(e^{p_k}_\mathrm{I},e^{p_k}_h) + \mathcal{B}_k\left(e^{p_k}_h,\dot{\boldsymbol{e}}^u_\mathrm{I}\right) + C_k\left((e^{p_j}_\mathrm{I})_{j\in J},e^{p_k}_h\right) \Bigg)
    \end{split}
\end{equation*}
Due to the symmetry of scalar product and $\mathcal{A}_\mathrm{E}$ we can rewrite the problem:
\begin{equation*}
    \begin{split}
        \dfrac{\rho}{2}\dfrac{\mathrm{d}}{\mathrm{d}t}\left(\dot{\boldsymbol{e}}^u_h,\dot{\boldsymbol{e}}^u_h\right)_\Omega + & \dfrac{1}{2}\dfrac{\mathrm{d}}{\mathrm{d}t}\mathcal{A}_\mathrm{E}(\boldsymbol{e}^u_h,\boldsymbol{e}^u_h) + \sum_{k \in J}\Bigg( \dfrac{c_k}{2}\dfrac{\mathrm{d}}{\mathrm{d}t}\left(e^{p_k}_h,e^{p_k}_h\right)_{\Omega} + \mathcal{A}_{\mathrm{P}_k}(e^{p_k}_h,e^{p_k}_h) + C_k\left((e^{p_j}_h)_{j\in J},e^{p_k}_h\right)\Bigg)  \\ = & \rho\left(\ddot{\boldsymbol{e}}^u_\mathrm{I},\dot{\boldsymbol{e}}^u_h\right)_\Omega + \dfrac{\mathrm{d}}{\mathrm{d}t}\mathcal{A}_\mathrm{E}(\boldsymbol{e}^u_\mathrm{I},\boldsymbol{e}^u_h) - \mathcal{A}_\mathrm{E}(\dot{\boldsymbol{e}^u_\mathrm{I}},\boldsymbol{e}^u_h) + \sum_{k \in J}\Bigg( -\dfrac{\mathrm{d}}{\mathrm{d}t}\mathcal{B}_k(e^{p_k}_\mathrm{I},\boldsymbol{e}^u_h) + \mathcal{B}_k(\dot{e^{p_k}_\mathrm{I}},\boldsymbol{e}^u_h) + c_k\left(\dot{e}^{p_k}_\mathrm{I},e^{p_k}_h\right)_{\Omega}  \\ + & \mathcal{B}_k(e^{p_k}_h,\dot{\boldsymbol{e}}^u_\mathrm{I}) + \mathcal{A}_{\mathrm{P}_k}(e^{p_k}_\mathrm{I},e^{p_k}_h) + C_k\left((e^{p_j}_\mathrm{I})_{j\in J},e^{p_k}_h\right)\Bigg).
    \end{split}
\end{equation*}
Now we integrate between $0$ and $t$. We remark that $\boldsymbol{e}^u_h(0)=\boldsymbol{0}$,$\dot{\boldsymbol{e}}^u_h(0)=\boldsymbol{0}$ and $e^{p_k}_h=0$ for each $k\in J$. Then, by proceeding in an analogous way to what we did in the proof of Theorem 1, we obtain:
\begin{equation*}
    \begin{split}
    |||\left(\boldsymbol{e}^u_h,(e^{p_k}_h)_{k\in J}\right)(t)|||_\varepsilon^2 \lesssim & 
    \mathcal{A}_\mathrm{E}(\boldsymbol{e}^u_\mathrm{I}(t),\boldsymbol{e}^u_h(t)) - \sum_{k \in J}\mathcal{B}_k(e^{p_k}_\mathrm{I}(t),\boldsymbol{e}^u_h(t)) +
    \int_0^t\rho\left(\ddot{\boldsymbol{e}}^u_\mathrm{I}(s),\dot{\boldsymbol{e}}^u_h(s)\right)_\Omega - \int_0^t\mathcal{A}_\mathrm{E}(\dot{\boldsymbol{e}}^u_\mathrm{I}(s),\boldsymbol{e}^u_h(s)) \\ + & \sum_{k \in J}\Bigg(\int_0^t\mathcal{B}_k(\dot{e^{p_k}_\mathrm{I}}(s),\boldsymbol{e}^u_h(s)) + \int_0^t c_k\left(\dot{e}^{p_k}_\mathrm{I}(s),e^{p_k}_h(s)\right)_{\Omega} + \int_0^t\mathcal{B}_k(e^{p_k}_h(s),\dot{\boldsymbol{e}}^u_\mathrm{I}(s)) \\ + & \int_0^t\mathcal{A}_{\mathrm{P}_k}(e^{p_k}_\mathrm{I}(s),e^{p_k}_h(s)) + \int_0^t C_k\left((e^{p_j}_\mathrm{I}(s))_{j\in J},e^{p_k}_h(s)\right)\Bigg).
    \end{split}
\end{equation*}
Then exploiting the continuity relations in Proposition 5:
\begin{equation*}
    \begin{split}
    |||\left(\boldsymbol{e}^u_h,(e^{p_k}_h)_{k\in J}\right)(t)|||_\varepsilon^2 \lesssim & 
   |||\boldsymbol{e}^u_\mathrm{I}(t)|||_\mathrm{DG,E}||\boldsymbol{e}^u_h(t)||_\mathrm{DG,E} + \sum_{k \in J}|||e^{p_k}_\mathrm{I}(t)|||_{\mathrm{DG,P}_j}||\boldsymbol{e}^u_h(t)||_{DG,E} \\ 
   + & \int_0^t||\sqrt{\rho}\ddot{\boldsymbol{e}}^u_\mathrm{I}(s)||\;||\sqrt{\rho}\dot{\boldsymbol{e}}^u_h(s)|| + \int_0^t|||\dot{\boldsymbol{e}}^u_\mathrm{I}(s)|||_\mathrm{DG,E}||\boldsymbol{e}^u_h(s)||_\mathrm{DG,E} \\ 
   + & \sum_{k \in J}\Bigg(\int_0^t|||\dot{e}^{p_k}_\mathrm{I}(s)|||_{\mathrm{DG,P}_j}||\boldsymbol{e}^u_h(s)||_{DG,E} + \int_0^t ||\sqrt{c_k}\dot{e}^{p_k}_\mathrm{I}(s)||\;||\sqrt{c_k}e^{p_k}_h(s)|| \\
   + & \int_0^t (||e^{p_k}_h(s)||_{\mathrm{DG,P}_j}|||\dot{\boldsymbol{e}}^u_\mathrm{I}(s)|||_{DG,E} 
   + \int_0^t||e^{p_k}_h(s)||_{\mathrm{DG,P}_j}|||e^{p_k}_\mathrm{I}(s)|||_{\mathrm{DG,P}_j} \\
   + & \sum_{j \in J}\sum_{k \in J}\int_0^t\mathbb{B}||c_j^{-\frac{1}{2}}e^{p_j}_\mathrm{I}(s)||\;||\sqrt{c_k}e^{p_k}_h(s)||\Bigg).
    \end{split}
\end{equation*}
Then using the definition of the energy norm and both H\"older and Young inequalities we obtain:
\begin{equation*}
    \begin{split}
    |||&\left(\boldsymbol{e}^u_h,(e^{p_k}_h)_{k\in J}\right)(t)|||_\varepsilon^2 \lesssim 
   |||\boldsymbol{e}^u_\mathrm{I}(t)|||_\mathrm{DG,E}^2 + \sum_{k \in J}|||e^{p_k}_\mathrm{I}(t)|||_{\mathrm{DG,P}_j}^2 + \int_0^t \left(|||\dot{\boldsymbol{e}}^u_\mathrm{I}(s)|||_\mathrm{DG,E} ^2
   +\sum_{k \in J}|||e^{p_k}_\mathrm{I}(s)|||_{\mathrm{DG,P}_k}^2\right) \\ 
   + & \int_0^t||\left(\boldsymbol{e}^u_h,(e^{p_k}_h)_{k\in J}\right)(s)||_\varepsilon \Bigg(||\sqrt{\rho}\ddot{\boldsymbol{e}}^u_\mathrm{I}(s)||+|||\dot{\boldsymbol{e}}^u_\mathrm{I}(s)|||_\mathrm{DG,E} + \sum_{k \in J}\Big(|||\dot{e}^{p_k}_\mathrm{I}(s)|||_{\mathrm{DG,P}_j} + ||\sqrt{c_k}\dot{e}^{p_k}_\mathrm{I}(s)||+||c_k^{-\frac{1}{2}}e^{p_k}_\mathrm{I}(s)||\Big)\Bigg).
    \end{split}
\end{equation*}
Then by application of the Gr\"onwall lemma \cite{quarteroni:EDP}, we obtain:
\begin{equation*}
    \begin{split}
    |||\left(\boldsymbol{e}^u_h,(e^{p_k}_h)_{k\in J}\right)(t)&|||_\varepsilon^2 \lesssim 
   |||\boldsymbol{e}^u_\mathrm{I}(t)|||_\mathrm{DG,E}^2 + \sum_{k \in J}|||e^{p_k}_\mathrm{I}(t)|||_{\mathrm{DG,P}_j}^2 + \int_0^t \left(|||\dot{\boldsymbol{e}}^u_\mathrm{I}(s)|||_\mathrm{DG,E}^2 
   +\sum_{k \in J}|||e^{p_k}_\mathrm{I}(s)|||_{\mathrm{DG,P}_k}^2\right) \\ 
   + & \int_0^t \left(||\sqrt{\rho}\ddot{\boldsymbol{e}}^u_\mathrm{I}(s)||^2+|||\dot{\boldsymbol{e}}^u_\mathrm{I}(s)|||_\mathrm{DG,E}^2 + \sum_{k \in J}\Big(|||\dot{e}^{p_k}_\mathrm{I}(s)|||_{\mathrm{DG,P}_j}^2 + ||\sqrt{c_k}\dot{e}^{p_k}_\mathrm{I}(s)||^2+||c_k^{-\frac{1}{2}}e^{p_k}_\mathrm{I}(s)||^2\Big)\right).
    \end{split}
\end{equation*}
Then by using the relations of Proposition 4, we find:
\begin{equation*}
\label{eq:resproof}
    \begin{split}
    |||\left(\boldsymbol{e}^u_h,(e^{p_j}_h)_{j\in J}\right)(t)|||_\varepsilon^2 \lesssim &
   \sum_{K\in\partition} \dfrac{h_K^{2\min\{p+1,n\}-2}}{p^{2n-3}} \left[||\mathcal{E}\boldsymbol{u}(t)||^2_{\mathbf{H}^n(K,\mathbb{R}^d)} +\int_0^t ||\mathcal{E}\dot{\boldsymbol{u}}(s)||^2_{\mathbf{H}^n(K,\mathbb{R}^d)}\mathrm{d}s +\int_0^t ||\mathcal{E}\ddot{\boldsymbol{u}}(s)||^2_{\mathbf{H}^n(K,\mathbb{R}^d)}\mathrm{d}s\right]
   \\ 
   + & \sum_{K\in\partition} \dfrac{h_K^{2\min\{q+1,n\}-2}}{q^{2n-3}} \sum_{j \in J}\left[||\mathcal{E}p_j(t)||^2_{H^n(K)}+\int_0^t ||\mathcal{E}p_j(s)||^2_{H^n(K)} \mathrm{d}s+ \int_0^t ||\mathcal{E}\dot{p}_j(s)||^2_{H^n(K)} \mathrm{d}s \right].
    \end{split}
\end{equation*}
Then, we use the triangular inequality to estimate the discretization error.
\begin{equation*}
    |||\left(\boldsymbol{e}^u,(e^{p_j})_{j\in J}\right)(t)|||_\varepsilon^2 \leq  |||\left(\boldsymbol{e}^u_h,(e^{p_j}_h)_{j\in J}\right)(t)|||_\varepsilon^2 + |||\left(\boldsymbol{e}^u_\mathrm{I},(e^{p_j}_\mathrm{I})_{j\in J}\right)(t)|||_\varepsilon^2
\end{equation*}
Finally, by applying the result in Equation \eqref{eq:resproof} and the interpolation error, the thesis follows.
\end{proof}
\section{Time discretization}
\label{sec:temporaldisc}
By fixing a basis for the discrete spaces $(\boldsymbol{\varphi}_n)_{n=0}^{N_u}\subset \Vh$ and $(\psi_n)_{n=0}^{N_p}\subset \Qh$, where $N_u = \dim(\Vh)$ and $N_p = \dim(\Qh)$, such that:
\begin{equation}
    \boldsymbol{u}_h(t) = \sum_{n=0}^{N_u} U_n(t)\boldsymbol{\varphi}_n\qquad p_{kh}(t) = \sum_{n=0}^{N_p} P_{kn}(t)\psi_n\quad \forall k \in J.
\end{equation}
We connect the coefficients of the expansion of $\boldsymbol{u}_h$ and $p_{jh}$ for any $j\in J$ in such a basis in the vectors $\boldsymbol{U}\in\mathbb{R}^{3N_u}$ and $\boldsymbol{P}_k\in\mathbb{R}^{N_p}$ for any $k\in J$. By using the same basis, we are able to define the following matrices:
\begin{equation*}
[\mathrm{M}_{\boldsymbol{u}}]_{ij} = (\rho \boldsymbol{\varphi}_j, \boldsymbol{\varphi}_i)_\Omega\quad\mathrm{(Elasticity\;mass\;matrix)} \qquad 
[\mathrm{K}_{\boldsymbol{u}}]_{ij} = \mathcal{A}_\mathrm{E}(\boldsymbol{\varphi}_j, \boldsymbol{\varphi}_i)\quad\mathrm{(Elasticity\;stiffness\;matrix)}
\end{equation*}
\smallskip
\begin{equation*}
[\mathrm{M}_{k}]_{ij} = (c_k \psi_j, \psi_i)_\Omega\quad(k\mathrm{-th\;pressure\;mass\;matrix)} \qquad
[\mathrm{K}_k]_{ij} = \mathcal{A}_{\mathrm{P}_k}(\psi_j, \psi_i)\quad(k\mathrm{-th\;pressure\;stiffness\;matrix)}
\end{equation*}
\smallskip
\begin{equation*}
[\mathrm{B}_k]_{ij} = \mathcal{B}_k(\psi_j, \boldsymbol{\varphi}_i)\quad\mathrm{(Pressure-displacement\;coupling\;matrix)}
\end{equation*}
\smallskip
\begin{equation*}
[\mathrm{C}_{kl}]_{ij} = (\beta_{kl}\psi_j, \psi_i)_\Omega\quad\mathrm{(Pressure-pressure\;coupling\;matrix)}
\end{equation*}
\smallskip
\begin{equation*}
[\mathrm{C}_{k}^\mathrm{e}]_{ij} = (\beta_{k}^\mathrm{e} \psi_j, \psi_i)_\Omega\quad\mathrm{(Pressure\;external-coupling\;matrix)}
\end{equation*}
Moreover, we define the forcing terms:
\begin{equation*}
[\boldsymbol{F}]_{j} = F(\boldsymbol{\varphi}_j) \qquad [\boldsymbol{G}_k]_{j} = G_k(\psi_j)
\end{equation*}
By exploiting all these definitions, we rewrite the problem \eqref{eq:DGFormulation} in algebraic form:
\begin{equation}
\label{eq:algfull}
    \begin{dcases}
         \mathrm{M}_{\boldsymbol{u}}\ddot{\boldsymbol{U}}(t)+\mathrm{K}_{\boldsymbol{u}}\boldsymbol{U}(t)-\sum_{k\in J} \mathrm{B}_k^\top \boldsymbol{P}_k(t) = \boldsymbol{F}(t), & t\in(0,T) \\[4pt]
         \mathrm{M}_k \dot{\boldsymbol{P}}_k(t)+\mathrm{B}_k \dot{\boldsymbol{U}}(t) + \mathrm{K}_k \boldsymbol{P}_k(t) + \sum_{j\in J} \mathrm{C}_{kj}(\boldsymbol{P}_k(t)-\boldsymbol{P}_j(t)) + \mathrm{C}_k^\mathrm{e} \boldsymbol{P}_k(t) = \boldsymbol{G}_k(t), & t\in(0,T)\quad \forall k \in J \\[4pt]
         \boldsymbol{U}(0) = \boldsymbol{U}_0 \\[8pt]
         \dot{\boldsymbol{U}}(0) = \boldsymbol{V}_0 \\[8pt]
         \boldsymbol{P}_k(0) = \boldsymbol{P}_{k0}, & \forall k \in J  
    \end{dcases}
\end{equation}
Then after the introduction of the vector variable $\boldsymbol{P} = [\boldsymbol{P}_{j_1}, \boldsymbol{P}_{j_2}, ..., \boldsymbol{P}_{j_n}]^\top$ with $j_1,j_2,...,j_n\in J$, we can construct the following matrices:
\begin{equation*}
\mathrm{B} = 
\begin{bmatrix}
\mathrm{B}_{j_1} \\
\mathrm{B}_{j_2} \\
\vdots \\
\mathrm{B}_{j_n}
\end{bmatrix},
\qquad
\mathrm{M}_p = 
\begin{bmatrix}
\mathrm{M}_{j_1} & 0 & \cdots & 0 \\
0 & \mathrm{M}_{j_2} & \cdots & 0 \\
\vdots & \vdots & \ddots & \vdots \\
0 & 0 & \cdots & \mathrm{M}_{j_n}
\end{bmatrix},
\qquad
\boldsymbol{G} = 
\begin{bmatrix}
\boldsymbol{G}_{j_1} \\
\boldsymbol{G}_{j_2} \\
\vdots \\
\boldsymbol{G}_{j_n}
\end{bmatrix},
\end{equation*}
\begin{equation*}
\mathrm{K}_p = 
\begin{bmatrix}
\mathrm{K}_{j_1}+\displaystyle\sum_{i\in J} \mathrm{C}_{j_1 i}  + \mathrm{C}^\mathrm{e}_{j_1} & -\mathrm{C}_{j_1 j_2} & \cdots & -\mathrm{C}_{j_1 j_n} \\
-\mathrm{C}_{j_2 j_1} & \mathrm{K}_{j_2}+\displaystyle\sum_{i\in J} \mathrm{C}_{j_2 i} + \mathrm{C}^\mathrm{e}_{j_2}& \cdots & -\mathrm{C}_{j_2 j_n} \\
\vdots & \vdots & \ddots & \vdots \\
-\mathrm{C}_{j_n j_1} & -\mathrm{C}_{j_n j_2} & \cdots & \mathrm{K}_{j_n}+\displaystyle\sum_{i\in J} \mathrm{C}_{j_n i}  + \mathrm{C}^\mathrm{e}_{j_n}
\end{bmatrix},
\end{equation*}
Then, we write the Equation \eqref{eq:algfull} in a compact form, as follows:
\begin{equation}
    \begin{dcases}
         \mathrm{M}_{\boldsymbol{u}}\ddot{\boldsymbol{U}}(t)+\mathrm{K}_{\boldsymbol{u}}\boldsymbol{U}(t)-\mathrm{B}^\top \boldsymbol{P}(t) = \boldsymbol{F}(t), & t\in(0,T) \\[8pt]
         \mathrm{M}_p \dot{\boldsymbol{P}}(t)+\mathrm{B} \dot{\boldsymbol{U}}(t) + \mathrm{K}_p \boldsymbol{P}(t) = \boldsymbol{G}(t), & t\in(0,T) \\[8pt]
         \boldsymbol{U}(0) = \boldsymbol{U}_0 \qquad
         \dot{\boldsymbol{U}}(0) = \boldsymbol{V}_0 \\[8pt]
         \boldsymbol{P}(0) = \boldsymbol{P}_{0}
    \end{dcases}
\end{equation}
\par
Let now construct a temporal discretization of the interval $(0,T)$ by constructing a partition of $N$ intervals $0=t_0<t_1<...<t_N=T$. We assume a constant timestep $\Delta t = t_{n+1}-t_n$ for each $n=0,...,N$. Let now construct a discretized formulation by means of the Newmark $\beta$-method for the first equation. We introduce a velocity vector $\boldsymbol{Z}^{n}$, and an acceleration one $\boldsymbol{A}^{n}$. %We define them using the Taylor expansion of the displacement variable:
%\begin{equation}
%\begin{dcases}
%\boldsymbol{U}^{n+1} = \boldsymbol{U}^{n} + \Delta t\boldsymbol{Z}^{n} + \Delta t^2(\beta \boldsymbol{A}^{n+1} + (\tfrac{1}{2}-\beta)\boldsymbol{A}^k) \\
%\boldsymbol{Z}^{n+1} = \boldsymbol{Z}^{n} + \Delta t(\gamma \boldsymbol{A}^{n+1} + (1-\gamma)\boldsymbol{A}^k)
%\end{dcases}
%\end{equation}
Then, we have the following equations to be solved at each timestep $t_n$:
\begin{equation}
\begin{dcases}
\left(\dfrac{1}{\beta \Delta t^2}\mathrm{M}_{\boldsymbol{u}}+\mathrm{K}_{\boldsymbol{u}}\right)\boldsymbol{U}^{n+1} -\mathrm{B}^\top \boldsymbol{P}^{n+1} = \boldsymbol{F}^{n+1} +\dfrac{1}{\beta \Delta t^2}\mathrm{M}_{\boldsymbol{u}}\boldsymbol{U}^{n} + \dfrac{1}{\beta\Delta t} \mathrm{M}_{\boldsymbol{u}}\boldsymbol{Z}^{n} + \dfrac{1-2\beta}{2\beta}\mathrm{M}_{\boldsymbol{u}}\boldsymbol{A}^{n} \\
\boldsymbol{A}^{n+1} =  \dfrac{1}{\beta \Delta t^2}(\boldsymbol{U}^{n+1}-\boldsymbol{U}^{n}) -\dfrac{1}{\beta \Delta t}\boldsymbol{Z}^{n} + \dfrac{2\beta-1}{2\beta}\boldsymbol{A}^{n} \\
\boldsymbol{Z}^{n+1} = \boldsymbol{Z}^{n} + \Delta t(\gamma \boldsymbol{A}^{n+1} + (1-\gamma)\boldsymbol{A}^{n})
\end{dcases}
\end{equation}
We couple the problem above with a $\theta$-method for the pressure equations. To obtain the formulation we consider first the definition of the velocity $\boldsymbol{Z}=\dot{\boldsymbol{U}}$ at time-continuous level, which gives us the following expression:
\begin{equation}
    \mathrm{M}_p \dot{\boldsymbol{P}}(t)+\mathrm{B} \boldsymbol{Z}(t) + \mathrm{K}_p \boldsymbol{P}(t) = \boldsymbol{G}(t), \quad t\in(0,T).
\end{equation}
Using this form of the equation, the derivation of the discretized equation naturally follows:
\begin{equation}
\begin{split}
    \mathrm{M}_p\boldsymbol{P}^{n+1} = & \mathrm{M}_p\boldsymbol{P}^{n} + \Delta t\theta( \boldsymbol{G}^{n+1} - \mathrm{B} \boldsymbol{Z}^{n+1} - \mathrm{K}_p \boldsymbol{P}^{n+1}) + \Delta t(1-\theta)( \boldsymbol{G}^{n} - \mathrm{B} \boldsymbol{Z}^{n} - \mathrm{K}_p \boldsymbol{P}^{n}) = \\
    = & \mathrm{M}_p\boldsymbol{P}^{n} + \Delta t\theta( \boldsymbol{G}^{n+1} - \tfrac{\gamma}{\beta \Delta t}\mathrm{B}(\boldsymbol{U}^{n+1}-\boldsymbol{U}^{n}) - (1-\tfrac{\gamma}{\beta})\mathrm{B}\boldsymbol{Z}^{n} - (1-\tfrac{\gamma}{2\beta})\Delta t\mathrm{B}\boldsymbol{A}^{n}- \mathrm{K}_p \boldsymbol{P}^{n+1}) \\ + & \Delta t(1-\theta)( \boldsymbol{G}^{n} - \mathrm{B} \boldsymbol{Z}^{n} - \mathrm{K}_p \boldsymbol{P}^{n})
\end{split}
\end{equation}
The final algebraic discretized formulation reads as follows:
\begin{equation}
\label{eq:fullyalgebraicproblem}
\begin{dcases}
\left(\dfrac{1}{\beta \Delta t^2}\mathrm{M}_{\boldsymbol{u}}+\mathrm{K}_{\boldsymbol{u}}\right)\boldsymbol{U}^{n+1} -\mathrm{B}^\top \boldsymbol{P}^{n+1} = \boldsymbol{F}^{n+1} +\dfrac{1}{\beta \Delta t^2}\mathrm{M}_{\boldsymbol{u}}\boldsymbol{U}^{n} + \dfrac{1}{\beta\Delta t} \mathrm{M}_{\boldsymbol{u}}\boldsymbol{Z}^{n} + \dfrac{1-2\beta}{2\beta}\mathrm{M}_{\boldsymbol{u}}\boldsymbol{A}^{n} \\
\begin{aligned}
\left(\dfrac{1}{\Delta t}\mathrm{M}_p + \theta\mathrm{K}_p\right)\boldsymbol{P}^{n+1} + \dfrac{\theta\gamma}{\beta \Delta t}\mathrm{B}\boldsymbol{U}^{n+1} = & \theta \boldsymbol{G}^{n+1} + (1-\theta) \boldsymbol{G}^{n} + \left(\dfrac{1}{\Delta t}\mathrm{M}_p-(1-\theta)\mathrm{K}_p \right)\boldsymbol{P}^{n} + \dfrac{\theta\gamma}{\beta \Delta t}\mathrm{B}\boldsymbol{U}^{n} \\ + & \left(\dfrac{\theta\gamma}{\beta}-1\right)\mathrm{B}\boldsymbol{Z}^{n} - \theta\left(1-\dfrac{\gamma}{2\beta}\right)\Delta t\mathrm{B}\boldsymbol{A}^{n}
\end{aligned}
\\
\boldsymbol{A}^{n+1} =  \dfrac{1}{\beta \Delta t^2}(\boldsymbol{U}^{n+1}-\boldsymbol{U}^{n}) -\dfrac{1}{\beta \Delta t}\boldsymbol{Z}^{n} + \dfrac{2\beta-1}{2\beta}\boldsymbol{A}^{n} \\
\boldsymbol{Z}^{n+1} = \boldsymbol{Z}^{n} + \Delta t(\gamma \boldsymbol{A}^{n+1} + (1-\gamma)\boldsymbol{A}^{n})
\end{dcases}
\end{equation}
In order to rewrite Equation \eqref{eq:fullyalgebraicproblem} in matrix form, we introduce the following matrices:
\begin{equation*}
\mathrm{A}_1 = \begin{bmatrix}
\dfrac{\mathrm{M}_{\boldsymbol{u}}}{\beta \Delta t^2}+\mathrm{K}_{\boldsymbol{u}} & -\mathrm{B}^\top & 0 & 0 \\
\dfrac{\theta\gamma}{\beta \Delta t}\mathrm{B} & \dfrac{\mathrm{M}_p}{\Delta t}+ \theta\mathrm{K}_p & 0 & 0 \\
0 & 0 & \mathrm{I} & -\Delta t \gamma \mathrm{I} \\
-\dfrac{\mathrm{I}}{\beta \Delta t^2} & 0 & 0 & \mathrm{I} 
\end{bmatrix}
\qquad
\boldsymbol{X}^{n} = 
\begin{bmatrix}
\boldsymbol{U}^{n} \\
\boldsymbol{P}^{n} \\
\boldsymbol{Z}^{n} \\
\boldsymbol{A}^{n} 
\end{bmatrix}
\end{equation*}
\begin{equation*}
\mathrm{A}_2 = \begin{bmatrix}
\dfrac{\mathrm{M}_{\boldsymbol{u}}}{\beta \Delta t^2} & -\mathrm{B}^\top & \dfrac{\mathrm{M}_{\boldsymbol{u}}}{\beta \Delta t} &  \dfrac{1-2\beta}{2\beta}\mathrm{M}_{\boldsymbol{u}} \\
\dfrac{\theta\gamma\mathrm{B}}{\beta \Delta t} & \dfrac{\mathrm{M}_p}{\Delta t}-\tilde{\theta}\mathrm{K}_p & \left(\dfrac{\theta\gamma}{\beta}-1\right)\mathrm{B} & \left(\theta-\dfrac{\theta\gamma}{2\beta}\right)\Delta t\mathrm{B} \\
0 & 0 & \mathrm{I} & \Delta t (1-\gamma)\mathrm{I} \\
-\dfrac{\mathrm{I}}{\beta \Delta t^2} & 0 & -\dfrac{\mathrm{I}}{\beta \Delta t} & \dfrac{2\beta-1}{2\beta}\mathrm{I} 
\end{bmatrix}
\qquad
\boldsymbol{S}^{n+1} = 
\begin{bmatrix}
\boldsymbol{F}^{n+1} \\
\theta \boldsymbol{G}^{n+1} + \tilde{\theta} \boldsymbol{G}^{n} \\
0 \\
0 
\end{bmatrix} 
\end{equation*}
Finally, we the algebraic formulation reads as follows:
\begin{equation}
    \mathrm{A}_1 \boldsymbol{X}^{n+1} = \mathrm{A}_2 \boldsymbol{X}^{n} + \boldsymbol{S}^{n+1} \qquad n>0.
\end{equation}
\section{Numerical results}
\label{sec:numericalresults}
In this section, we aim at validating the accuracy of the method practice. All simulations are carried out considering the following choice of Newmark parameters $\beta=0.25$ and $\gamma=0.5$; moreover, we choose $\theta=0.5$.
\subsection{Test case 1: convergence analysis in a 3D case}
For the numerical test in this section, we use the FEniCS finite element software \cite{FenicsCode} (version 2019). We use a cubic domain with structured tetrahedral mesh. Concerning the temporal discretization, we use a timestep $\Delta t = 10^{-5}$ and a maximum time $T=5\times 10^{-3}$. We consider the following manufactured exact solution for a case with four pressure fields:
\begin{equation}
    \begin{matrix}
    \boldsymbol{u}(x,y,z,t)=\sin(\pi t)
    \begin{bmatrix}
    -\cos(\pi x)\cos(\pi y) \\
    \sin(\pi x)\sin(\pi y)  \\
    z  
    \end{bmatrix}
    \\[8pt]
    p_1(x,y,z,t)=p_3(x,y,z,t)=\pi\sin(\pi t)(\cos(\pi y)\sin(\pi x) + \cos(\pi x)\sin(\pi y)) z \\[8pt]
    p_2(x,y,z,t)=p_4(x,y,z,t)=\pi\sin(\pi t)(\cos(\pi y)\sin(\pi x) - \cos(\pi x)\sin(\pi y)) z
    \end{matrix}
\end{equation}
A fundamental assumption in this section is the isotropic permeability of the pressure fields $\mathbf{\mathrm{K}}_j=k_j\mathbf{\mathrm{I}}$ for $j=1,...,4$. We report the values of the physical parameters we use for this simulation in Table \ref{tab:3Dparam}.
\begin{table}[t]
	\centering
	\begin{tabular}{|c|r l|c|r l|}
		\hline
		\textbf{Parameter} & \multicolumn{2}{c|}{\textbf{Value}} &	\textbf{Parameter} & \multicolumn{2}{c|}{\textbf{Value}} \\ 
		\hline 
		$\rho$                  &  $1.00$       & $[\mathrm{Kg/m^3}]$ 
		& $k_j\;(j=1,...,4)$     &  $1.00$       & $[\mathrm{m^2}]$ \\ 
		\hline 
		$\lambda$               & $1.00$        & $[\mathrm{Pa}]$  
		& $\mu_j\;(j=1,...,4)$   & $1.00$        & $[\mathrm{Pa\cdot s}]$\\ 
		\hline 
		$\mu$                   & $1.00$        & $[\mathrm{Pa}]$  
		& $\beta_{12},\,\beta_{34}$	& $1.00$        & $[\mathrm{m^2/(N\cdot s)}]$  \\
		\hline 
		$\alpha_j\;(j=1,...,4)$  & $0.25$         & $[-]$  
		& $\beta_{13},\,\beta_{14},\,\beta_{23},\,\beta_{24}$	& $0.00$        & $[\mathrm{m^2/(N\cdot s)}]$  \\ 
		\hline 
		$c_j\;(j=1,...,4)$   	& $0.10$ & $[\mathrm{m^2/N}]$  &
		$\beta_j^{\mathrm{e}}\;(j=1,...,4)$
		& $0.00$ & $[\mathrm{m^2/(N\cdot s)}]$  \\  
		\hline 
	\end{tabular}
	\caption{Physical parameter values used in the 3D simulation.}
	\label{tab:3Dparam}
\end{table}

%%%%%%%%%%%%%%%%%%%%%%%%%%%%%%%%%%%%%%%%%%%%%%%%%%%%%%%%%%%%%
\begin{table}[t]
    \centering
    \begin{tabular}{c|c c c c|c c c c}
    \hline
    \multirow{2}{*}{$h$}
    & \multicolumn{4}{c|}{$\mathbb{P}_1-\mathbb{P}_2$} 
    & \multicolumn{4}{c}{$\mathbb{P}_1-\mathbb{P}_1$}
    \\
    & $||\boldsymbol{e}^{\boldsymbol{u}}||_{\mathrm{DG},e}$ 
    & $\mathrm{roc}^{\boldsymbol{u}}_{\mathrm{DG}}$ 
    & $\sum_{k\in J}||\sqrt{c_k}e^{p_{k}}||$
    & $\mathrm{roc}^{p}_{L^2}$  
    & $||\boldsymbol{e}^{\boldsymbol{u}}||_{\mathrm{DG},e}$ 
    & $\mathrm{roc}^{\boldsymbol{u}}_{\mathrm{DG}}$  
    & $\sum_{k\in J}||\sqrt{c_k}e^{p_{k}}||$
    & $\mathrm{roc}^{p}_{L^2}$
    \\ \hline
    0.866 
    & $1.97\times10^{-2}$ & -
    & $9.52\times10^{-3}$ & -
    & $7.26\times10^{-2}$ & -
    & $2.01\times10^{-2}$ & -
    \\ \hline
    0.433 
    & $3.69\times10^{-3}$ & $2.42$ 
    & $2.56\times10^{-3}$ & $1.89$ 
    & $2.87\times10^{-2}$ & $1.34$ 
    & $5.56\times10^{-3}$ & $1.85$ 
    \\ \hline
    0.217 
    & $6.53\times10^{-4}$ & $2.50$ 
    & $6.23\times10^{-4}$ & $2.04$ 
    & $1.06\times10^{-2}$ & $1.43$ 
    & $1.42\times10^{-3}$ & $1.94$ 
    \\ \hline
    0.108 
    & $1.44\times10^{-4}$ & $2.18$
    & $1.49\times10^{-4}$ & $2.06$ 
    & $4.43\times10^{-3}$ & $1.26$ 
    & $3.67\times10^{-4}$ & $1.98$
    \\ \hline \hline
    \multirow{2}{*}{$h$}
    & \multicolumn{4}{c|}{$\mathbb{P}_2-\mathbb{P}_3$} 
    & \multicolumn{4}{c}{$\mathbb{P}_2-\mathbb{P}_2$}
    \\
    & $||\boldsymbol{e}^{\boldsymbol{u}}||_{\mathrm{DG},e}$ 
    & $\mathrm{roc}^{\boldsymbol{u}}_{\mathrm{DG}}$ 
    & $\sum_{k\in J}||\sqrt{c_k}e^{p_{k}}||$
    & $\mathrm{roc}^{p}_{L^2}$  
    & $||\boldsymbol{e}^{\boldsymbol{u}}||_{\mathrm{DG},e}$ 
    & $\mathrm{roc}^{\boldsymbol{u}}_{\mathrm{DG}}$  
    & $\sum_{k\in J}||\sqrt{c_k}e^{p_{k}}||$
    & $\mathrm{roc}^{p}_{L^2}$
    \\ \hline
    0.866 
    & $4.26\times10^{-3}$ & -
    & $9.42\times10^{-4}$ & -
    & $1.97\times10^{-2}$ & -
    & $2.28\times10^{-3}$ & -
    \\ \hline
    0.433 
    & $3.91\times10^{-4}$ & $3.44$ 
    & $1.17\times10^{-4}$ & $3.00$ 
    & $3.69\times10^{-3}$ & $2.41$ 
    & $2.24\times10^{-4}$ & $3.34$ 
    \\ \hline
    0.217 
    & $4.02\times10^{-5}$ & $3.28$ 
    & $1.38\times10^{-5}$ & $3.10$ 
    & $6.54\times10^{-4}$ & $2.49$ 
    & $1.89\times10^{-5}$ & $3.56$ 
    \\ \hline
    0.108 
    & $6.23\times10^{-6}$ & $2.70$
    & $1.64\times10^{-6}$ & $3.07$ 
    & $1.47\times10^{-4}$ & $2.15$ 
    & $1.80\times10^{-7}$ & $3.38$
    \\ \hline  \hline
    \multirow{2}{*}{$h$}
    & \multicolumn{4}{c|}{$\mathbb{P}_3-\mathbb{P}_4$} 
    & \multicolumn{4}{c}{$\mathbb{P}_3-\mathbb{P}_3$}
    \\
    & $||\boldsymbol{e}^{\boldsymbol{u}}||_{\mathrm{DG},e}$ 
    & $\mathrm{roc}^{\boldsymbol{u}}_{\mathrm{DG}}$ 
    & $\sum_{k\in J}||\sqrt{c_k}e^{p_{k}}||$
    & $\mathrm{roc}^{p}_{L^2}$  
    & $||\boldsymbol{e}^{\boldsymbol{u}}||_{\mathrm{DG},e}$ 
    & $\mathrm{roc}^{\boldsymbol{u}}_{\mathrm{DG}}$  
    & $\sum_{k\in J}||\sqrt{c_k}e^{p_{k}}||$
    & $\mathrm{roc}^{p}_{L^2}$
    \\ \hline
    0.866 
    & $7.79\times10^{-4}$ & -
    & $1.88\times10^{-4}$ & -
    & $4.26\times10^{-3}$ & -
    & $2.34\times10^{-4}$ & -
    \\ \hline
    0.433 
    & $3.44\times10^{-5}$ & $4.50$ 
    & $1.17\times10^{-5}$ & $4.02$ 
    & $3.91\times10^{-4}$ & $3.44$ 
    & $1.95\times10^{-5}$ & $3.58$ 
    \\ \hline
    0.217 
    & $2.00\times10^{-6}$ & $4.10$ 
    & $7.31\times10^{-7}$ & $3.99$ 
    & $4.02\times10^{-5}$ & $3.28$ 
    & $1.37\times10^{-6}$ & $3.84$ 
    \\ \hline
    0.108 
    & $1.63\times10^{-7}$ & $3.62$
    & $4.59\times10^{-8}$ & $3.99$ 
    & $6.22\times10^{-6}$ & $2.70$ 
    & $8.80\times10^{-8}$ & $3.96$
    \\ \hline
    \end{tabular}
    \caption{Error estimates and convergence rates for the 3D test case.}
    \label{tab:errors3D}
\end{table}

\begin{figure}[t!]
    \begin{subfigure}[b]{0.5\textwidth}
          \resizebox{\textwidth}{!}{\definecolor{mycolor1}{rgb}{1.00000,1.00000,0.00000}%
\definecolor{mycolor2}{rgb}{0.00000,1.00000,1.00000}%
\pgfplotsset{
  log x ticks with fixed point/.style={
      xticklabel={
        \pgfkeys{/pgf/fpu=true}
        \pgfmathparse{exp(\tick)}%
        \pgfmathprintnumber[fixed  zerofill, precision=2]{\pgfmathresult}
        \pgfkeys{/pgf/fpu=false}
      }
  }
}
\begin{tikzpicture}

\begin{axis}[%
width=3.875in,
height=2.36in,
at={(2.6in,1.099in)},
scale only axis,
xmode=log,
xmin=0.10825,
xmax=0.866,
xminorticks=true,
xlabel = {$h$},
log x ticks with fixed point,
ymode=log,
ymin=6.98498691641807e-05,
ymax=0.0197481362791957,
yminorticks=true,
axis background/.style={fill=white},
title style={font=\bfseries},
title={Convergence for $\mathbb{P}_1-\mathbb{P}_2$ elements},
xmajorgrids,
xminorgrids,
ymajorgrids,
yminorgrids,
legend style={at={(0.02,0.56)}, anchor=south west, legend cell align=left, align=left, draw=white!15!black}
]
\addplot [color=blue, line width=2.0pt]
  table[row sep=crcr]{%
0.866	0.00465524789378733\\
0.433	0.00116266613416173\\
0.2165	0.000284900630050124\\
0.10825	6.98498691641807e-05\\
};
\addlegendentry{$||\sqrt{c_1}(p_1-p_1^\mathrm{ex})||$}

\addplot [color=red, line width=2.0pt]
  table[row sep=crcr]{%
0.866	0.0104099598691013\\
0.433	0.00289910116294254\\
0.2165	0.000701723565827211\\
0.10825	0.000166965254469331\\
};
\addlegendentry{$||\sqrt{c_2}(p_2-p_2^\mathrm{ex})||$}

\addplot [color=mycolor1, dashed, line width=2.0pt]
  table[row sep=crcr]{%
0.866	0.00465524789378622\\
0.433	0.00116266613416806\\
0.2165	0.000284900630012191\\
0.10825	6.98498691786793e-05\\
};
\addlegendentry{$||\sqrt{c_3}(p_3-p_3^\mathrm{ex})||$}

\addplot [color=green, dashed, line width=2.0pt]
  table[row sep=crcr]{%
0.866	0.0104099598690862\\
0.433	0.00289910116294314\\
0.2165	0.000701723565828025\\
0.10825	0.000166965254479653\\
};
\addlegendentry{$||\sqrt{c_4}(p_4-p_4^\mathrm{ex})||$}

\addplot [color=mycolor2, line width=2.0pt]
  table[row sep=crcr]{%
0.866	0.0197481362791957\\
0.433	0.00369212007243042\\
0.2165	0.000653290556293329\\
0.10825	0.000144209492584824\\
};
\addlegendentry{$||\boldsymbol{u}-\boldsymbol{u}^\mathrm{ex}||_\mathrm{DG,e}$}

\addplot [color=black, line width=1.5pt]
  table[row sep=crcr]{%
0.200	0.00020\\
0.141	0.00010\\
0.200	0.00010\\
0.200	0.00020\\
};

\node[right, align=left, text=black, font=\footnotesize]
at (axis cs:0.2,1.4e-4) {$2$};

\end{axis}
\end{tikzpicture}%}
    \end{subfigure}%
    \begin{subfigure}[b]{0.5\textwidth}
        \resizebox{\textwidth}{!}{\definecolor{mycolor1}{rgb}{1.00000,1.00000,0.00000}%
\definecolor{mycolor2}{rgb}{0.00000,1.00000,1.00000}%
\pgfplotsset{
  log x ticks with fixed point/.style={
      xticklabel={
        \pgfkeys{/pgf/fpu=true}
        \pgfmathparse{exp(\tick)}%
        \pgfmathprintnumber[fixed  zerofill, precision=2]{\pgfmathresult}
        \pgfkeys{/pgf/fpu=false}
      }
  }
}
\begin{tikzpicture}

\begin{axis}[%
width=3.875in,
height=2.36in,
at={(2.6in,1.099in)},
scale only axis,
xmode=log,
xmin=0.10825,
xmax=0.866,
xminorticks=true,
xlabel = {$h$},
log x ticks with fixed point,
ymode=log,
ymin=0.0001,
ymax=0.1,
yminorticks=true,
axis background/.style={fill=white},
title style={font=\bfseries},
title={Convergence for $\mathbb{P}_1-\mathbb{P}_1$ elements},
xmajorgrids,
xminorgrids,
ymajorgrids,
yminorgrids,
legend style={legend cell align=left, align=left, draw=white!15!black}
]
\addplot [color=blue, line width=2.0pt]
  table[row sep=crcr]{%
0.866	0.0131414177476159\\
0.433	0.00356850220711797\\
0.2165	0.000939028290558287\\
0.10825	0.000240306808545034\\
};
%\addlegendentry{$||\sqrt{c_\mathrm{C}}(p_\mathrm{C}-p_\mathrm{C}^\mathrm{ex})||$}

\addplot [color=red, line width=2.0pt]
  table[row sep=crcr]{%
0.866	0.0185889532762586\\
0.433	0.00525478516122817\\
0.2165	0.00135666773867535\\
0.10825	0.000340306435084562\\
};
%\addlegendentry{$||\sqrt{c_\mathrm{A}}(p_\mathrm{A}-p_\mathrm{A}^\mathrm{ex})||$}

\addplot [color=mycolor1, dashed, line width=2.0pt]
  table[row sep=crcr]{%
0.866	0.0131414177476159\\
0.433	0.00356850220711797\\
0.2165	0.000939028290558287\\
0.10825	0.000240306808545034\\
};
%\addlegendentry{$||\sqrt{c_\mathrm{V}}(p_\mathrm{V}-p_\mathrm{V}^\mathrm{ex})||$}

\addplot [color=green, dashed, line width=2.0pt]
  table[row sep=crcr]{%
0.866	0.0185889532762586\\
0.433	0.00525478516122817\\
0.2165	0.00135666773867535\\
0.10825	0.000340306435084562\\
};
%\addlegendentry{$||\sqrt{c_\mathrm{E}}(p_\mathrm{E}-p_\mathrm{E}^\mathrm{ex})||$}

\addplot [color=mycolor2, line width=2.0pt]
  table[row sep=crcr]{%
0.866	0.0726916668593333\\
0.433	0.0287159386248445\\
0.2165	0.0106466329103709\\
0.10825	0.00442784909514489\\
};
%\addlegendentry{$||\boldsymbol{u}-\boldsymbol{u}^\mathrm{ex}||_\mathrm{DG,e}$}

\addplot [color=black, line width=1.5pt]
  table[row sep=crcr]{%
0.200	0.00060\\
0.141	0.00030\\
0.200	0.00030\\
0.200	0.00060\\
};

\node[right, align=left, text=black, font=\footnotesize]
at (axis cs:0.2,4e-4) {$2$};

\addplot [color=black, line width=1.5pt]
  table[row sep=crcr]{%
0.200	0.0080\\
0.141	0.00564\\
0.200	0.00564\\
0.200	0.0080\\
};

\node[right, align=left, text=black, font=\footnotesize]
at (axis cs:0.2,7e-3) {$1$};

\end{axis}
\end{tikzpicture}%}
    \end{subfigure}%
    \\
    \begin{subfigure}[b]{0.5\textwidth}
          \resizebox{\textwidth}{!}{\definecolor{mycolor1}{rgb}{1.00000,1.00000,0.00000}%
\definecolor{mycolor2}{rgb}{0.00000,1.00000,1.00000}%
\pgfplotsset{
  log x ticks with fixed point/.style={
      xticklabel={
        \pgfkeys{/pgf/fpu=true}
        \pgfmathparse{exp(\tick)}%
        \pgfmathprintnumber[fixed  zerofill, precision=2]{\pgfmathresult}
        \pgfkeys{/pgf/fpu=false}
      }
  }
}
\begin{tikzpicture}

\begin{axis}[%
width=3.875in,
height=2.36in,
at={(2.6in,1.099in)},
scale only axis,
xmode=log,
xmin=0.10825,
xmax=0.866,
xminorticks=true,
xlabel = {$h$},
log x ticks with fixed point,
ymode=log,
ymin=1e-07,
ymax=0.01,
yminorticks=true,
axis background/.style={fill=white},
title style={font=\bfseries},
title={Convergence for $\mathbb{P}_2-\mathbb{P}_3$ elements},
xmajorgrids,
xminorgrids,
ymajorgrids,
yminorgrids,
legend style={legend cell align=left, align=left, draw=white!15!black}
]
\addplot [color=blue, line width=2.0pt]
  table[row sep=crcr]{%
0.866	0.00037116677525922\\
0.433	4.4097917409646e-05\\
0.2165	5.19328736659226e-06\\
0.10825	6.28935318455611e-07\\
};
%\addlegendentry{$||\sqrt{c_\mathrm{C}}(p_\mathrm{C}-p_\mathrm{C}^\mathrm{ex})||$}

\addplot [color=red, line width=2.0pt]
  table[row sep=crcr]{%
0.866	0.00112000971148612\\
0.433	0.000142254817339743\\
0.2165	1.65915572450037e-05\\
0.10825	1.97127724494002e-06\\
};
%\addlegendentry{$||\sqrt{c_\mathrm{A}}(p_\mathrm{A}-p_\mathrm{A}^\mathrm{ex})||$}

\addplot [color=mycolor1, dashed, line width=2.0pt]
  table[row sep=crcr]{%
0.866	0.000371166775252598\\
0.433	4.40979174079917e-05\\
0.2165	5.19328736302924e-06\\
0.10825	6.28935317092646e-07\\
};
%\addlegendentry{$||\sqrt{c_\mathrm{V}}(p_\mathrm{V}-p_\mathrm{V}^\mathrm{ex})||$}

\addplot [color=green, dashed, line width=2.0pt]
  table[row sep=crcr]{%
0.866	0.00112000971149109\\
0.433	0.000142254817340713\\
0.2165	1.65915572403352e-05\\
0.10825	1.97127725858742e-06\\
};
%\addlegendentry{$||\sqrt{c_\mathrm{E}}(p_\mathrm{E}-p_\mathrm{E}^\mathrm{ex})||$}

\addplot [color=mycolor2, line width=2.0pt]
  table[row sep=crcr]{%
0.866	0.00425642377410364\\
0.433	0.000390888740156458\\
0.2165	4.0199218317584e-05\\
0.10825	6.22645165383717e-06\\
};
%\addlegendentry{$||\boldsymbol{u}-\boldsymbol{u}^\mathrm{ex}||_\mathrm{DG,e}$}

\addplot [color=black, line width=1.5pt]
  table[row sep=crcr]{%
0.200	0.00000285\\
0.141	0.0000010\\
0.200	0.0000010\\
0.200	0.00000285\\
};

\node[right, align=left, text=black, font=\footnotesize]
at (axis cs:0.2,1.6e-6) {$3$};

\end{axis}
\end{tikzpicture}%}
    \end{subfigure}%
    \begin{subfigure}[b]{0.5\textwidth}
        \resizebox{\textwidth}{!}{\definecolor{mycolor1}{rgb}{1.00000,1.00000,0.00000}%
\definecolor{mycolor2}{rgb}{0.00000,1.00000,1.00000}%
\pgfplotsset{
  log x ticks with fixed point/.style={
      xticklabel={
        \pgfkeys{/pgf/fpu=true}
        \pgfmathparse{exp(\tick)}%
        \pgfmathprintnumber[fixed  zerofill, precision=2]{\pgfmathresult}
        \pgfkeys{/pgf/fpu=false}
      }
  }
}

\begin{tikzpicture}

\begin{axis}[%
width=3.875in,
height=2.36in,
at={(2.6in,1.099in)},
scale only axis,
xlabel = {$h$},
log x ticks with fixed point,
xmode=log,
xmin=0.10825,
xmax=0.866,
xminorticks=true,
ymode=log,
ymin=1e-07,
ymax=0.1,
yminorticks=true,
axis background/.style={fill=white},
title style={font=\bfseries},
title={Convergence for $\mathbb{P}_2-\mathbb{P}_2$ elements},
xmajorgrids,
xminorgrids,
ymajorgrids,
yminorgrids,
legend style={legend cell align=left, align=left, draw=white!15!black}
]
\addplot [color=blue, line width=2.0pt]
  table[row sep=crcr]{%
0.866	0.00163260872752766\\
0.433	0.000152542812872126\\
0.2165	1.09707267379105e-05\\
0.10825	8.25799561468643e-07\\
};
%\addlegendentry{$||\sqrt{c_\mathrm{C}}(p_\mathrm{C}-p_\mathrm{C}^\mathrm{ex})||$}

\addplot [color=red, line width=2.0pt]
  table[row sep=crcr]{%
0.866	0.00197734113708458\\
0.433	0.000202026553729202\\
0.2165	1.89901024931966e-05\\
0.10825	2.03544431136528e-06\\
};
%\addlegendentry{$||\sqrt{c_\mathrm{A}}(p_\mathrm{A}-p_\mathrm{A}^\mathrm{ex})||$}

\addplot [color=mycolor1, dashed, line width=2.0pt]
  table[row sep=crcr]{%
0.866	0.00163260872752766\\
0.433	0.000152542812872126\\
0.2165	1.09707267379105e-05\\
0.10825	8.25799561468643e-07\\
};
%\addlegendentry{$||\sqrt{c_\mathrm{V}}(p_\mathrm{V}-p_\mathrm{V}^\mathrm{ex})||$}

\addplot [color=green, dashed, line width=2.0pt]
  table[row sep=crcr]{%
0.866	0.00197734113708458\\
0.433	0.000202026553729202\\
0.2165	1.89901024931966e-05\\
0.10825	2.03544431136528e-06\\
};
%\addlegendentry{$||\sqrt{c_\mathrm{E}}(p_\mathrm{E}-p_\mathrm{E}^\mathrm{ex})||$}

\addplot [color=mycolor2, line width=2.0pt]
  table[row sep=crcr]{%
0.866	0.0197457991206114\\
0.433	0.00369189696822007\\
0.2165	0.000654842461676569\\
0.10825	0.000146925530958431\\
};
%\addlegendentry{$||\boldsymbol{u}-\boldsymbol{u}^\mathrm{ex}||_\mathrm{DG,e}$}

\addplot [color=black, line width=1.5pt]
  table[row sep=crcr]{%
0.200	0.00040\\
0.141	0.00020\\
0.200	0.00020\\
0.200	0.00040\\
};

\node[right, align=left, text=black, font=\footnotesize]
at (axis cs:0.2,2.7e-4) {$2$};

\addplot [color=black, line width=1.5pt]
  table[row sep=crcr]{%
0.200	4.275e-6\\
0.141	1.5e-6\\
0.200	1.5e-6\\
0.200	4.275e-6\\
};

\node[right, align=left, text=black, font=\footnotesize]
at (axis cs:0.2,2.7e-6) {$3$};

\end{axis}
\end{tikzpicture}%}
    \end{subfigure}%
        \\
    \begin{subfigure}[b]{0.5\textwidth}
        \resizebox{\textwidth}{!}{\definecolor{mycolor1}{rgb}{1.00000,1.00000,0.00000}%
\definecolor{mycolor2}{rgb}{0.00000,1.00000,1.00000}%

\pgfplotsset{
  log x ticks with fixed point/.style={
      xticklabel={
        \pgfkeys{/pgf/fpu=true}
        \pgfmathparse{exp(\tick)}%
        \pgfmathprintnumber[fixed  zerofill, precision=2]{\pgfmathresult}
        \pgfkeys{/pgf/fpu=false}
      }
  }
}
  
\begin{tikzpicture}

\begin{axis}[%
width=3.875in,
height=2.36in,
at={(2.6in,1.099in)},
scale only axis,
xmode=log,
xmin=0.10825,
xmax=0.866,
xminorticks=true,
xlabel = {$h$},
log x ticks with fixed point,
ymode=log,
ymin=1e-09,
ymax=0.001,
yminorticks=true,
axis background/.style={fill=white},
title style={font=\bfseries},
title={Convergence for $\mathbb{P}_3-\mathbb{P}_4$ elements},
xmajorgrids,
xminorgrids,
ymajorgrids,
yminorgrids,
legend style={legend cell align=left, align=left, draw=white!15!black}
]
\addplot [color=blue, line width=2.0pt]
  table[row sep=crcr]{%
0.866	4.78646788509183e-05\\
0.433	2.30997561192004e-06\\
0.2165	1.38592878703468e-07\\
0.10825	8.64153806147913e-09\\
};
%\addlegendentry{$||\sqrt{c_\mathrm{C}}(p_\mathrm{C}-p_\mathrm{C}^\mathrm{ex})||$}

\addplot [color=red, line width=2.0pt]
  table[row sep=crcr]{%
0.866	0.000250362295546197\\
0.433	1.60932622025708e-05\\
0.2165	1.01700847576806e-06\\
0.10825	6.40388520733158e-08\\
};
%\addlegendentry{$||\sqrt{c_\mathrm{A}}(p_\mathrm{A}-p_\mathrm{A}^\mathrm{ex})||$}

\addplot [color=mycolor1, dashed, line width=2.0pt]
  table[row sep=crcr]{%
0.866	4.78646788509183e-05\\
0.433	2.30997561192004e-06\\
0.2165	1.38592878703468e-07\\
0.10825	8.64153806147913e-09\\
};
%\addlegendentry{$||\sqrt{c_\mathrm{V}}(p_\mathrm{V}-p_\mathrm{V}^\mathrm{ex})||$}

\addplot [color=green, dashed, line width=2.0pt]
  table[row sep=crcr]{%
0.866	0.000250362295546197\\
0.433	1.60932622025708e-05\\
0.2165	1.01700847576806e-06\\
0.10825	6.40388520733158e-08\\
};
%\addlegendentry{$||\sqrt{c_\mathrm{E}}(p_\mathrm{E}-p_\mathrm{E}^\mathrm{ex})||$}

\addplot [color=mycolor2, line width=2.0pt]
  table[row sep=crcr]{%
0.866	0.000779425150632801\\
0.433	3.44245405609284e-05\\
0.2165	2.00311847641031e-06\\
0.10825	1.6272556029528e-07\\
};
%\addlegendentry{$||\boldsymbol{u}-\boldsymbol{u}^\mathrm{ex}||_\mathrm{DG,e}$}

\addplot [color=black, line width=1.5pt]
  table[row sep=crcr]{%
0.200	4.e-8\\
0.141	1.e-8\\
0.200	1.e-8\\
0.200	4.e-8\\
};

\node[right, align=left, text=black, font=\footnotesize]
at (axis cs:0.2,2e-8) {$4$};

\end{axis}
\end{tikzpicture}%}
    \end{subfigure}%
    \begin{subfigure}[b]{0.5\textwidth}
        \resizebox{\textwidth}{!}{\definecolor{mycolor1}{rgb}{1.00000,1.00000,0.00000}%
\definecolor{mycolor2}{rgb}{0.00000,1.00000,1.00000}%

\pgfplotsset{
  log x ticks with fixed point/.style={
      xticklabel={
        \pgfkeys{/pgf/fpu=true}
        \pgfmathparse{exp(\tick)}%
        \pgfmathprintnumber[fixed  zerofill, precision=2]{\pgfmathresult}
        \pgfkeys{/pgf/fpu=false}
      }
  }
}
  
\begin{tikzpicture}

\begin{axis}[%
width=3.875in,
height=2.36in,
at={(2.6in,1.099in)},
scale only axis,
xmode=log,
xmin=0.10825,
xmax=0.866,
xminorticks=true,
xlabel = {$h$},
log x ticks with fixed point,
ymode=log,
ymin=1e-08,
ymax=0.01,
yminorticks=true,
axis background/.style={fill=white},
title style={font=\bfseries},
title={Convergence for $\mathbb{P}_3-\mathbb{P}_3$ elements},
xmajorgrids,
xminorgrids,
ymajorgrids,
yminorgrids,
legend style={legend cell align=left, align=left, draw=white!15!black}
]
\addplot [color=blue, line width=2.0pt]
  table[row sep=crcr]{%
0.866	0.000110591044109455\\
0.433	9.61221892921952e-06\\
0.2165	7.00544933816919e-07\\
0.10825	4.55688163961507e-08\\
};
%\addlegendentry{$||\sqrt{c_\mathrm{C}}(p_\mathrm{C}-p_\mathrm{C}^\mathrm{ex})||$}

\addplot [color=red, line width=2.0pt]
  table[row sep=crcr]{%
0.866	0.000258619566063748\\
0.433	2.12795801403297e-05\\
0.2165	1.46003197499181e-06\\
0.10825	9.35829628772726e-08\\
};
%\addlegendentry{$||\sqrt{c_\mathrm{A}}(p_\mathrm{A}-p_\mathrm{A}^\mathrm{ex})||$}

\addplot [color=mycolor1, dashed, line width=2.0pt]
  table[row sep=crcr]{%
0.866	0.000110591044109455\\
0.433	9.61221892921952e-06\\
0.2165	7.00544933816919e-07\\
0.10825	4.55688163961507e-08\\
};
%\addlegendentry{$||\sqrt{c_\mathrm{V}}(p_\mathrm{V}-p_\mathrm{V}^\mathrm{ex})||$}

\addplot [color=green, dashed, line width=2.0pt]
  table[row sep=crcr]{%
0.866	0.000258619566063748\\
0.433	2.12795801403297e-05\\
0.2165	1.46003197499181e-06\\
0.10825	9.35829628772726e-08\\
};
%\addlegendentry{$||\sqrt{c_\mathrm{E}}(p_\mathrm{E}-p_\mathrm{E}^\mathrm{ex})||$}

\addplot [color=mycolor2, line width=2.0pt]
  table[row sep=crcr]{%
0.866	0.00425581540416215\\
0.433	0.000390853883294224\\
0.2165	4.03016734332472e-05\\
0.10825	6.21668456485365e-06\\
};
%\addlegendentry{$||\boldsymbol{u}-\boldsymbol{u}^\mathrm{ex}||_\mathrm{DG,e}$}

\addplot [color=black, line width=1.5pt]
  table[row sep=crcr]{%
0.200	2.4e-5\\
0.141	8.e-6\\
0.200	8.e-6\\
0.200	2.5e-5\\
};

\node[right, align=left, text=black, font=\footnotesize]
at (axis cs:0.2,1.2e-5) {$3$};

\addplot [color=black, line width=1.5pt]
  table[row sep=crcr]{%
0.200	3.2e-7\\
0.141	8.e-8\\
0.200	8.e-8\\
0.200	3.2e-7\\
};

\node[right, align=left, text=black, font=\footnotesize]
at (axis cs:0.2,1.5e-7) {$4$};

\end{axis}
\end{tikzpicture}%}
    \end{subfigure}%
    \caption{Test case 1: computed errors and convergence rates.}
    \label{fig:errors3D}
\end{figure}
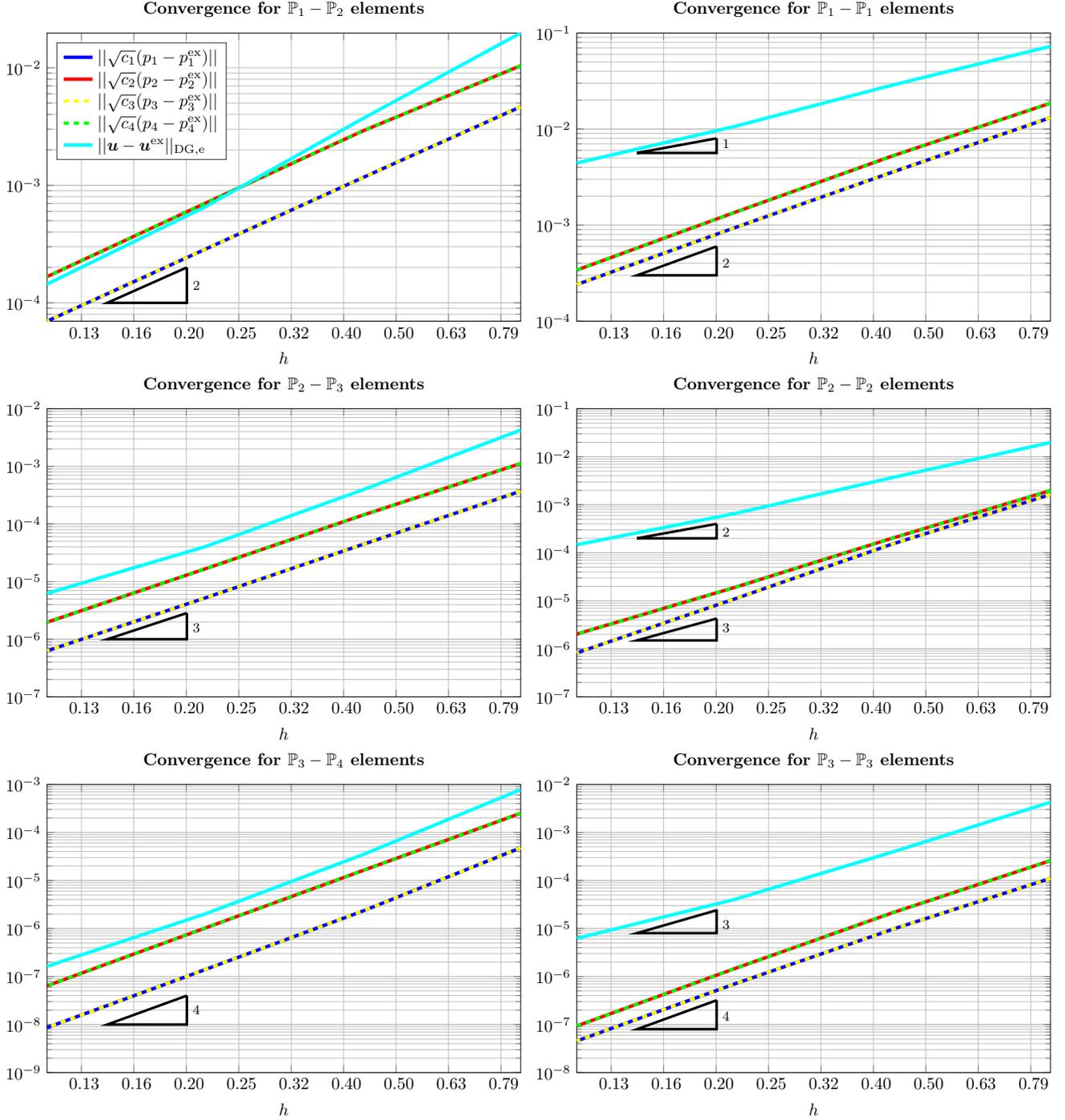
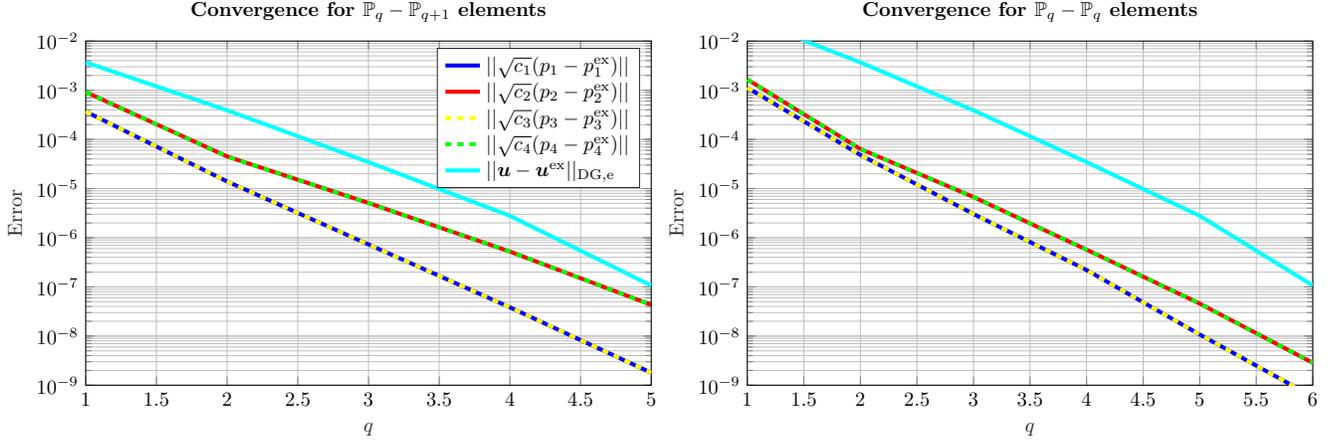
\begin{figure}[t]
    \begin{subfigure}[b]{0.5\textwidth}
          \resizebox{\textwidth}{!}{\definecolor{mycolor1}{rgb}{1.00000,1.00000,0.00000}%
\definecolor{mycolor2}{rgb}{0.00000,1.00000,1.00000}%

\begin{tikzpicture}
\begin{axis}[%
width=3.875in,
height=2.36in,
at={(1.733in,0.687in)},
scale only axis,
xmin=1,
xmax=5,
xlabel style={font=\color{white!15!black}},
xlabel={$q$},
ymode=log,
ymin=1e-09,
ymax=0.01,
yminorticks=true,
ylabel style={font=\color{white!15!black}},
ylabel={Error},
axis background/.style={fill=white},
title style={font=\bfseries},title={Convergence for $\mathbb{P}_q-\mathbb{P}_{q+1}$ elements},
xmajorgrids,
ymajorgrids,
yminorgrids,
legend style={legend cell align=left, align=left, draw=white!15!black}
]

\addplot [color=blue, line width=2.0pt]
  table[row sep=crcr]{%
1	0.000367667314229397\\
2	1.39449859084474e-05\\
3	7.30478427310854e-07\\
4	3.83633420940584e-08\\
5	1.79205436963207e-09\\
};
\addlegendentry{$||\sqrt{c_1}(p_1-p_1^\mathrm{ex})||$}

\addplot [color=red, line width=2.0pt]
  table[row sep=crcr]{%
1	0.000916776284214138\\
2	4.49849230924801e-05\\
3	5.08913635424217e-06\\
4	5.2270277875314e-07\\
5	4.31567355687846e-08\\
};
\addlegendentry{$||\sqrt{c_2}(p_2-p_2^\mathrm{ex})||$}

\addplot [color=mycolor1, dashed, line width=2.0pt]
  table[row sep=crcr]{%
1	0.000367667314229397\\
2	1.39449859084474e-05\\
3	7.30478427310854e-07\\
4	3.83633420940584e-08\\
5	1.79205436963207e-09\\
};
\addlegendentry{$||\sqrt{c_3}(p_3-p_3^\mathrm{ex})||$}

\addplot [color=green, dashed, line width=2.0pt]
  table[row sep=crcr]{%
1	0.000916776284214138\\
2	4.49849230924801e-05\\
3	5.08913635424217e-06\\
4	5.2270277875314e-07\\
5	4.31567355687846e-08\\
};
\addlegendentry{$||\sqrt{c_4}(p_4-p_4^\mathrm{ex})||$}

\addplot [color=mycolor2, line width=2.0pt]
  table[row sep=crcr]{%
1	0.00369212007243042\\
2	0.000390888740156458\\
3	3.44245405609284e-05\\
4	2.7960386190559e-06\\
5	1.06549047051629e-07\\
};

\addlegendentry{$||\boldsymbol{u}-\boldsymbol{u}^\mathrm{ex}||_\mathrm{DG,e}$}

\end{axis}
\end{tikzpicture}%}
    \end{subfigure}%
    \begin{subfigure}[b]{0.5\textwidth}
        \resizebox{\textwidth}{!}{\definecolor{mycolor1}{rgb}{1.00000,1.00000,0.00000}%
\definecolor{mycolor2}{rgb}{0.00000,1.00000,1.00000}%

\begin{tikzpicture}
\begin{axis}[%
width=3.875in,
height=2.36in,
at={(1.733in,0.687in)},
scale only axis,
xmin=1,
xmax=6,
xlabel style={font=\color{white!15!black}},
xlabel={$q$},
ymode=log,
ymin=1e-09,
ymax=0.01,
yminorticks=true,
ylabel style={font=\color{white!15!black}},
ylabel={Error},
axis background/.style={fill=white},
title style={font=\bfseries},title={Convergence for $\mathbb{P}_q-\mathbb{P}_q$ elements},
xmajorgrids,
ymajorgrids,
yminorgrids,
legend style={legend cell align=left, align=left, draw=white!15!black}
]

\addplot [color=blue, line width=2.0pt]
  table[row sep=crcr]{%
1	0.00112845948098307\\
2	4.82382729364769e-05\\
3	3.03965051845185e-06\\
4	2.20158232403993e-07\\
5	1.07610341039862e-08\\
6   5.8430102909049e-10\\
};
%\addlegendentry{$||\sqrt{c_\mathrm{C}}(p_\mathrm{C}-p_\mathrm{C}^\mathrm{ex})||$}

\addplot [color=red, line width=2.0pt]
  table[row sep=crcr]{%
1	0.00166170897243361\\
2	6.38864057618663e-05\\
3	6.72919408955274e-06\\
4	5.70859635091825e-07\\
5	4.57606540833087e-08\\
6   2.84961425004106e-09\\
};
%\addlegendentry{$||\sqrt{c_\mathrm{A}}(p_\mathrm{A}-p_\mathrm{A}^\mathrm{ex})||$}

\addplot [color=mycolor1, dashed, line width=2.0pt]
  table[row sep=crcr]{%
1	0.00112845948098307\\
2	4.82382729364769e-05\\
3	3.03965051845185e-06\\
4	2.20158232403993e-07\\
5	1.07610341039862e-08\\
6   5.8430102909049e-10\\
};
%\addlegendentry{$||\sqrt{c_\mathrm{V}}(p_\mathrm{V}-p_\mathrm{V}^\mathrm{ex})||$}

\addplot [color=green, dashed, line width=2.0pt]
  table[row sep=crcr]{%
1	0.00166170897243361\\
2	6.38864057618663e-05\\
3	6.72919408955274e-06\\
4	5.70859635091825e-07\\
5	4.57606540833087e-08\\
6   2.84961425004106e-09\\
};
%\addlegendentry{$||\sqrt{c_\mathrm{E}}(p_\mathrm{E}-p_\mathrm{E}^\mathrm{ex})||$}

\addplot [color=mycolor2, line width=2.0pt]
  table[row sep=crcr]{%
1	0.0287159386248445\\
2	0.00369189696822007\\
3	0.000390853883294224\\
4	3.44476290335661e-05\\
5	2.7974495845470282e-06\\
6   1.069830441387346e-07\\
};

%\addlegendentry{$||\boldsymbol{u}-\boldsymbol{u}^\mathrm{ex}||_\mathrm{DG,e}$}

\end{axis}
\end{tikzpicture}%}
    \end{subfigure}%
    \caption{Test case 1: computed errors against the order of DG approximation.}
    \label{fig:errorsp3D}
\end{figure}
\par
In Table \ref{tab:errors3D} we report the computed errors in both the DG and $L^2$ norms, together with the computed rates of convergence (roc) as a function of the mesh size $h$. The results reported in Table \ref{tab:errors3D} left (right) have been obtained with polynomials of degree $q=1,2,3$ for the pressure fields and with polynomials of degree $q$ ($q+1$) for the displacement one. We observe that the theoretical rates of convergence are respected both in the cases of $\mathbb{P}_q+\mathbb{P}_{q+1}$ elements and $\mathbb{P}_q+\mathbb{P}_{q}$ ones. Indeed, the rate of convergence of the displacement in $\mathrm{DG}-$norm is exactly the degree of approximation of the displacement in all the cases. At the same time, the $L^2-$norm rates of convergence for the pressures are equal to $q+1$. This fact is coherent with our energy stability estimate, in the first case, while we observe a superconvergence in the case of $L^2-$norm in pressure with $\mathbb{P}_q-\mathbb{P}_q$ elements. The rate of convergence can be observed also in Figure \ref{fig:errors3D}.
\par
A convergence analysis concerning the order of discretization is also performed. The results are reported in Figure \ref{fig:errorsp3D}.
\subsection{Test case 2: convergence analysis on 2D polygonal grids}
\begin{table}[t]
	\centering
	\begin{tabular}{|c|r l|c|r l|}
		\hline
		\textbf{Parameter} & \multicolumn{2}{c|}{\textbf{Value}} &	\textbf{Parameter} & \multicolumn{2}{c|}{\textbf{Value}} \\ 
		\hline 
		$\rho$                  &  $1000.00$       & $[\mathrm{Kg/m^3}]$ 
		& $k_1=k_2$    &  $3.50\times10^{-11}$          & $[\mathrm{m^2}]$ \\ 
		\hline 
		$\lambda$               & $505.00$        & $[\mathrm{Pa}]$  
		& $\mu$         & $216.00$        & $[\mathrm{Pa}]$\\ 
		\hline 
		 $\mu_1=\mu_2$                  & $3.50\times10^{-3}$        & $[\mathrm{Pa\cdot s}]$  
		& $\beta_{12}$	    & $10^{-7}$        & $[\mathrm{m^2/(N\cdot s)}]$  \\
		\hline 
		  $\alpha_1$        & $0.49$         & $[-]$  
		& $\alpha_2$	    & $0.51$        & $[-]$  \\ 
		\hline 
		$c_1=c_2$   	    & $10^{-6}$ & $[\mathrm{m^2/N}]$  &
		$\beta_1^{\mathrm{e}}=\beta_2^{\mathrm{e}}$
		& $0.00$ & $[\mathrm{m^2/(N\cdot s)}]$  \\  
		\hline 
	\end{tabular}
	\caption{Physical parameter values used in the 2D brain simulation.}
	\label{tab:2Dparam}
\end{table}
\begin{figure}[t!]
	\centering
	{\includegraphics[width=\textwidth]{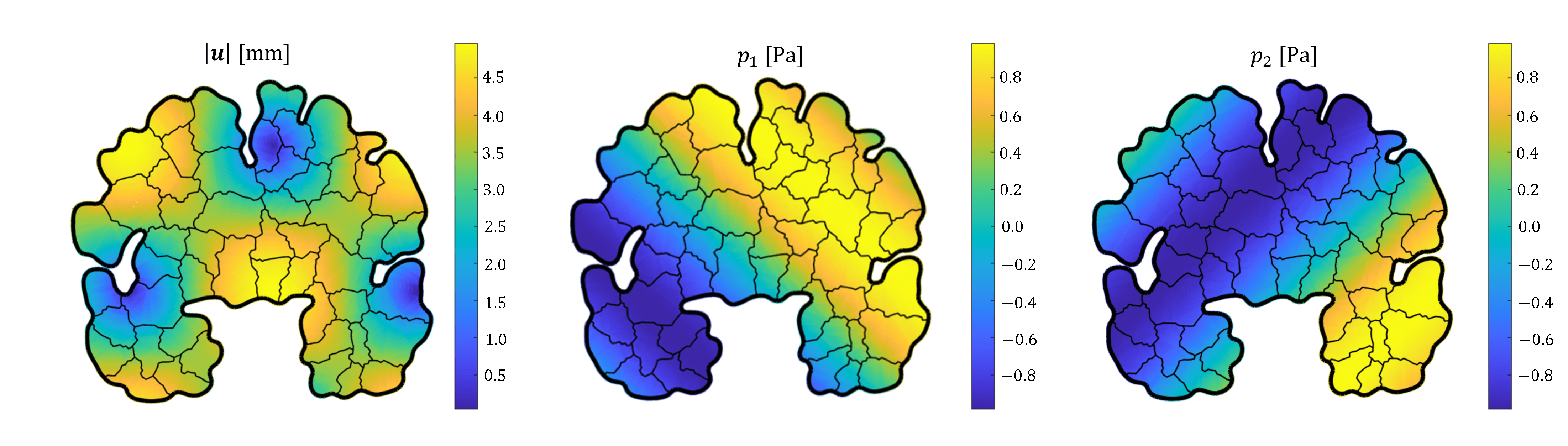}}
	\caption{Test case 2: computed solution (PolyDG of order 6) at the final time}
	\label{fig:AggSol}
\end{figure}
The first numerical test we perform is a two-dimensional setting on a polygonal agglomerated grid. Starting from structural Magnetic Resonance Images (MRI) of a brain from the OASIS-3 database \cite{OASIS3} we segment the brain by means of Freesurfer \cite{Freesurfer,Freesurfer:site}. After that, we construct a mesh of a slice of the brain along the frontal plane by means of VMTK \cite{VMTK:antiga}.
\par
The triangular resulting mesh is composed of $14\,372$ triangles. However, the generality of the PolyDG method allows us to use mesh elements of any shape, for this reason, we agglomerate the mesh by using ParMETIS \cite{Parmetis} and we obtain a polygonal mesh of 51 elements, as shown in Figure~\ref{fig:AggSol}. This mesh is then used to perform a convergence analysis, by varying the polynomial order.
\par
To test the convergence we consider the following exact solution for a case with two pressure fields $(|J|=2)$:
\begin{equation}
    \begin{matrix}
    \boldsymbol{u}(x,y,t)=\sin(\pi t)
    \begin{bmatrix}
    -\cos(\pi x)\cos(\pi y) \\
    \sin(\pi x)\sin(\pi y) 
    \end{bmatrix},
    \\[8pt]
    p_1(x,y,t)=10^4\pi\sin(\pi t)(\cos(\pi y)\sin(\pi x) + \cos(\pi x)\sin(\pi y)), \\[8pt]
    p_2(x,y,t)=10^4\pi\sin(\pi t)(\cos(\pi y)\sin(\pi x) - \cos(\pi x)\sin(\pi y)).
    \end{matrix}
\end{equation}
Concerning the time discretization, we use a timestep $\Delta t = 10^{-7}$ and a maximum time $T = 10^{-5}$. The forcing terms are then constructed to fulfil the continuous problem. The simulation is performed considering isotropic permeability $\mathbf{K}_j = k_j \mathbf{I}$ for $j=1,2$ and the values of the physical parameters used in this simulation are reported in Table \ref{tab:2Dparam}. These values are chosen to be comparable in dimensions to the parameters used in patient-specific simulations in literature \cite{guoSubjectspecificMultiporoelasticModel2018,vardakisFluidStructureInteraction2019}.
\par
In Figure~\ref{fig:AggSol}, we report the computed solution using the PolyDG of order 6 both in displacement and pressures at the final time. We can notice that the exact solution is smoothly approximated using the polygonal mesh. Unless the mesh contains few elements we are able to achieve the solution. 
\par
We report in Figure~\ref{fig:AggConv}, the convergence results for this test case. We can observe a spectral convergence increasing the polynomial order $q$. Finally, we observe that after $q=5$ we arrive to the lower bound of the pressure errors. This is due to the temporal discretization timestep we choose and it is coherent with the theory of space-time discretization errors.

\begin{figure}[t!]
    \centering
    \resizebox{0.75\textwidth}{!}{\definecolor{mycolor1}{rgb}{1.00000,1.00000,0.00000}%
\definecolor{mycolor2}{rgb}{0.00000,1.00000,1.00000}%

\begin{tikzpicture}
\begin{axis}[%
width=4.875in,
height=2.15in,
at={(1.733in,0.687in)},
scale only axis,
xmin=2,
xmax=6,
xlabel style={font=\color{white!15!black}},
xlabel={$q$},
ymode=log,
ymin=1e-07,
ymax=0.05,
yminorticks=true,
ylabel style={font=\color{white!15!black}},
ylabel={Error},
axis background/.style={fill=white},
title style={font=\bfseries},title={Convergence for $\mathbb{P}_q-\mathbb{P}_{q}$ elements},
xmajorgrids,
ymajorgrids,
yminorgrids,
legend style={legend cell align=left, align=left, draw=white!15!black},
xtick={2,3,4,5,6}
]

\addplot [color=blue, line width=1.5pt]
  table[row sep=crcr]{%
1   0.036895833555724 \\    % 0.0369744548351437	  \\
2   0.008692795289993 \\    % 0.00883782394986102  \\
3   0.001008998275595 \\    % 0.000980787414724332 \\	
4   5.94990000000e-05 \\    % 6.67334014684130e-05 \\
5   5.59320000000e-06 \\    % 1.03749012212014e-05 \\	
6   5.89410000000e-06 \\    % 1.02327888943873e-05 \\
};
\addlegendentry{$||p_{1}-p_{1}^\mathrm{ex}||$}

\addplot [color=red, line width=1.5pt]
  table[row sep=crcr]{%
1   0.038421530520484   \\   % 0.0382606913793226 \\
2   0.006491376554168   \\   % 0.00677193766562688	\\
3   0.001547955835478   \\   % 0.00152055495064367	\\
4   7.42020000000e-05   \\   % 8.73968426202830e-05 \\
5   6.44960000000e-06   \\   % 1.11804004244014e-05 \\
6   5.79300000000e-06   \\   % 1.06454156788494e-05  \\
};
\addlegendentry{$||p_{2}-p_{2}^\mathrm{ex}||$}

\addplot [color=mycolor2, line width=1.5pt]
  table[row sep=crcr]{%
1	1.894582847587361e-05 \\  % 0.000349884530561731 \\
2	3.921420805640827e-05 \\  % 0.000724114391589837 \\
3	1.228202992399341e-05 \\  % 0.000226785508939886 \\
4	2.523400000000000e-06 \\  % 4.65907139956224e-05 \\
5	2.924000000000000e-07 \\  % 5.36263119948006e-06 \\
6   9.670000000000000e-08 \\  % 4.99573280824832e-07 \\
};

\addlegendentry{$||\boldsymbol{u}-\boldsymbol{u}^\mathrm{ex}||_\mathrm{DG,e}$}

\end{axis}
\end{tikzpicture}%}
    \caption{Test case 2: computed errors against the order of PolyDG approximation on the brain section.}
	\label{fig:AggConv}
\end{figure}
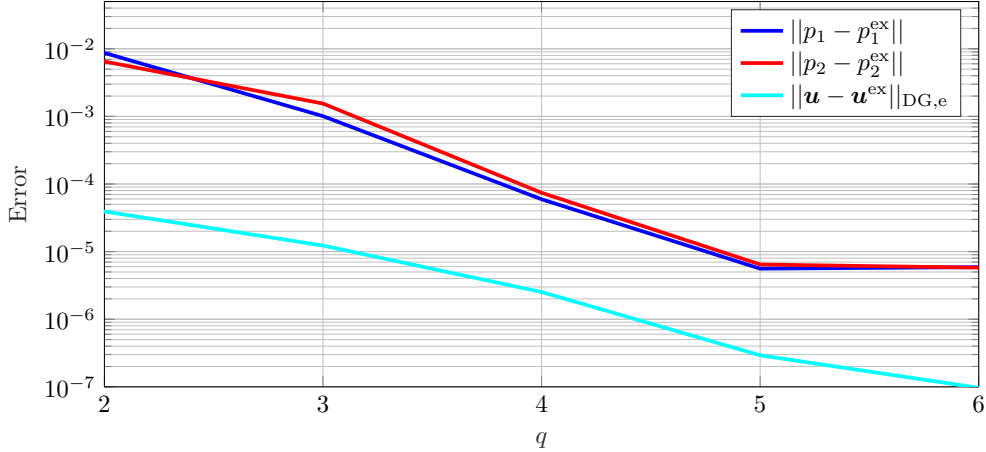

\subsection{Test case 3: simulation on a brain geometry}
Finally, we perform a three-dimensional simulation starting from a structural MRI from the OASIS-3 database \cite{OASIS3} and we segment it employing Freesurfer \cite{Freesurfer,Freesurfer:site} and 3DSlicer \cite{3DSilcer,3DSlicer:site}. Finally, the mesh is constructed using the SVMTK library \cite{Mardal:Mesh}. The tetrahedral resulting mesh is composed of 81'194 elements. The problem is solved by means of a code implemented in FEniCS finite element software \cite{FenicsCode} (version 2019).
\par
In this case we refer to the mathematical modelling of \cite{tullyCerebralWaterTransport2011}, which proposed the simulation of four different fluid networks: arterial blood ($\mathrm{A}$), capillary blood ($\mathrm{C}$), venous blood ($\mathrm{V}$) and cerebrospinal/extracellular fluid ($\mathrm{E}$). In this context, the boundary condition are constructed by dividing the boundary of the domain into the ventricular boundary $\Gamma_\mathrm{Vent}$ and the skull $\Gamma_\mathrm{Skull}$, as visible in Figure \ref{fig:mesh}.
\begin{figure}[t]
	\centering
	{\includegraphics[width=\textwidth]{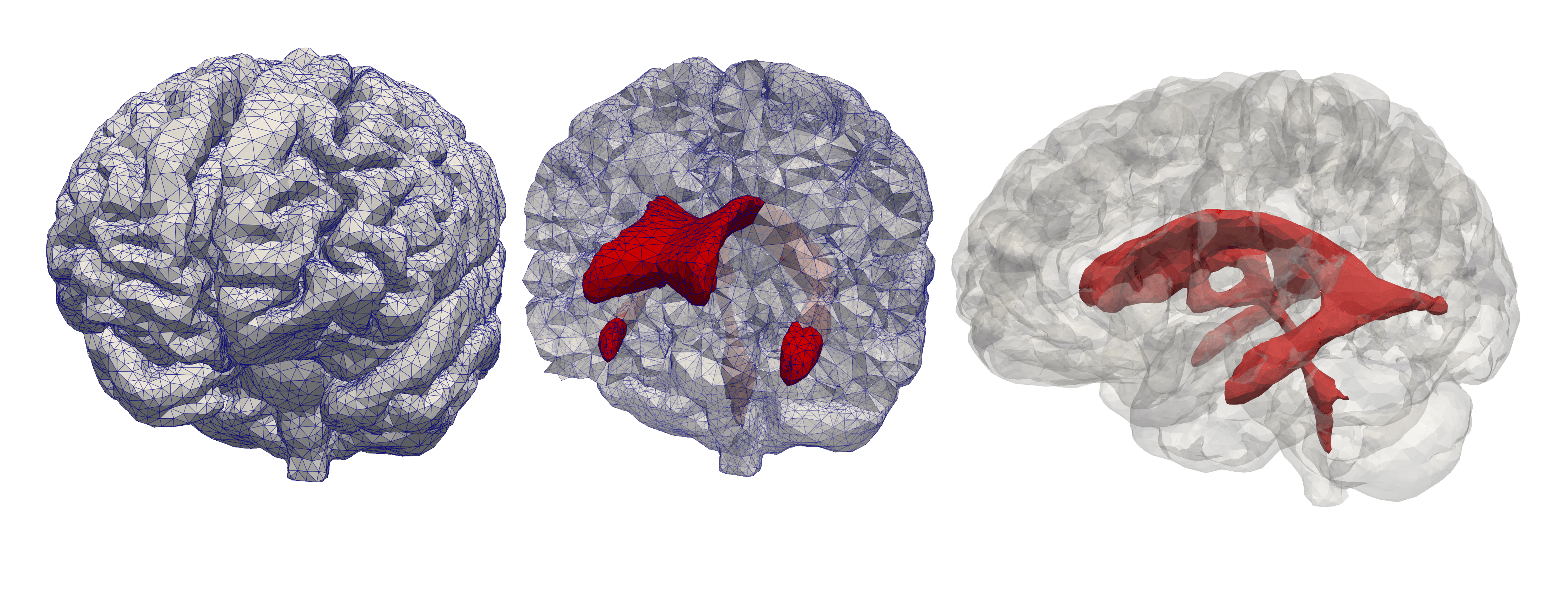}}
	\caption{Test case 3: brain 3D mesh. An external view of the mesh (on the left), an internal view with the ventricles in red (in the middle) and a visualization of ventricles boundary in red and skull in trasparency (on the right).}
	\label{fig:mesh}
\end{figure}
\par
The discretization is based on a DG method in space with polynomials of order 2 for each solution field. Moreover, we apply a temporal discretization with $\Delta t = 10^{-3}\;\mathrm{s}$ by considering a heartbeat of duration $1 \mathrm{s}$. Indeed, we apply periodic boundary conditions to the problem. Concerning the elastic movement, we consider a fixed skull, while the ventricles boundary can deform under the stress of the CSF inside the ventricles:
\begin{equation}
    \boldsymbol{u}=\boldsymbol{0}\quad\mathrm{on}\;\Gamma_\mathrm{Skull}, \qquad \boldsymbol{\sigma}_{\mathrm{E}}(\boldsymbol{u})\boldsymbol{n} - \sum_{j\in J} \alpha_j p_j \boldsymbol{n} = - \tilde{p}_\mathrm{E}^\mathrm{Vent} \boldsymbol{n}\quad\mathrm{on}\;\Gamma_\mathrm{Vent}.
\end{equation}
%% REFUSO NEL PAPER SOTTOMESSO
Concerning the arterial blood, we impose sinusoidal pressure on the skull, to mimic the pressure variations due to the heartbeat, with a mean value of $70 \mathrm{mmHg}$. At the same time, we do not allow inflow/outflow of blood from the ventricular boundary:
\begin{equation}
p_\mathrm{A}=70+10\sin(2\pi t)\;\mathrm{mmHg},\quad\mathrm{on}\;\Gamma_\mathrm{Skull}, \qquad \nabla p_\mathrm{A}\cdot\boldsymbol{n} = \boldsymbol{0},\quad\mathrm{on}\;\Gamma_\mathrm{Vent}.
\end{equation}
%% REFUSO NEL PAPER SOTTOMESSO
Concerning the capillary blood, we do not allow any inflow/outflow of blood from the boundary:
\begin{equation}
\nabla p_\mathrm{C}\cdot\boldsymbol{n} = \boldsymbol{0},\quad\mathrm{on}\;\partial \Omega.
\end{equation}
For the venous blood, we impose the pressure value at the boundary:
\begin{equation}
p_\mathrm{V} = 6\;\mathrm{mmHg},\quad\mathrm{on}\;\partial \Omega.
\end{equation}
Finally, we assume that the CSF can flow from the parenchyma to the ventricles and we impose a pulsatility around a baseline of $5\;\mathrm{mmHg}$. Indeed we impose:
\begin{equation}
p_\mathrm{E}=5+(2+0.012)\sin(2\pi t)\;\mathrm{mmHg}\quad\mathrm{on}\;\Gamma_\mathrm{Vent}, \qquad p_\mathrm{E}=5+2\sin(2\pi t)\;\mathrm{mmHg}\quad\mathrm{on}\;\Gamma_\mathrm{Skull}.
\end{equation}
\begin{figure}[t]
	\centering
	{\includegraphics[width=\textwidth]{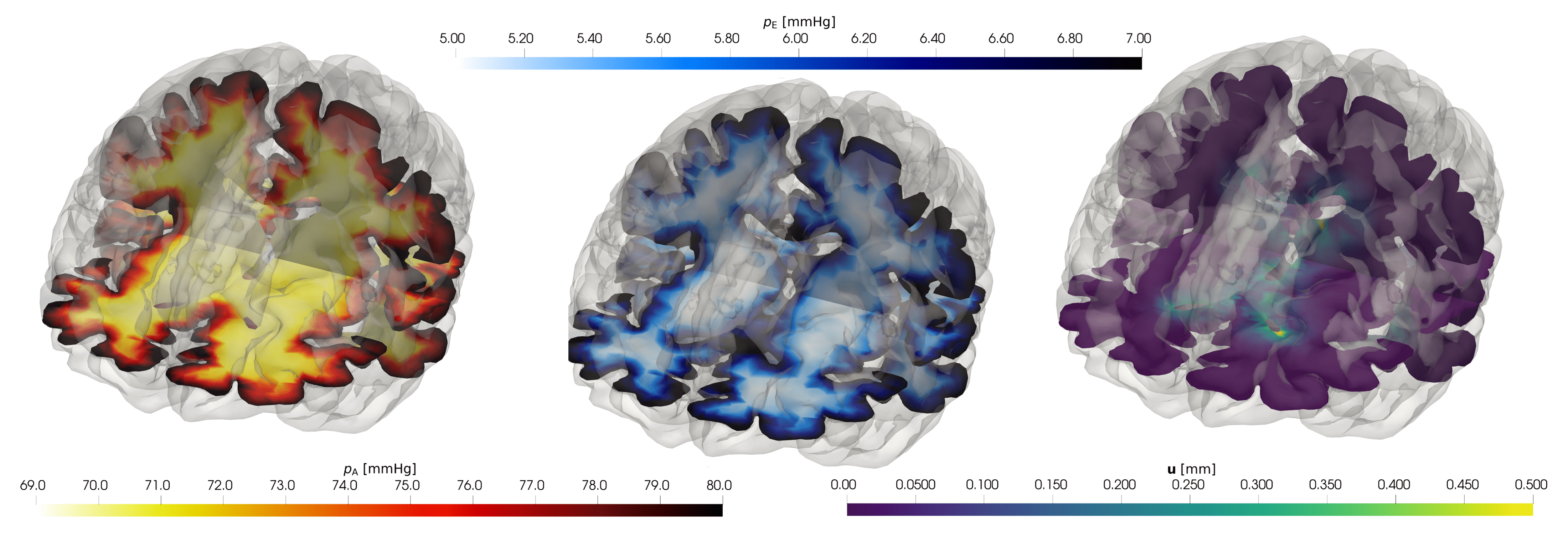}}
	\caption{Test case 3: solution of the MPET dynamic system in the patient-specific geometry at $t=0.25\;\mathrm{s}$. From left to right: $p_\mathrm{A}$, $p_\mathrm{E}$ and $|\boldsymbol{u}|$.}
	\label{fig:PSS}
\end{figure}
We add the discharge term to the venous pressure equation with a parameter $\beta_\mathrm{V}^\mathrm{e}=10^{-6}\;\mathrm{m^2/(N\cdot s)}$ and we consider an external veins pressure $\tilde{p}_\mathrm{Veins}=6\;\mathrm{mmHg}$. To solve the algebraic resulting problem, we apply a monolithic strategy, by using an iterative GMRES method, with a SOR preconditioner.
\par
In Figure \ref{fig:PSS} we report the numerical solution computed at time $t=0.25 \mathrm{s}$ with the parameters from \cite{lee_mixed_2019}. As we can observe, we obtain maximum values of displacement on the ventricular boundary. The pressure values obtained are coherent with the imposed boundary conditions and the largest gradients are related to the arterial compartment. Concerning venous and capillary pressures, we do not report the maps of values inside the brain. Indeed, the computed values are near to spatially constant at $6\;\mathrm{mmHg}$ and $38\;\mathrm{mmHg}$, respectively. This is coherent to what is found in similar studies \cite{vardakis_exploring_2020}.
\section{Conclusions}
\label{sec:conclusion}
In this work, we have introduced a numerical polyhedral discontinuous Galerkin method for the solution of the dynamic multiple-network poroelastic model in the dynamic version. We derived stability and convergence error estimates for arbitrary-order approximation. Moreover, we proposed a temporal discretization based on the coupling of Newmark $\beta$-method for the second order equation and $\theta$-method for the first order equations.
\par
The numerical convergence tests were presented both in two and three dimensions. In particular, we presented a test on a slice of brain, with an agglomerated polygonal mesh. These tests confirmed the theoretical results of our analysis and the possibility to use this formulation to solve the problem on coarse polygonal meshes. Finally, we performed a numerical simulation on a real 3D brain geometry.
\section*{Declaration of competing interests}
The authors declare that they have no known competing financial interests or personal relationships that could have appeared to influence the work reported in this article.

\section*{Acknowledgments}
PFA has been partially funded by the research grant PRIN2017 n. 201744KLJL. PFA, LD and AMQ has been partially funded by the research grant PRIN2020 n. 20204LN5N5 funded by the Italian Ministry of Universities and Research (MUR). MC, PFA, LD and AMQ are members of INdAM-GNCS. The brain MRI images were provided by OASIS-3: Longitudinal Multimodal Neuroimaging: Principal Investigators: T. Benzinger, D. Marcus, J. Morris; NIH P30 AG066444, P50 AG00561, P30 NS09857781, P01 AG026276, P01 AG003991, R01 AG043434, UL1 TR000448, R01 EB009352. AV-45 doses were provided by Avid Radiopharmaceuticals, a wholly owned subsidiary of Eli Lilly.

\appendix
\section{Derivation of the weak formulation}
\label{sec:wfder}
Considering the momentum equation we can introduce a test function $\boldsymbol{v}\in V$ and write the weak formulation:
\begin{equation}
    \int_{\Omega}\rho\dfrac{\partial^2\boldsymbol{u}}{\partial t^2}\cdot\boldsymbol{v}
    - \int_{\Omega}\nabla\cdot\boldsymbol{\sigma}_\mathrm{E}(\boldsymbol{u})\cdot\boldsymbol{v} +
    \sum_{k\in J} \int_{\Omega}\alpha_k\nabla p_k\cdot\boldsymbol{v} = \int_\Omega\boldsymbol{f}\cdot\boldsymbol{v} \qquad \forall\boldsymbol{v}\in V.
\end{equation}
Each component can be treated separately to reach:
\begin{equation*}
    \begin{split}
    \int_{\Omega}\rho\dfrac{\partial^2\boldsymbol{u}}{\partial t^2}\cdot\boldsymbol{v} = &
    \rho\left(\dfrac{\partial^2\boldsymbol{u}}{\partial t^2},\boldsymbol{v}\right)_\Omega, \\[12pt]
   \int_{\Omega}\nabla\cdot\boldsymbol{\sigma}_\mathrm{E}(\boldsymbol{u})\cdot\boldsymbol{v} = &
   -\int_{\Omega}\boldsymbol{\sigma}_\mathrm{E}(\boldsymbol{u}):\nabla\boldsymbol{v} + \int_{\partial\Omega}(\boldsymbol{\sigma}_\mathrm{E}(\boldsymbol{u})\cdot\boldsymbol{n})\cdot\boldsymbol{v}\mathrm{d}\sigma = \\
   = & -2\mu\int_{\Omega}\boldsymbol{\varepsilon}(\boldsymbol{u}):\nabla\boldsymbol{v} -\lambda\int_{\Omega}(\nabla\cdot\boldsymbol{u})(\nabla\cdot\boldsymbol{v}) + \int_{\Gamma_N} \boldsymbol{\sigma}_\mathrm{E}(\boldsymbol{u})\cdot\boldsymbol{n}\cdot\boldsymbol{v}\mathrm{d}\sigma \\
    = & -2\mu\left(\boldsymbol{\varepsilon}(\boldsymbol{u}),\boldsymbol{\varepsilon}(\boldsymbol{v})\right)_\Omega -\lambda\left(\nabla\cdot\boldsymbol{u},\nabla\cdot\boldsymbol{v}\right)_\Omega + (\boldsymbol{\sigma}_\mathrm{E}(\boldsymbol{u})\cdot\boldsymbol{n},\boldsymbol{v})_{\Gamma_N}, \\[12pt]
    \int_{\Omega}\alpha_k\nabla p_k\cdot\boldsymbol{v} = &
    - \int_{\Omega}\alpha_k p_k(\nabla\cdot\boldsymbol{v})
    + \int_{\partial\Omega}\alpha_k (p_k\boldsymbol{n}\cdot\boldsymbol{v})\mathrm{d}\sigma \\
    = & -\left(\alpha_k p_k,\nabla\cdot\boldsymbol{v} \right)_\Omega + \alpha_k (p_k\boldsymbol{n},\boldsymbol{v})_{\Gamma_N}, \\[12pt]
    \int_\Omega\boldsymbol{f}\cdot\boldsymbol{v} = (\boldsymbol{f},\boldsymbol{v})_\Omega.
    \end{split}
\end{equation*}
Finally, exploiting the Neumann boundary condition on the boundary $\Gamma_N$ for the momentum equation, we have:
\begin{equation*}
    (\boldsymbol{\sigma}_\mathrm{E}(\boldsymbol{u})\cdot\boldsymbol{n},\boldsymbol{v})_{\Gamma_N}-\sum_{k\in J}\alpha_k (p_k\boldsymbol{n},\boldsymbol{v})_{\Gamma_N} = \left(\boldsymbol{\sigma}_\mathrm{E}(\boldsymbol{u})\cdot\boldsymbol{n}-\sum_{k\in J}\alpha_k p_k\boldsymbol{n},\boldsymbol{v}\right)_{\Gamma_N} = \left(\boldsymbol{h}_{\boldsymbol{u}},\boldsymbol{v}\right)_{\Gamma_N},
\end{equation*}
this identity leads us to the final weak formulation of the momentum equation:
\begin{equation}
\label{eq:weakmom}
\begin{split}
     \rho\left(\dfrac{\partial^2\boldsymbol{u}}{\partial t^2},\boldsymbol{v}\right)_\Omega + 2\mu\left(\boldsymbol{\varepsilon}(\boldsymbol{u}),\boldsymbol{\varepsilon}(\boldsymbol{v})\right)_\Omega +\lambda\left(\nabla\cdot\boldsymbol{u},\nabla\cdot\boldsymbol{v}\right)_\Omega - \sum_{k \in J} & \left(\alpha_k p_k,\nabla\cdot\boldsymbol{v} \right)_\Omega = \\
     = & (\boldsymbol{f},\boldsymbol{v})_\Omega + 
     \left(\boldsymbol{h}_{\boldsymbol{u}},\boldsymbol{v}\right)_{\Gamma_N}
     \qquad\forall\boldsymbol{v}\in V.
\end{split}
\end{equation}
Substituting the definitions of the bilinear forms into the equation \eqref{eq:weakmom} we obtain:
\begin{equation}
\label{eq:abmom}
     \rho\left(\dfrac{\partial^2\boldsymbol{u}}{\partial t^2},\boldsymbol{v}\right)_\Omega + a(\boldsymbol{u},\boldsymbol{v}) - \sum_{k \in J} b_k(p_k,\boldsymbol{v}) = F(\boldsymbol{v})
     \qquad\forall\boldsymbol{v}\in V.
\end{equation}
\par
The other conservation equations in Equation \eqref{eq:fullprob} can be derived following the same procedure. For $j\in J$, we multiply by a test function $q_j\in Q_j$ and we integrate over the domain $\Omega$:
\begin{equation}
    c_j\int_{\Omega}\dfrac{\partial p_j}{\partial t} q_j +
    \int_{\Omega}\nabla\cdot\left(\alpha_j\dfrac{\partial \boldsymbol{u}}{\partial t} -
    \dfrac{\boldsymbol{\mathrm{K}}_j}{\mu_j}\nabla p_j\right) %-     \rho_j\dfrac{\boldsymbol{\mathrm{K}}_j}{\mu_j}\dfrac{\partial^2\boldsymbol{u}}{\partial t^2}\right)q_j 
    + \sum_{k \in J} \int_{\Omega}\beta_{jk}(p_j-p_k)q_j + \int_{\Omega} + \beta^\mathrm{e}_j p_j q_j=\int_{\Omega} g_j q_j \qquad \forall q_j\in Q_j.
\end{equation}
We treat each component separately:
\begin{equation*}
\begin{split}
    \int_{\Omega}c_j\dfrac{\partial p_j}{\partial t} q_j =\, &
    \left(c_j\dfrac{\partial p_j}{\partial t},q_j\right)_{\Omega},
    \\[12pt]
    \int_{\Omega}\nabla\cdot\left(\alpha_j\dfrac{\partial \boldsymbol{u}}{\partial t}\right)q_j=\, & \alpha_j\left(\nabla\cdot\left(\dfrac{\partial \boldsymbol{u}}{\partial t}\right),q_j\right)_{\Omega},
    \\[12pt]
    \int_{\Omega}\nabla\cdot\left(
    \dfrac{\boldsymbol{\mathrm{K}}_j}{\mu_j}\nabla p_j\right)q_j = & - \int_{\Omega}    \dfrac{\boldsymbol{\mathrm{K}}_j}{\mu_j}\nabla p_j \cdot \nabla q_j  + \int_{\partial\Omega}    \left(\dfrac{\boldsymbol{\mathrm{K}}_j}{\mu_j}\nabla p_j\right) \cdot q_j\boldsymbol{n}\; \mathrm{d}\sigma = \\
    = & - \left(\dfrac{\boldsymbol{\mathrm{K}}_j}{\mu_j}\nabla p_j ,\nabla q_j\right)_\Omega  +  \left(\dfrac{\boldsymbol{\mathrm{K}}_j}{\mu_j}\nabla p_j \cdot \boldsymbol{n},q_j\right)_{\Gamma_N^j} = \\
    = & - \dfrac{\boldsymbol{\mathrm{K}}_j}{\mu_j}(\nabla p_j ,\nabla q_j)_\Omega  +(h_j,q_j)_{\Gamma_N^j},
    \\[12pt]
    %\int_{\Omega}\nabla\cdot\left(
    %\rho_j\dfrac{\boldsymbol{\mathrm{K}}_j}{\mu_j}\dfrac{\partial^2\boldsymbol{u}}{\partial t^2}\right)q_j =
    %& 
    %\left(\nabla\cdot\left(\rho_j\dfrac{\boldsymbol{\mathrm{K}}_j}{\mu_j}
    %\dfrac{\partial^2\boldsymbol{u}}{\partial t^2}\right),q_j\right)_\Omega, \\[12 pt]
    \int_{\Omega}\beta_{jk}(p_j-p_k)q_j =\,& 
    (\beta_{jk}p_j,q_j)_\Omega - (\beta_{jk}p_k,q_j)_\Omega,
    \\[12 pt]
    \int_{\Omega}\beta_{j}^\mathrm{e} p_j q_j =\,& 
    (\beta_{j}^\mathrm{e} p_j,q_j)_\Omega,
    \\[12 pt]
    \int_{\Omega} g_j q_j = & (g_j,q_j)_\Omega.
\end{split}
\end{equation*}
We sum up the terms and we obtain the following equation:
\begin{equation}
\label{eq:weakpress}
\begin{split}
    c_j\left(\dfrac{\partial p_j}{\partial t},q_j\right)_{\Omega} + & \alpha_j\left(\nabla\cdot\left(\dfrac{\partial \boldsymbol{u}}{\partial t}\right),q_j\right)_{\Omega} + \dfrac{\boldsymbol{\mathrm{K}}_j}{\mu_j}(\nabla p_j ,\nabla q_j)_\Omega + \sum_{k \in J}(\beta_{jk}p_j,q_j)_\Omega -  
    %\left(\nabla\cdot\left(\rho_j\dfrac{\boldsymbol{\mathrm{K}}_j}{\mu_j}
    %\dfrac{\partial^2\boldsymbol{u}}{\partial t^2}\right),q_j\right)_\Omega + 
    \\ - &  \sum_{k \in J}(\beta_{jk}p_k,q_j)_\Omega + (\beta_{j}^\mathrm{e} p_j,q_j)_\Omega = (g_j,q_j)_\Omega + (h_j,q_j)_{\Gamma_N^j}\qquad \forall q_j\in Q_j \quad \forall j\in J.
\end{split}
\end{equation}
Introducing now the definitions of the bilinear forms, we can rewrite problem \eqref{eq:weakpress} in an abstract form:
\begin{equation}
\begin{split}
    c_j\left(\dfrac{\partial p_j}{\partial t},q_j\right)_{\Omega} + b_j\left(q_j,
    \dfrac{\partial \boldsymbol{u}}{\partial t}\right)
    %+d_j\left(q_j,    \dfrac{\partial^2\boldsymbol{u}}{\partial t^2}\right) 
    + s_j(p_j,q_j) +   C_j\left((p_k)_{k\in J},q_j\right) = G_j(q_j) \quad \forall q_j\in Q_j\quad j\in J.
\end{split}
\end{equation}
Finally, the weak formulation of problem \eqref{eq:fullprob} reads:
\par
\bigskip
Find $\boldsymbol{u}(t)\in V$ and $q_j(t) \in Q_j$ with $j\in J$ such that $\forall t>0$:
\begin{equation}
\begin{dcases}
     \rho\left(\dfrac{\partial^2\boldsymbol{u}(t)}{\partial t^2},\boldsymbol{v}\right)_\Omega + a(\boldsymbol{u}(t),\boldsymbol{v}) - \sum_{k \in J} b_k(p_k(t),\boldsymbol{v}) = F(\boldsymbol{v}),
     & \forall\boldsymbol{v}\in V, \\[8pt]
     c_j\left(\dfrac{\partial p_j}{\partial t},q_j\right)_{\Omega} + b_j\left(q_j,
    \dfrac{\partial \boldsymbol{u}}{\partial t}\right)
    %+d_j\left(q_j,    \dfrac{\partial^2\boldsymbol{u}}{\partial t^2}\right) 
    + s_j(p_j,q_j) +   C_j\left((p_k)_{k\in J},q_j\right) = G_j(q_j), & \forall q_j\in Q_j\quad j\in J, \\[8pt]
    \boldsymbol{u}(0)=\boldsymbol{u}_0, & \mathrm{in}\;\Omega, \\[8pt]
    \dfrac{\partial\boldsymbol{u}}{\partial t}(0)=\boldsymbol{v}_0, & \mathrm{in}\;\Omega, \\[8pt]
    p_j(0)=p_{j0}, & \mathrm{in}\;\Omega\quad j\in J,\\[8pt]
    \boldsymbol{u}(t) = \boldsymbol{u}_\mathrm{D}(t), & \mathrm{on}\;\Gamma_D,\\[8pt]
   q_j(t)=q^\mathrm{D}_j(t), & \mathrm{on}\;\Gamma_D^j \quad j\in J.
\end{dcases}
\end{equation}
\section{Derivation of PolyDG semi-discrete formulation}
\label{sec:PolyDGder}
In this section, we derive the PolyDG formulation of the Equation \eqref{eq:weakform}. First, we rewrite the momentum equation in a Discontinuous Galerkin (DG) framework. We proceed as usual to obtain:
\begin{equation*}
\begin{split}
   \int_{\Omega}\nabla\cdot\boldsymbol{\sigma}_\mathrm{E}(\boldsymbol{u}_h)\cdot\boldsymbol{v}_h = & 
   \sum_{K\in\partition} \int_{K}\nabla\cdot\boldsymbol{\sigma}_\mathrm{E}(\boldsymbol{u}_h)\cdot\boldsymbol{v}_h \\
   = & - \sum_{K\in\partition} \int_{K}\boldsymbol{\sigma}_\mathrm{E}(\boldsymbol{u}_h):\nabla\boldsymbol{v}_h + \sum_{K\in\partition} \int_{\partial K}(\boldsymbol{\sigma}_\mathrm{E}(\boldsymbol{u}_h)\cdot\boldsymbol{n})\cdot\boldsymbol{v}_h\mathrm{d}\sigma = \\
   = & - \int_{\Omega}\boldsymbol{\sigma}_\mathrm{E}(\boldsymbol{u}_h):\nabla_h\boldsymbol{v}_h +\sum_{F\in\facesinternal\cup\facesD}\int_{F}\averagel\boldsymbol{\sigma}_\mathrm{E}(\boldsymbol{u}_h)\averager : \jjumpl\boldsymbol{v}_h\jjumpr\mathrm{d}\sigma +  \sum_{F\in\facesinternal\cup\facesD}\int_{F}  \jjumpl\boldsymbol{u}_h\jjumpr : \averagel\boldsymbol{\sigma}_\mathrm{E}(\boldsymbol{v}_h)\averager\mathrm{d}\sigma + \\ 
   & -\sum_{F\in\facesinternal\cup\facesD} \int_{F}\eta \jjumpl\boldsymbol{u}_h\jjumpr : \jjumpl\boldsymbol{v}_h\jjumpr\mathrm{d}\sigma + \int_{\Gamma_N}(\boldsymbol{\sigma}_\mathrm{E}(\boldsymbol{u}_h)\cdot\boldsymbol{n})\cdot\boldsymbol{v}_h\mathrm{d}\sigma \qquad \forall \boldsymbol{v}_h\in\Vh %\\
   %= & - \int_{\Omega}\boldsymbol{\sigma}_\mathrm{E}(\boldsymbol{u}_h):\nabla_h\boldsymbol{v}_h +
   %\int_{\Omega} \mathcal{R}(\jjumpl\boldsymbol{v}_h\jjumpr): \averagel\boldsymbol{\sigma}_\mathrm{E}(\boldsymbol{u}_h)\averager +  \int_{\Omega} \mathcal{R}(\jjumpl\boldsymbol{u}_h\jjumpr): \averagel\boldsymbol{\sigma}_\mathrm{E}(\boldsymbol{v}_h)\averager + \\ 
  % & - \sum_{F\in\facesinternal\cup\facesD} \int_{F}\eta \jjumpl\boldsymbol{u}_h\jjumpr : \jjumpl\boldsymbol{v}_h\jjumpr\mathrm{d}\sigma + \int_{\Gamma_N}(\boldsymbol{\sigma}_\mathrm{E}(\boldsymbol{u}_h)\cdot\boldsymbol{n})\cdot\boldsymbol{v}_h\mathrm{d}\sigma \qquad \forall \boldsymbol{v}_h\in\Vh. \\
\end{split}
\end{equation*}
%For syntax simplicity, we introduced the lifting operator $\mathcal{R}(\cdot):L^2(\facesinternal\cup\facesD)\rightarrow\Wh$ of the traces of symmetric tensors in $\mathbb{R}^{d\times d}$, as follows:
%\begin{equation}
    %\int_{\Omega} \mathcal{R}(\jjumpl\boldsymbol{w}\jjumpr):\boldsymbol{\sigma} = \sum_{F\in\facesinternal\cup\facesD}\int_{F}  \jjumpl\boldsymbol{w}\jjumpr : \averagel\boldsymbol{\sigma}\averager\mathrm{d}\sigma \qquad\forall \boldsymbol{\sigma}\in\Wh.
%\end{equation}
%In analogy to what we did before, we introduce a lifting operator $\mathcal{R}_j(\cdot):L^2(\facesinternal\cup\facesDj)\rightarrow\Wh$ of the traces of symmetric tensors in $\mathbb{R}^{d\times d}$ associated to the $j$-th fluid network, defined as:
%\begin{equation}
%    \int_{\Omega} \mathcal{R}_j(\jjumpl\boldsymbol{w}\jjumpr):\boldsymbol{\sigma} = \sum_{F\in\facesinternal\cup\facesDj}\int_{F}  \jjumpl\boldsymbol{w}\jjumpr : \averagel\boldsymbol{\sigma}\averager\mathrm{d}\sigma \qquad\forall \boldsymbol{\sigma}\in\Wh.
%\end{equation}
Moreover, we have to treat also the pressure component of the momentum for each fluid network $j\in J$:
\begin{equation*}
\begin{split}
   \int_{\Omega}\alpha_j\nabla p_{jh}\cdot\boldsymbol{v}_h = & \int_{\Omega}\alpha_j\nabla\cdot(p_{jh} \mathrm{\mathbf{I}})\cdot\boldsymbol{v}_h = \\ = &
    - \sum_{K\in\partition}\int_{K}\alpha_j p_{jh}\mathrm{\mathbf{I}}:\nabla\boldsymbol{v}_h
    + \sum_{K\in\partition}\int_{\partial K}\alpha_j (p_{jh}\mathrm{\mathbf{I}}\cdot\boldsymbol{n})\cdot\boldsymbol{v}_h\mathrm{d}\sigma =  \\
    = &
    - \int_{\Omega}\alpha_j p_{jh}\mathrm{\mathbf{I}}:\nabla_h\boldsymbol{v}_h
    + \sum_{F\in\facesinternal\cup\facesDj}\int_{F}\alpha_j \averagel p_{jh}\mathrm{\mathbf{I}}\averager:\jjumpl\boldsymbol{v}_h\jjumpr\mathrm{d}\sigma + \int_{\Gamma_N}\alpha_j (p_{jh}\mathrm{\mathbf{I}}\cdot\boldsymbol{n})\cdot\boldsymbol{v}_h\mathrm{d}\sigma = \\ 
    = &
    - \int_{\Omega}\alpha_j p_{jh}(\nabla_h\cdot\boldsymbol{v}_h)
    + \sum_{F\in\facesinternal\cup\facesDj}\int_{F}\alpha_j \averagel p_{jh}\mathrm{\mathbf{I}}\averager:\jjumpl\boldsymbol{v}_h\jjumpr\mathrm{d}\sigma + \int_{\Gamma_N}\alpha_j (p_{jh}\mathrm{\mathbf{I}}\cdot\boldsymbol{n})\cdot\boldsymbol{v}_h\mathrm{d}\sigma  \qquad\forall \boldsymbol{v}_h\in\Vh \\ 
    %= &
    %- \int_{\Omega}\alpha_j p_{jh}(\nabla_h\cdot\boldsymbol{v}_h)
    %+ \alpha_j \int_{\Omega} \mathcal{R}_j(\jjumpl\boldsymbol{v}_h\jjumpr): \averagel p_{jh}\mathrm{\mathbf{I}}\averager + \int_{\Gamma_N}\alpha_j (p_{jh}\mathrm{\mathbf{I}}\cdot\boldsymbol{n})\cdot\boldsymbol{v}_h\mathrm{d}\sigma \qquad\forall \boldsymbol{v}_h\in\Vh.
\end{split}
\end{equation*}
By exploiting the definitions of the bilinear forms, we arrive at the PolyDG semi-discrete formulation of the momentum equation for any $t\in(0,T]$:
\begin{equation}
\rho\left(\ddot{\boldsymbol{u}}_h(t),\boldsymbol{v}_h\right)_\Omega + \mathcal{A}_\mathrm{E}(\boldsymbol{u}_h(t),\boldsymbol{v}_h) - \sum_{k \in J}  \mathcal{B}_k(p_{kh}(t),\boldsymbol{v}_h) = F(\boldsymbol{v}_h)
     \qquad \forall\boldsymbol{v_h}\in \Vh.
\end{equation}
\par
Following similar arguments, we can rewrite the continuity equations for the fluid networks, from the Darcy flux component:
\begin{equation*}
    \begin{split}
    - \int_{\Omega} \nabla\cdot\left(\dfrac{\boldsymbol{\mathrm{K}}_j}{\mu_j}\nabla p_{jh}\right)q_{jh} = &
     - \sum_{K\in\partition} \int_{K} \nabla\cdot\left(\dfrac{\boldsymbol{\mathrm{K}}_j}{\mu_j}\nabla p_{jh}\right)q_{jh} = \\
    = & 
    \sum_{K\in\partition} \int_{K} \dfrac{\boldsymbol{\mathrm{K}}_j}{\mu_j}\nabla p_{jh}\cdot\nabla q_{jh} - \sum_{K\in\partition} \int_{\partial K} \left(\dfrac{\boldsymbol{\mathrm{K}}_j}{\mu_j}\nabla p_{jh}\cdot \boldsymbol{n}\right) q_{jh}= \\
    = & \int_{\Omega} \dfrac{\boldsymbol{\mathrm{K}}_j}{\mu_j}\nabla_h p_{jh}\cdot\nabla_h q_{jh} - \int_{\Gamma_N^j} h_j q_{jh} - \sum_{F\in\facesinternal\cup\facesDj} \int_{F} \dfrac{1}{\mu_j}\averagel\boldsymbol{\mathrm{K}}_j\nabla_h p_{jh}\averager\cdot\jumpl q_{jh}\jumpr  + \\
    & - \sum_{F\in\facesinternal\cup\facesDj} \int_{F} \dfrac{1}{\mu_j}\averagel\boldsymbol{\mathrm{K}}_j\nabla_h q_{jh}\averager\cdot\jumpl p_{jh}\jumpr  + \sum_{F\in\facesinternal\cup\facesDj} \int_{F} \zeta_j \jumpl p_{jh}\jumpr\cdot\jumpl q_{jh}\jumpr \qquad q_{jh}\in\Qh,
    \end{split}
\end{equation*}
\par
%We define also the form:
%\begin{equation}
%    \mathcal{C}(p,r,q) = (p-r,q)_\Omega
%\end{equation}
The continuity equation reads as for any $t\in(0,T]$:
\begin{equation}
c_j\left(\dot{p}_{jh}(t),q_{jh}\right)_{\Omega} + \mathcal{B}_j\left(q_{jh},
    \dot{\boldsymbol{u}}_h(t)\right) + %d_j\left(q_j, \ddot{\boldsymbol{u}}_h(t) \right) +
    \mathcal{A}_{\mathrm{P}_j}(p_{jh}(t),q_{jh}) +  C_j\left((p_{kh})_{k\in J},q_{jh}\right)  = G_j(q_{jh}) \qquad \forall q_{jh}\in\Qh.   
\end{equation}
\par
To conclude the PolyDG semi-discrete formulation of the MPET problem reads as:
\par
\bigskip
Find $\boldsymbol{u}_h(t)\in \Vh$ and $p_{jh}(t) \in \Qh$ with $j\in J$ such that $\forall t>0$:
\begin{equation}
\begin{dcases}
     \rho\left(\ddot{\boldsymbol{u}}_h(t),\boldsymbol{v}_h\right)_\Omega + \mathcal{A}_\mathrm{E}(\boldsymbol{u}_h(t),\boldsymbol{v}_h) - \sum_{k \in J} \mathcal{B}_k(p_{kh}(t),\boldsymbol{v}_h) = F(\boldsymbol{v}_h),
     & \forall\boldsymbol{v_h}\in \Vh \\[8pt]
    c_j\left(\dot{p}_{jh}(t),q_{jh}\right)_{\Omega} + \mathcal{B}_j\left(q_{jh},
    \dot{\boldsymbol{u}}_h(t)\right) + %d_j\left(q_j, \ddot{\boldsymbol{u}}_h(t) \right) +
    \mathcal{A}_{\mathrm{P}_j}(p_{jh}(t),q_{jh}) +  C_j\left((p_{kh})_{k\in J},q_{jh}\right)  = G_j(q_{jh}), & \forall q_{jh}\in\Qh  \\[8pt]
    \boldsymbol{u}_h(0)=\boldsymbol{u}_{0h}, & \mathrm{in}\;\Omega_h \\[8pt]
    \dot{\boldsymbol{u}}_h(0)=\boldsymbol{v}_{0h}, & \mathrm{in}\;\Omega_h \\[8pt]
    p_{jh}(0)=p_{j0h}, & \mathrm{in}\;\Omega_h\\[8pt]
    \boldsymbol{u}_h(t) = \boldsymbol{u}^\mathrm{D}_h(t), & \mathrm{on}\;\Gamma_D\\[8pt]
   q_{jh}(t)=q^\mathrm{D}_{jh}(t), & \mathrm{on}\;\Gamma_D^j
\end{dcases}
\end{equation}

\bibliographystyle{ieeetr}
\bibliography{sample.bib}

\end{document}